\numberwithin{equation}{section}
\newcommand{\eps}{\varepsilon}
\newtheorem{theorem}{Theorem}[section]
\newtheorem{lemma}{Lemma}[section]
\newtheorem{remark}{Remark}[section]
\newtheorem{proposition}{Proposition}[section]
\newtheorem{corollary}{Corollary}[section]
\title{Gelfand-type problem for turbulent jets.
}
\author {Peter  V. Gordon
\thanks{Department of Mathematical Sciences, 
Kent State University,
 Kent, OH 44242, USA. E-mail: {\tt gordon@math.kent.edu}}
\and Vitaly Moroz
\thanks{
Department of Mathematics, 
Computational Foundry,
Swansea University,
Fabian Way, Swansea SA1 8EN,
Wales, UK. E-mail: {\tt v.moroz@swansea.ac.uk}}
\and Fedor Nazarov
\thanks{Department of Mathematical Sciences, 
Kent State University,
Kent, OH 44242, USA. E-mail: {\tt nazarov@math.kent.edu}}
}
\begin{document}
\maketitle

\begin{abstract}
We consider the model of auto-ignition (thermal explosion) of a free round reactive turbulent jet introduced in \cite{GHH18}. This model falls into the general class of Gelfand-type problems and constitutes a
boundary value problem for a certain semi-linear elliptic equation that depends on
two parameters: $\alpha$ characterizing the flow rate and $\lambda$  (Frank-Kamentskii parameter) characterizing the strength of the reaction.  Similarly to the classical Gelfand problem, this equation admits a solution when the Frank-Kametskii parameter $\lambda$ does not
exceed some critical value $\lambda^*(\alpha)$ and admits no solutions for larger values of $\lambda$. We obtain the sharp asymptotic behavior of the critical Frank-Kamenetskii parameter in the 
strong flow limit ($\alpha\gg1$). We also provide a detailed description of the extremal solution (i.e., the solution corresponding to $\lambda^*$) in this regime.
\end{abstract}

Keywords:  Gelfand problem, strong  advection, extremal solutions, stable solutions, thermal explosion.

MSC 2010: 35J25, 35J60,  35B40, 35B30, 35B09, 35A1, 80A25

\section{Introduction}

In this paper we are concerned with the existence and quantitative properties of solutions of  the following problem:
\begin{eqnarray}\label{eq:1}
\left\{
\begin{array}{lll}
-\Delta u-\alpha r \varphi(r) \frac{\partial }{\partial r} u=\lambda \psi(r) f(u) & \mbox{in} & B, \\
 u>0 &\mbox{in} & B, \\
 u=0 & \mbox{on} & \partial B,
\end{array}
\right.
\end{eqnarray}
where $B$ is the unit disk in $\mathbb{R}^2$ centered at the origin,  $\lambda>0, ~\alpha > 0$ are parameters,
$f: [0,\infty) \to (0,\infty)$ is a $C^1$ convex non-decreasing  function satisfying:
\begin{eqnarray} \label{eq:2}
\int_0^{\infty}\frac{ds}{f(s)}<\infty,
\end{eqnarray}
$\varphi(r),\psi(r)$ are  non-negative, non-increasing Lipshitz continuous functions on $[0,1]$
satisfying $\varphi(0)=\psi(0)=1$.  Moreover, we assume that $\varphi(r)$ is positive in $[0,1).$ 
In addition we  assume that
\begin{eqnarray}\label{eq:3}
\int_0^1 M(s) ds <\infty,
\end{eqnarray}
where 
\begin{eqnarray}\label{eq:4}
M(s):=\max_{r\in [0,s]} \frac{\psi(r)}{\varphi(r)}.
\end{eqnarray}

Equation \eqref{eq:1} was recently introduced in \cite{GHH18} as a model of the autoignition event of  free round reactive turbulent jets.
In the context of this model, $u$,  $f$,  $\varphi$, and $\psi$ 
are respectively the (appropriately normalized) temperature, reaction rate, flow velocity profile, and product of concentrations
of the oxidizing and reactive components over cross-sections of the jet, while
$\lambda$ is the Frank-Kamentskii parameter representing the ratio of the heat release of
the reaction and the thermal conductivity  and $\alpha$ is the ratio of the injection velocity
and the thermal conductivity.

The typical examples considered in the physical literature (e.g. \cite{Abram,Shl}) are
$f(u)=\exp(u)$, $\varphi(r)=\exp(-4r^2)$ or $\varphi(r)=(1-r^{3/2})^2$, and
$\psi=\varphi^{2{\rm Sc}},$ where ${\rm Sc}$ is a Sclihting number which for round turbulent jets is 
${\rm Sc}\approx 0.75$, in which cases our assumptions \eqref{eq:2} and \eqref{eq:3}  are easily seen to be satisfied.

Problem \eqref{eq:1} falls into the general class of Gelfand-type problems. 
The classical Glefand problem can be obtained from  problem \eqref{eq:1}
by removing the advection term, making the right hand side of the equation independent of
$r,$ that is,  by setting $\alpha=0$ and $\psi=1,$ and replacing the unit disk $B$ in $\mathbb{R}^2$  by a general bounded
domain
in $\mathbb{R}^n.$  That problem was introduced in  1938 by Frank-Kamenetskii as a model of thermal explosion in a combustion  vessel with ideally thermally conducting walls
(see \cite{FK,Sem,ZBLM} for more details), but became known
in the mathematical community due to the chapter written by Barenblatt in a famous review of Gelfand
\cite{Gelfand}. The general properties of solutions of the classical Gelfand problem were studied quite extensively in both mathematical and physical literature, see book \cite{stable} for a review of results.
The problem considered in this paper inherits many nice features of the classical Gelfand
problem.  The following proposition summarizes the properties of solutions of problem \eqref{eq:1} relevant to the present work.

\begin{proposition} \label{p:1}

For fixed $\alpha>0,$ there exists an extremal value
of the Frank-Kamenetskii parameter $\lambda^*=\lambda^*(\alpha)\in (0,\infty)$ such that:

\medskip

\noindent i) Problem \eqref{eq:1} admits a unique minimal (i.e. smallest) classical positive solution 
$u_{\lambda,\alpha}$ for $\lambda\in(0,\lambda^*);$  

\medskip

\noindent ii) Problem \eqref{eq:1} admits a unique extremal solution $u^*_{\alpha}$ defined as
\begin{eqnarray}\label{eq:weaks}
u^*_{\alpha}(x):=\lim_{\lambda\to\lambda^*} u_{\lambda,\alpha}(x),
\end{eqnarray}
which is also classical.

\medskip

\noindent iii) Minimal solutions of \eqref{eq:1} for $\lambda\in(0,\lambda^*]$ are radially symmetric, strictly decreasing and satisfy the semi-stability condition
\begin{eqnarray}\label{eq:semistab}
\int_{B} |\nabla \eta|^2 \mu dx\ge\lambda \int_{B} \psi f'(u_{\lambda,\alpha} ) \eta^2 \mu dx,\quad \forall \eta \in
H^1_0(B),
\end{eqnarray}
 where 
 \begin{eqnarray}\label{eq:app1a}
\mu(r)=\exp\left(\alpha\int_0^r s\varphi(s)ds\right);
\end{eqnarray}

\medskip\noindent 
iv) There are no solutions for \eqref{eq:1} when  $\lambda>\lambda^*$.
\end{proposition}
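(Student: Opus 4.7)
\medskip
\noindent\textbf{Proof plan.}
My first move is to put the equation into a symmetric divergence form by multiplying through by the radial weight $\mu(r)=\exp\bigl(\alpha\int_0^r s\varphi(s)\,ds\bigr)$ from \eqref{eq:app1a}. Since $\mu'(r)=\alpha r\varphi(r)\mu(r)$, the equation becomes
\begin{equation*}
-\operatorname{div}(\mu\nabla u)=\lambda\,\mu(r)\psi(r)f(u)\quad\text{in }B,\qquad u=0\text{ on }\partial B.
\end{equation*}
Because $\varphi$ is bounded on $[0,1]$, the weight $\mu$ is bounded and bounded away from zero on $\overline B$, so the associated weighted Sobolev space is equivalent to $H^1_0(B)$, and we have a uniformly elliptic, self-adjoint operator with a smooth drift absorbed into the weight. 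After this reformulation the proposition reduces to the standard Crandall–Rabinowitz/Brezis et al.\ theory of Gelfand-type problems (as in \cite{stable}), and I would proceed in the following steps.

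Step 1: \emph{Existence for small $\lambda$ and definition of $\lambda^*$}. I would solve the linear weighted problem $-\operatorname{div}(\mu\nabla w)=\mu\psi$ with zero boundary data, obtaining a bounded positive $w$; for $\lambda$ small enough, $Kw$ with suitable $K$ yields a classical supersolution, while $0$ is a subsolution. Monotone iteration from $0$ then produces a classical solution that is, by construction, the smallest one. Setting $\lambda^*:=\sup\{\lambda>0 : \eqref{eq:1} \text{ has a classical solution}\}$, the same iteration gives the minimal classical solution $u_{\lambda,\alpha}$ for every $\lambda\in(0,\lambda^*)$, and standard comparison using convexity of $f$ shows $u_{\lambda,\alpha}$ is uniquely characterized by minimality. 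Monotonicity in $\lambda$ is automatic. Step 4 (non-existence for $\lambda>\lambda^*$) then follows from the definition: any such solution would be a supersolution for every smaller $\lambda'$, forcing the iteration to converge and contradicting the definition of $\lambda^*$.

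Step 2: \emph{Radiality, monotonicity, semi-stability}. All coefficients depend only on $r$, so if $u_0\equiv 0$ we can carry out the monotone iteration inside the class of radially symmetric, radially non-increasing $C^2$ functions; the weighted maximum principle applied to $\partial_r u_k$ at each iteration step preserves radial monotonicity, and radial symmetry of the fixed point follows from uniqueness of the minimal solution. For semi-stability, I use that $u_{\lambda,\alpha}$ is the monotone limit of iterates, which by a standard argument forces the first eigenvalue of the linearization $-\operatorname{div}(\mu\nabla\cdot)-\lambda\mu\psi f'(u_{\lambda,\alpha})\cdot$ in the weighted space $L^2(\mu\,dx)$ to be non-negative; rewriting the associated Rayleigh quotient gives exactly \eqref{eq:semistab}.

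Step 3: \emph{Existence and regularity of the extremal solution}. Monotonicity in $\lambda$ ensures that $u^*_\alpha(x):=\lim_{\lambda\uparrow\lambda^*}u_{\lambda,\alpha}(x)$ exists pointwise. The main difficulty, and the step I expect to be hardest, is to show that $u^*_\alpha$ is bounded (hence classical, by elliptic regularity together with \eqref{eq:3}). My approach would be to combine the semi-stability condition \eqref{eq:semistab} with radial test functions adapted to the weighted problem to derive an $L^\infty$ bound on $u_{\lambda,\alpha}$ uniform in $\lambda\in(0,\lambda^*)$; Jensen's inequality and convexity of $f$, together with \eqref{eq:2} (needed to rule out blow-up of $f(u)$), and the integrability assumption \eqref{eq:3} (which controls $\psi/\varphi$ and hence the effective inhomogeneity after conjugating out $\mu$), would be invoked to close the estimate. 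Once $u^*_\alpha\in L^\infty$, weighted elliptic regularity upgrades it to a classical solution, and the uniqueness of a classical solution at $\lambda=\lambda^*$ follows from the strict convexity/monotone limit construction, completing the proof.
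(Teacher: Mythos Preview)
Your overall architecture---rewrite in divergence form with the weight $\mu$, build a minimal solution by monotone iteration from $0$, define $\lambda^*$ as the supremum, extract radiality and semi-stability from the iteration, then pass to the limit $\lambda\uparrow\lambda^*$---matches the paper's structure (Lemmas~2.1--2.5) closely. Two points deserve comment.

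\textbf{A small gap.} You never argue that $\lambda^*<\infty$. Your Step~4 shows non-existence for $\lambda>\lambda^*$ by definition, but the proposition asserts $\lambda^*\in(0,\infty)$. The paper handles this (Lemma~2.2) by observing that convexity and $f(0)>0$ force $f(s)\ge\eps s$ for some $\eps>0$, then testing against the principal eigenfunction of the weighted eigenvalue problem $-\nabla\cdot(\mu\nabla\xi)=\kappa\psi\mu\xi$ to conclude $\lambda^*\le\kappa_1/\eps$.

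\textbf{Step 3 diverges from the paper and is too vague.} For the boundedness of $u^*_\alpha$ the paper does \emph{not} use Jensen's inequality, nor assumption~\eqref{eq:2}, nor the integrability condition~\eqref{eq:3}; those hypotheses enter only in the later asymptotic analysis in $\alpha$, not in Proposition~1.1. Instead the paper follows Nedev's argument: plug $\eta=\tilde f(u_\lambda):=f(u_\lambda)-f(0)$ into the semi-stability inequality~\eqref{eq:semistab}, and separately multiply the equation by $g(u_\lambda):=\int_0^{u_\lambda}(f'(s))^2\,ds$ and integrate by parts. Combining the two gives
\[
\int_B f'(u_\lambda)\bigl(\tilde f(u_\lambda)\bigr)^2\psi\mu\,dx \le \int_B f(u_\lambda)\,g(u_\lambda)\,\psi\mu\,dx,
\]
which is exactly Nedev's inequality (up to the weight $\mu$); his two-dimensional argument then yields an $L^\infty$ bound uniform in $\lambda$. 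Your plan of invoking \eqref{eq:2} and \eqref{eq:3} here has no obvious mechanism---those conditions control the growth of $1/f$ and the ratio $\psi/\varphi$, neither of which is what limits $u^*_\alpha$ at fixed $\alpha$---so as written Step~3 is not a route to the estimate.
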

The proposition above is quite standard. We will present a sketch of the proof in the next section for completeness.

 In the context of the autoignition problem, the extremal value $\lambda^*(\alpha)$ and  the extremal solution $u^*_{\alpha}$ play a very special  role. Indeed, in the context of the theory developed
 in \cite{GHH18}, as any theory based on Frank-Kamenetskii approach, the existence of a solution for \eqref{eq:1} indicates autoignition failure. From physical
 standpoint, that means that the reactive component undergoes partial oxidation, which
 results in establishing a self-similar temperature profile given by the minimal solution
 of \eqref{eq:1}. In contrast, the absence of a solution for \eqref{eq:1} indicates successful autoignition.
 Therefore, $\lambda^*(\alpha)$ determines the boundary between the successful autoignition and
 the absence thereof.  The extremal value of the Frank Kamentskii parameter $\lambda^*(\alpha)$
 indicates the maximal reaction intensity for a given flow rate for which auto-ignition does not take
 place. 
  The extremal solution determines the maximal possible self-similar
 profile.
 
 In practical applications $\alpha\gg 1$ and hence one needs to understand the behavior
 of $\lambda^*(\alpha)$ for large $\alpha$.
 This observation raises the question of asymptotic behavior of $\lambda^*(\alpha)$ as $\alpha\to\infty$.
 Lower and upper bounds on $\lambda^*(\alpha)$ as $\alpha\to\infty$ were derived in \cite[Theorem 3.1]{GHH18} in the special case $f(u)=\exp(u).$
 In this paper we establish a sharp asymptotic of $\lambda^*(\alpha)$ and give quite precise 
 description of an extremal solution in this limit for a general class of nonlinearities $f(u)$
 and functions $\varphi, \psi$  under  very mild  regularity assumptions. Our main results are given by the following 
 theorems. 
 
 The first theorem gives sharp asymptotic for $\lambda^*(\alpha)$ for large values of $\alpha$:

\begin{theorem}  \label{t:1}
Let
\begin{eqnarray} \label{eq:9}
 K:=\int_{0}^{\infty} \frac{ds}{f(s)}.
\end{eqnarray}
Then,
\begin{eqnarray}\label{eq:a1}
\lim_{\alpha\to\infty} \lambda^*(\alpha) \left( \frac{2K \alpha} {\log \alpha} \right)^{-1}=1.
\end{eqnarray}
\end{theorem}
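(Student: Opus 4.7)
The plan is to sandwich $\lambda^*(\alpha)$ between matching asymptotic upper and lower bounds. The organizing tool is the Keller--Osserman type substitution $v := F(u)$ with $F(t) := \int_0^t ds/f(s)$, which maps $[0,\infty)$ onto $[0,K)$; a direct radial computation converts \eqref{eq:1} into the equivalent problem
\begin{equation*}
-\Delta v - \alpha r\varphi\, v_r \;=\; \lambda\psi + f'(u)\, v_r^2, \qquad v\in[0,K),\ v|_{\partial B}=0.
\end{equation*}

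\emph{Upper bound.} Dropping the non-negative quadratic term, $v$ is a super-solution of the linear drift problem $-\Delta V -\alpha r\varphi V_r = \lambda\psi$, $V|_{\partial B}=0$, which is explicitly solved via the integrating factor $\mu$: one finds $V(0) = \lambda I(\alpha)$ with
\begin{equation*}
I(\alpha) := \int_0^1 s\psi(s)\mu(s)\Bigl(\int_s^1 \tfrac{dr}{r\mu(r)}\Bigr)ds.
\end{equation*}
Comparison gives $\lambda I(\alpha) \leq v(0) < K$, hence $\lambda^*(\alpha) \leq K/I(\alpha)$. A Laplace-type analysis of $I(\alpha)$---splitting the $s$-integration at the scale $s\sim 1/\sqrt\alpha$, integrating by parts inside $\int_0^s t\psi\mu\,dt$ using the identity $t\varphi\mu\,dt = (1/\alpha)\,d\mu$, and exploiting $\psi(0)=\varphi(0)=1$ together with \eqref{eq:3}---yields $I(\alpha) = (1+o(1))\log\alpha/(2\alpha)$, whence $\lambda^*(\alpha) \leq (1+o(1))\cdot 2K\alpha/\log\alpha$.

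\emph{Lower bound.} Given $\varepsilon>0$, set $\lambda_0 := (1-\varepsilon) K/I(\alpha)$ so that the linear solution $V_{\lambda_0}$ satisfies $V_{\lambda_0}(0) = (1-\varepsilon)K < K$, and define $\bar u := F^{-1}(V_{\lambda_0})$, a bounded, radially decreasing, $C^2$ function with $\bar u|_{\partial B}=0$. A short calculation shows $\bar u$ is a classical super-solution of \eqref{eq:1} at parameter $\lambda$ if and only if
\begin{equation*}
(\lambda_0 - \lambda)\,\psi \;\geq\; f'(\bar u)\,(V_{\lambda_0})_r^2\qquad\text{in } B.
\end{equation*}
From the representation $(V_{\lambda_0})_r = -(\lambda_0/(r\mu))\int_0^r s\psi\mu\,ds$, the bound $\int_0^r s\psi\mu\,ds \leq (M(r)/\alpha)\mu(r)$ (which follows from $\psi\leq M\varphi$ and $s\varphi\mu\,ds = (1/\alpha)\,d\mu$), together with $f'(\bar u) \leq f'(F^{-1}((1-\varepsilon)K))$, yields a uniform bound $f'(\bar u)(V_{\lambda_0})_r^2 = O(\alpha/(\log\alpha)^2)$ on the set $\{\psi\geq c_0>0\}$. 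Choosing $\lambda = \lambda_0(1 - C/\log\alpha)$ for a large enough $C=C(\varepsilon)$ then makes the inequality hold there. Near $r=1$, where $\psi$ may vanish (as in the physical examples $\psi=\varphi^{3/2}$), $\bar u$ must be modified in a boundary layer---by capping $V_{\lambda_0}$ with a smooth profile matching value and first derivative, or by exploiting $f'(\bar u)\to f'(0)$ there---in order to preserve the inequality uniformly. The sub/super-solution machinery underlying Proposition \ref{p:1} (monotone iteration starting from the trivial sub-solution $u_0\equiv 0$) then gives $\lambda^*(\alpha)\geq \lambda = (1-\varepsilon-o(1))\cdot 2K\alpha/\log\alpha$, and letting $\varepsilon\to 0$ produces the matching lower bound.

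\emph{Main obstacle.} The principal technical difficulty lies in executing the lower bound under the weak regularity \eqref{eq:2}--\eqref{eq:3} (Lipschitz, not analyticity) on $f,\varphi,\psi$: the Laplace-type asymptotics for $I(\alpha)$ and $V_{\lambda_0}$ must be quantitative and uniform in $r$ up to the boundary, and the boundary-layer modification near $r=1$ must be arranged so as not to shift the leading-order value of $\lambda$. The integrability condition \eqref{eq:3} on $M(r) = \max_{[0,r]}\psi/\varphi$ is precisely what tames the degeneracy of $\varphi$ at $r=1$ well enough for the pointwise super-solution inequality to hold with the required slack $\lambda_0-\lambda = O(\alpha/(\log\alpha)^2)$.
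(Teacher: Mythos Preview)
Your strategy---the Keller--Osserman substitution followed by comparison with the linear drift equation---is exactly the paper's (the paper uses $G=K-F$, a cosmetic difference), and your upper bound is essentially Lemma~\ref{l:3}. The gap is in the lower bound, and you have correctly located it: the super-solution inequality $(\lambda_0-\lambda)\psi \ge f'(\bar u)(V_{\lambda_0})_r^2$ genuinely fails wherever $\psi$ vanishes or is small, because the right-hand side stays positive there---indeed $|(V_{\lambda_0})_r|$ is only controlled by $\lambda_0 M(r)/(\alpha r)$, and $M$ may blow up as $r\to 1$. Your proposed fixes do not close this: even with $f'(\bar u)$ bounded near the boundary, the left side is identically zero where $\psi=0$, so no choice of $\lambda<\lambda_0$ helps; and a boundary-layer cap on $V_{\lambda_0}$ has to be reconciled with the boundary condition $V=0$ at $r=1$ while keeping the super-solution property, which you have not done.

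The paper's resolution is not a boundary patch but a modification of the linear auxiliary problem itself. After rescaling $y=\sqrt\alpha\,r$, instead of right-hand side $\beta_w\psi_\alpha$ the paper takes $\beta_w\bigl(\psi_\alpha + M_\alpha^2/\alpha\bigr)$; see Lemma~\ref{l:4}. The extra term $M_\alpha^2/\alpha$ is asymptotically harmless---by \eqref{eq:3} it contributes $O(1)$ to $\tilde G(\sqrt\alpha)-\tilde G(0)$ against $\tfrac12\log\alpha$ from the main term---but it is precisely what is needed so that the quadratic error satisfies $(\tilde G')^2 \le c\beta_w^2\bigl(\psi_\alpha + M_\alpha^2/\alpha\bigr)$ \emph{uniformly on the whole interval}, including where $\psi_\alpha$ vanishes (see \eqref{eq:a32}--\eqref{eq:a35}). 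The sub-solution condition then collapses to the single scalar inequality $\beta_w(1-c\beta_w f_v(w)) \ge \lambda/\alpha$, with no pointwise obstruction anywhere. Feeding the anticipated error back into the right-hand side of the linear equation, rather than trying to dominate it by $\psi$ afterwards, is the idea your proposal is missing.
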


The second theorem provides details of the behavior of the extremal solution when $\alpha\gg1:$

\begin{theorem} \label{t:2} 
Let $u_{\alpha}^*$ be the extremal solution of \eqref{eq:1}.
Then, as $\alpha \to \infty,$ we have
\begin{eqnarray}
&& i) ~u^*_{\alpha}(x) \to 0 \quad \forall x\in B\setminus\{0\}, \qquad  u^*_{\alpha} (0) \to \infty,
\end{eqnarray}
and
\begin{eqnarray}\label{eq:t222}
&& ii) ~\int_{B} u^*_{\alpha}(x) dx \to 0, \qquad \int_{B} \psi(x) f(u_{\alpha}^*(x))dx\to f(0) \int_{B} \psi(x)dx.
\end{eqnarray}
\end{theorem}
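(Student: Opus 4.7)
My plan is to prove both parts of Theorem~\ref{t:2} together using an Emden--Fowler substitution, leveraging the sharp asymptotic of Theorem~\ref{t:1}. Introduce the auxiliary function $w_\alpha(r) := \int_{u^*_\alpha(r)}^{\infty} ds/f(s) \in [0,K]$, with $w_\alpha(1)=K$. A direct computation yields
\begin{equation*}
\Delta w_\alpha + \alpha r\varphi(r)\, w_\alpha'(r) \;=\; \lambda^*\psi(r) + f'(u^*_\alpha)|\nabla w_\alpha|^2 \;\ge\; \lambda^*\psi(r),
\end{equation*}
since the correction term is nonnegative. Let $\bar w_\alpha$ solve the linear equation $(r\mu\bar w_\alpha')'=\lambda^* r\mu\psi$ with $\bar w_\alpha(1)=K$, so explicitly
\begin{equation*}
\bar w_\alpha(r) \;=\; K \;-\; \lambda^*\int_r^1 \frac{1}{t\mu(t)}\int_0^t s\mu(s)\psi(s)\,ds\,dt.
\end{equation*}
The weak maximum principle for the weighted operator $\frac{1}{r\mu}\partial_r(r\mu\partial_r)$ gives $w_\alpha\le\bar w_\alpha$ in $B$.

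Next is the key asymptotic analysis via Laplace's method. Changing variable $\tau=\alpha\Phi(s)$ with $\Phi(s):=\int_0^s s'\varphi(s')\,ds'$, one shows $\int_0^1 (t\mu(t))^{-1}\int_0^t s\mu\psi\,ds\,dt \sim \log\alpha/(2\alpha)$ (the integrand localizes at $t\sim 1/\sqrt\alpha$), while for fixed $r\in(0,1]$ the same integral restricted to $[r,1]$ is only $O(1/\alpha)$. Combining with $\lambda^*\sim 2K\alpha/\log\alpha$ from Theorem~\ref{t:1} yields $\bar w_\alpha(0)\to 0$ and $\bar w_\alpha(r)\to K$ for each $r>0$. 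The former, together with $0\le w_\alpha(0)\le \bar w_\alpha(0)$, gives $w_\alpha(0)\to 0$, i.e.\ $u^*_\alpha(0)\to\infty$, which is the blow-up part of (i).

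The matching decay $u^*_\alpha(r)\to 0$ for fixed $r>0$ (the harder direction, since the max-principle argument is one-sided) uses the integral representation $u^*_\alpha(r)=\int_r^1 v_\alpha(t)/(t\mu(t))\,dt$ with $v_\alpha(t):=\lambda^*\int_0^t s\mu\psi f(u^*_\alpha)\,ds$. Laplace's method applied to $v_\alpha(t)/\mu(t)$ (using that $\mu(s)/\mu(t)=e^{-\alpha(\Phi(t)-\Phi(s))}$ sharply concentrates at $s=t$ for $t>0$) gives $v_\alpha(t)/\mu(t) \approx \lambda^*\psi(t) f(u^*_\alpha(t))/(\alpha\varphi(t))$. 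Combined with monotonicity of $u^*_\alpha$, this yields the self-consistent estimate
\begin{equation*}
u^*_\alpha(r) \;\le\; (1+o(1))\,\frac{\lambda^*}{\alpha}\, f(u^*_\alpha(r))\int_r^1 \frac{\psi(t)}{t\varphi(t)}\,dt.
\end{equation*}
Since $\lambda^*/\alpha\to 0$ and the last integral is finite for $r>0$, the function $g(u):=u/f(u)$ satisfies $g(u^*_\alpha(r))\to 0$. A branch analysis of $g$ (which peaks at a unique $u_{\mathrm{crit}}$ determined by $f(u_{\mathrm{crit}})=u_{\mathrm{crit}}f'(u_{\mathrm{crit}})$ and tends to $0$ at both $u\to 0$ and $u\to\infty$), combined with continuity of $u^*_\alpha(\cdot)$ and the boundary condition $u^*_\alpha(1)=0$, shows that for $\alpha$ large $u^*_\alpha(r)$ remains on the small branch, so $u^*_\alpha(r)\to 0$.

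For part (ii), the integral representation gives $\int_B u^*_\alpha\,dx = \pi\lambda^*\int_0^1 s\mu\psi f(u^*_\alpha) H(s)\,ds$ with $H(s):=\int_s^1 t/\mu(t)\,dt$; a Laplace estimate yields $\int_0^1 s\mu\psi H\,ds = O(1/\alpha)$, and combined with pointwise convergence $f(u^*_\alpha)\to f(0)$ for $s>0$ (with the shrinking-core contribution controlled via the already-established asymptotics), this gives $\int_B u^*_\alpha\to 0$. The convergence $\int_B\psi f(u^*_\alpha)\,dx \to f(0)\int_B\psi\,dx$ follows from dominated convergence using the pointwise convergence of (i) and a uniform-in-$\alpha$ $L^1$-bound on $\psi f(u^*_\alpha)$ obtained by integrating the PDE over $B$ and invoking Theorem~\ref{t:1}. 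The main obstacle throughout is the third paragraph: establishing the upper bound $u^*_\alpha(r)=o(1)$ for fixed $r>0$, which hinges on the self-consistent Laplace estimate combined with the branch-identification argument.
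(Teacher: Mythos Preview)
Your argument for $u^*_\alpha(0)\to\infty$ via the comparison $w_\alpha\le\bar w_\alpha$ is essentially the paper's Lemma~\ref{l:10a}, and is fine.

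The gap is in the ``harder direction'' of part~(i). Your self-consistent inequality
\[
u^*_\alpha(r)\;\le\;(1+o(1))\,\frac{\lambda^*}{\alpha}\,f\bigl(u^*_\alpha(r)\bigr)\int_r^1\frac{\psi(t)}{t\varphi(t)}\,dt
\]
does not follow from Laplace's method as claimed. In the inner integral $\int_0^t s\mu(s)\psi(s)f(u^*_\alpha(s))\,ds$ the factor $f(u^*_\alpha(s))$ is \emph{increasing} as $s\to 0$ while $\mu(s)$ is decreasing; Laplace localises at $s=t$ only if the core contribution near $s=0$ is negligible, and for that you would need an a~priori bound on $f(u^*_\alpha(0))$ ruling out, say, growth like $e^{c\alpha}$. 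You have no such bound at this stage, so the step is circular. (Note that monotonicity actually gives the inequality in the \emph{opposite} direction: $\int_0^t s\mu\psi f(u^*_\alpha)\,ds\ge f(u^*_\alpha(t))\int_0^t s\mu\psi\,ds$.) The branch argument would indeed close if the inequality held uniformly on $[r_0,1]$, but that is precisely the missing piece.

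The paper breaks the circularity by using your same function $w_\alpha$ at an \emph{intermediate} scale. Integrating the pointwise lower bound on $w_\alpha'$ from $0$ to $r=\alpha^{-1/2+\delta}$ (i.e.\ $y=\alpha^\delta$ in rescaled variables) and using $w_\alpha(0)\ge 0$ yields $w_\alpha(\alpha^{-1/2+\delta})\ge c\delta K$, hence the uniform anchor $u^*_\alpha(\alpha^{-1/2+\delta})\le C(\delta)$. With this in hand, a comparison on the flux $I(y)=-y e^{F(y)}v'(y)$ propagates the bound outward and gives the quantitative estimate $u^*_\alpha(r)\le c(1+\log(1/r))/\log\alpha$ for $r\ge\alpha^{-1/2+2\delta}$ (Lemma~\ref{l:10}).

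The same issue recurs in part~(ii). Dominated convergence for $\int_B\psi f(u^*_\alpha)$ needs a dominating function, not just an $L^1$ bound; and ``integrating the PDE over $B$'' produces $\lambda^*\int_B\mu\psi f(u^*_\alpha)=-2\pi\mu(1)(u^*_\alpha)'(1)$ with $\mu(1)=e^{\alpha\Phi(1)}$ exponentially large and no independent control on the boundary flux, so no useful $L^1$ estimate follows. The paper instead handles the core contribution directly via the auxiliary function $\Gamma(y)=y(v^*_\alpha)'+y^2\varphi_\alpha v^*_\alpha$, whose sign change produces a point $a<\alpha^{2\delta}$ where $f(v^*_\alpha)/v^*_\alpha\asymp\log\alpha$, together with a growth lemma bounding solutions of $f(\theta)/\theta=c\log\alpha$ by $\alpha^\gamma$ for any $\gamma>0$ (Lemmas~\ref{l:theta} and~\ref{l:x}).
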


While Proposition \ref{p:1} ensures that extremal solutions of problem \eqref{eq:1} are bounded,
the establishment of a reasonable uniform upper bound appeared to be a difficult task.
However, in case of sufficiently regular non-linearities  or non-linearities with sufficiently fast growth at infinity we have
the following result.

\begin{theorem} \label{t:3} Assume that there exist  constants $0<c_0<1,$  $c_1>1$ and $t_0>0$
such that 
\begin{eqnarray}\label{eqt:1}
f(t_2)\ge c_1 f(t_1), \quad t_2 >t_1>  t_0,
\end{eqnarray}
implies
\begin{eqnarray}\label{eqt:2}
(1-c_0) f^{\prime}(t_2)\ge f^{\prime}(t_1).
\end{eqnarray}
Then,
\begin{eqnarray}\label{eqt:3}
u^*_{\alpha}(0) \le c A,
\end{eqnarray}
where $A$ is the solution of
\begin{eqnarray}\label{eqt:4}
f^{\prime} (A)= c \log \alpha
\end{eqnarray}
and $c>0$ is some constant independent of $\alpha.$

Moreover,
\begin{eqnarray}\label{eqt:5}
\int_B (u_{\alpha}^*(x))^pdx \to 0 \quad \mbox{as} \quad \alpha \to \infty,
\end{eqnarray}
for any $1\le p<\infty.$

\end{theorem}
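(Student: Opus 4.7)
The plan is to combine the semi-stability inequality \eqref{eq:semistab} for the extremal solution $u^*_\alpha$ with the integrated weighted form of \eqref{eq:1}, and to invoke the doubling-type assumption \eqref{eqt:1}--\eqref{eqt:2} to close the resulting estimate. First, using the tent test function $\eta(x)=(1-|x|/\rho)_{+}$ in \eqref{eq:semistab}, one obtains $\int|\nabla\eta|^{2}\mu\,dx\le\pi\mu(\rho)$ and, by the radial monotonicity of $u^*_\alpha$, $\int\psi f'(u^*_\alpha)\eta^{2}\mu\,dx\ge c\,\psi(\rho/2)f'(u^*_\alpha(\rho/2))\rho^{2}$. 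Restricting $\rho$ to $\rho\le c/\sqrt{\alpha}$, on which $\mu$ is uniformly bounded and $\varphi,\psi$ stay close to $1$, the semi-stability reduces to the pointwise bound $f'(u^*_\alpha(r))\le C/(\lambda^* r^{2})$ for $0<r\le r_{0}:=c'/\sqrt{\alpha}$. Combining this with the sharp asymptotic $\lambda^*\sim 2K\alpha/\log\alpha$ of Theorem \ref{t:1} already yields $u^*_\alpha(r_{0})\le A$, with $A$ defined in \eqref{eqt:4} for an appropriate absolute constant.

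Next, I would bound the increase of $u^*_\alpha$ from $r_{0}$ back to the origin using the radial weighted equation $-(r\mu u^{*\prime})'=\lambda^* r\mu\psi f(u^*)$, whose two quadratures produce $u^*_\alpha(0)-u^*_\alpha(r_{0})\le C\lambda^*\int_{0}^{r_{0}}s\,f(u^*_\alpha(s))\log(r_{0}/s)\,ds$. Denoting $M:=u^*_\alpha(0)$ and splitting this integral at the natural scale $s_{M}:=\sqrt{C/(\lambda^* f'(M))}$, I use the trivial bound $f(u^*)\le f(M)$ on $[0,s_{M}]$ and the inverted semi-stability bound $f(u^*(s))\le f\big((f')^{-1}(C/(\lambda^* s^{2}))\big)$ on $[s_{M},r_{0}]$. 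After the change of variable $t=(f')^{-1}(C/(\lambda^* s^{2}))$, the second piece becomes an integral of $(f/f')(t)(f''/f')(t)\log(f'(t)/f'(A))\,dt$ over $[A,M]$, while the first piece is $\lesssim (f(M)/f'(M))\log(f'(M)/f'(A))$. Iterating the hypothesis \eqref{eqt:1}--\eqref{eqt:2} along a chain of doublings of $f$ gives a lower bound of the form $f'(t)\ge c\,f(t)^{\gamma}$ for some $\gamma>0$, equivalently $(\log f')'\ge\gamma(\log f)'$; this controls $f/f'\lesssim t$ and bounds the number of doublings from $f(A)$ to $f(M)$, and hence the factor $\log(f'(M)/f'(A))$, by an absolute constant once $M=O(A)$. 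The resulting inequality closes to give $M\le cA$.

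Finally, for the $L^{p}$ estimate \eqref{eqt:5} I would use the super-level-set bound $|\{u^*_\alpha>t\}|\le C/(\lambda^* f'(t))$ (a direct corollary of Step~1) together with the layer-cake formula to obtain $\int_{B}(u^*_\alpha)^{p}\,dx\le (Cp/\lambda^*)\int_{0}^{cA}t^{p-1}/f'(t)\,dt\le CA^{p}/\lambda^*$, and observe that under the hypothesis $A$ is at most polylogarithmic in $\alpha$ while $\lambda^*\sim\alpha/\log\alpha$, so the right-hand side tends to $0$ for every $p\ge1$. The hard step is the closure in Step~2: by itself the integrated equation yields only a circular inequality of the form $M-A\le CM\log(f'(M)/f'(A))$, which is vacuous as $\alpha\to\infty$. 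The hypothesis \eqref{eqt:1}--\eqref{eqt:2} is precisely what rules out $f'$ being essentially flat on long intervals, allowing the logarithm factor to be absorbed and the estimate to be inverted into $M\le cA$; without it, the approach collapses, reflecting the fact that extremal solutions of Gelfand-type problems in two dimensions are generally unbounded.
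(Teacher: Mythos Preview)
Your Step~1 is correct and, in fact, recovers exactly what the paper obtains in \eqref{eq1:24}: in the rescaled variable $y=\sqrt{\alpha}\,r$, a compactly supported test function on $[0,1]$ in the semi-stability inequality yields $f'(v^*_\alpha(1))\le c\log\alpha$, i.e.\ $u^*_\alpha(r_0)\le A$ for $r_0\sim\alpha^{-1/2}$.

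The gap is in Step~2. Your own diagnosis is accurate: the two quadratures of the radial equation, combined with the pointwise semi-stability bound $f'(u^*_\alpha(r))\le C/(\lambda^*r^2)$, produce at best an inequality of the shape
\[
M-A\;\le\;C\,\frac{f(M)}{f'(M)}\,\log\frac{f'(M)}{f'(A)}+\text{(similar term)}\;\le\;C\,M\,\log\frac{f'(M)}{f'(A)},
\]
and this inequality does \emph{not} close. The assertion that the hypothesis \eqref{eqt:1}--\eqref{eqt:2} ``bounds the number of doublings from $f(A)$ to $f(M)$ \dots\ by an absolute constant once $M=O(A)$'' is false: already for $f(u)=e^u$ one has $\log\big(f(M)/f(A)\big)=M-A$, which for $M=2A$ equals $A\sim\log\log\alpha\to\infty$; the hypothesis only gives a \emph{lower} bound $f'\ge c\,f^\gamma$, hence a lower bound on $\log(f'(M)/f'(A))$, never an upper bound. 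The closure is therefore circular, and the tent-function/direct-integration route cannot be completed as written.

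The paper avoids this by using a different test function in the semi-stability inequality. Working on $[0,1]$ in the rescaled variable and setting $\phi=v^*_\alpha-v^*_\alpha(1)$, $f^\sharp(t)=f(v^*_\alpha(1)+t)$, one takes the Crandall--Rabinowitz/Nedev choice $\eta=\tilde f^\sharp(\phi)=f^\sharp(\phi)-f^\sharp(0)$ in \eqref{eq:semistab} and also multiplies the equation by $g^\sharp(\phi)=\int_0^\phi (f^{\sharp\prime})^2$. Subtracting yields the integral inequality
\[
\int_0^1 f^{\sharp\prime}(\phi)\big(\tilde f^\sharp(\phi)\big)^2\,d\nu\;\le\;\int_0^1 f^\sharp(\phi)\,g^\sharp(\phi)\,d\nu,
\]
and it is \emph{here} that the hypothesis \eqref{eqt:1}--\eqref{eqt:2} is used: it guarantees that on the set $\{f^\sharp(\phi)>\tilde c\,f^\sharp(0)\}$ one has $g^\sharp(\phi)\le(1-\tilde c_0)\,f^{\sharp\prime}(\phi)\tilde f^\sharp(\phi)$, so the high part of the right-hand side is absorbed into the left. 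What remains is controlled by $f^\sharp(0)^2$, giving $\|f^\sharp(\phi)\|_{L^2(d\nu)}\le c\,f^\sharp(0)$, and elliptic $L^p$--estimates then yield $\phi(0)\le c(\lambda^*/\alpha)f^\sharp(0)$, i.e.\ $v^*_\alpha(0)\le v^*_\alpha(1)+c\,f(v^*_\alpha(1))/\log\alpha\le c\,v^*_\alpha(1)\le cA$. This is the mechanism that replaces your Step~2; the pointwise bound from Step~1 is not strong enough by itself.

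Two smaller points. In Step~3 your super-level-set bound $|\{u^*_\alpha>t\}|\le C/(\lambda^*f'(t))$ is only available for $t>u^*_\alpha(r_0)$, since the tent-function argument requires $\rho\lesssim\alpha^{-1/2}$; for small $t$ the level sets extend into the region $r>r_0$, where you need a separate decay estimate (the paper uses Lemma~\ref{l:10}). And the final remark that ``extremal solutions of Gelfand-type problems in two dimensions are generally unbounded'' is not correct: in two dimensions they are always bounded (this is Nedev's theorem, and indeed Lemma~\ref{l:int5} in this paper).
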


\begin{remark}
It is easy to check that the assumptions of Theorem \ref{t:3} are satisfied for most typical nonlinearities
such as $f(u)=\exp(u)$ and $f(u)=(1+u)^p$ for $p>1.$ Moreover, they are also satisfied provided
$f$ is a $C^2$ function   such that  $f^{\prime}(s), f^{\prime\prime}(s)>0$ 
and $f^{\prime\prime}(s)$ is strictly increasing on $(0,\infty)$
(see Lemma \ref{l:proizv}).
\end{remark}

\begin{remark} In the case of exponential nonlinearity, we also have that
\begin{eqnarray}\label{eq:lexp}
\lambda^*(\alpha)=\frac{2\alpha}{\log \alpha}\left(1+O\left(\frac{1}{\sqrt{\log \alpha}}\right)\right)
\end{eqnarray}
and
\begin{eqnarray}
u_{\alpha}^*(0)=O(\log(\log \alpha)),
\end{eqnarray}
see Lemma \ref{l:exp}.
These results are consistent with the formal asymptotic extremal solution of \eqref{eq:1} obtained in \cite{GHH18}.
\end{remark}

The paper is organized as follows. In section 2 we are setting up the stage by giving necessary definitions, providing standard results and introducing rescaling which makes the analysis more convenient. In sections 3,4,5  we give  proofs of theorems 1.1, 1.2, and 1.3 respectively. 

\section{Preliminaries}

In this section we  outline a proof of Proposition \ref{p:1} and  introduce alternative forms of problem \eqref{eq:1},  which will be used in the later sections.

First we observe that problem \eqref{eq:1} can be written in the divergence form.
Indeed one can verify by direct computations that  \eqref{eq:1} can be rewritten as 
follows
\begin{eqnarray}\label{eq:app1}
\left\{
\begin{array}{ccc}
-\nabla\cdot(\mu(r) \nabla u)=\lambda \mu(r)\psi(r) f(u) & \mbox{in} & B,\\
u>0 & \mbox{in} &B,\\
u=0  & \mbox{on} & \partial B,
\end{array}
\right.
\end{eqnarray}
where  $\mu$ is given by \eqref{eq:app1a}. We note that $\mu\in C^{1,\omega}$  ($0<\omega<1$) as
follows from  definition and properties of $\varphi$, while $\psi$ is Lipshitz continuous.
The results presented in this
section deal with the situation when $\alpha>0$ is fixed. Therefore, we will omit subscript
$\alpha$ when referring to minimal and extremal solutions of \eqref{eq:app1}. 

Proposition \ref{p:1} is basically a compilation of well known results (or their minimal
adaptations) presented in \cite{Brezis96,Satt71,Cabre06,CR75,KK74,keller67,nedev,sasha}.
It  follows from the sequence of Lemmas \ref{l:int1}-\ref{l:int5} presented below.

A proof of Proposition \ref{p:1} is based on the construction of sub and super-solutions for problem
\eqref{eq:app1}.
Following \cite{PW,Evans}, we define
a classical positive super-solution of  \eqref{eq:app1} as  a function $\bar u\in C^2(B)\cap C(\bar B)$ 
 positive in $B$ such that
\begin{eqnarray}\label{eq:app2}
\left\{
\begin{array}{ccc}
-\nabla\cdot(\mu(r) \nabla \bar u)\ge \lambda \mu(r)\psi(r) f(\bar u) & \mbox{in} & B,\\
\bar u\ge 0  & \mbox{on} & \partial B,
\end{array}
\right.
\end{eqnarray}
and classical non-negative sub-solution of \eqref{eq:app1} as a function $\underline u\in C^2(B)\cap C(\bar B)$ non-negative in $\bar B$
such that \begin{eqnarray}\label{eq:app2sub}
\left\{
\begin{array}{ccc}
-\nabla\cdot(\mu(r) \nabla \underline u)\le \lambda \mu(r)\psi(r) f(\underline  u) & \mbox{in} & B,\\
\underline u= 0  & \mbox{on} & \partial B,
\end{array}
\right.
\end{eqnarray}
We note that under the assumptions  of this paper $\underline u=0$ is always a sub-solution.

\begin{lemma} \label{l:int1}
Assume \eqref{eq:app1} admits positive,
 classical super-solution $\bar u$. Then \eqref{eq:app1} admits a unique minimal,  
positive classical solution $u_{\lambda}\in C^{2,\omega} (\bar B).$
This minimal solution is radially symmetric, strictly decreasing
and bounded by $\bar u$ from above.
\end{lemma}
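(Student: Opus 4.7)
The plan is to obtain the minimal solution $u_\lambda$ via the standard monotone iteration starting from the trivial sub-solution $\underline u\equiv 0$, using $\bar u$ only as an a priori upper barrier. Concretely, set $u_0\equiv 0$ and, inductively, let $u_{n+1}\in C^{2,\omega}(\bar B)$ be the unique solution of the linear Dirichlet problem
\begin{equation*}
-\nabla\cdot\bigl(\mu(r)\nabla u_{n+1}\bigr)=\lambda\mu(r)\psi(r)f(u_n)\quad\text{in }B,\qquad u_{n+1}=0\quad\text{on }\partial B.
\end{equation*}
Solvability and $C^{2,\omega}$ regularity at each step follow from Schauder theory: $\mu\in C^{1,\omega}(\bar B)$ is bounded between $1$ and $\exp(\alpha/2)$ so the operator is uniformly elliptic with Hölder coefficients, while $\psi$ is Lipschitz and $f(u_n)$ is bounded because $u_n$ is.

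Next I would prove by induction that $0=u_0\le u_1\le u_2\le\cdots\le\bar u$. The base case $u_1\ge 0$ follows from $f(0)>0$ and the weak maximum principle. For the inductive step, $v_n:=u_{n+1}-u_n$ satisfies $-\nabla\cdot(\mu\nabla v_n)=\lambda\mu\psi\bigl(f(u_n)-f(u_{n-1})\bigr)\ge 0$ with zero boundary values, so $v_n\ge 0$. The upper bound $u_n\le\bar u$ follows by applying the same comparison to $\bar u-u_n$ using the super-solution inequality \eqref{eq:app2} and monotonicity of $f$. The monotone pointwise limit $u_\lambda:=\lim_n u_n$ satisfies $0\le u_\lambda\le\bar u$; interior Schauder estimates applied to the linear equations, combined with the uniform bound by $\bar u$, upgrade this convergence to $C^{2,\omega}_{loc}(B)\cap C(\bar B)$, so $u_\lambda$ is a classical solution, and the strong maximum principle gives $u_\lambda>0$ in $B$.

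Radial symmetry is automatic from the rotational invariance of the operator and data: uniqueness of each linear step forces $u_{n+1}$ to agree with all its rotations whenever $u_n$ does, and $u_0\equiv 0$ is radial. To obtain strict monotonicity I would rewrite the radial version of \eqref{eq:app1} as $(r\mu u_\lambda')'=-\lambda r\mu\psi f(u_\lambda)$ and integrate from $0$ to $r$ using the regularity condition $u_\lambda'(0)=0$, obtaining
\begin{equation*}
r\mu(r)u_\lambda'(r)=-\lambda\int_0^r s\,\mu(s)\psi(s)f(u_\lambda(s))\,ds<0\qquad(r\in(0,1]),
\end{equation*}
since $f\ge f(0)>0$ and $\psi(0)=1$ guarantees $\psi>0$ on a neighborhood of the origin.

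Finally, minimality and uniqueness follow by the same comparison: any positive classical solution $v$ is itself a super-solution, so induction (with $\bar u$ replaced by $v$) yields $u_n\le v$ for every $n$, hence $u_\lambda\le v$; applying this to two minimal solutions forces equality. I do not anticipate any real obstacle here — this is a textbook Perron iteration; the only nuisance is the low regularity of the data ($\mu$ only $C^{1,\omega}$ and $\psi$ only Lipschitz), which means one must justify the Schauder step and pass to the limit carefully rather than appeal to smooth elliptic theory.
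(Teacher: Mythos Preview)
Your proposal is correct and follows essentially the same monotone-iteration approach as the paper, which also starts from $u_0\equiv 0$ and passes to the limit via Schauder estimates (the paper adds a harmless $\Omega$-shift to the iteration and cites Sattinger directly, but this is cosmetic here since $f$ is assumed non-decreasing). One small point: you obtain only $C^{2,\omega}_{\mathrm{loc}}(B)\cap C(\bar B)$ in the limit, whereas the lemma asserts $C^{2,\omega}(\bar B)$; a final global Schauder step on the equation for $u_\lambda$ recovers this once $u_\lambda$ is known to be bounded.
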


\begin{proof}
The minimal solution $u_{\lambda}$ of \eqref{eq:app1} is obtained by a construction using monotone iteration arguments.
Namely, we consider a sequence of functions $\{u_n\}_{n=0}^{\infty}$ with 
$u_0=0$ and $u_n$ defined as
\begin{eqnarray}\label{eq:app2c}
\left\{
\begin{array}{ccc}
-\nabla\cdot(\mu(r) \nabla  u_{n})+\Omega u_n = \lambda \mu(r)\psi(r) f( u_{n-1}) +\Omega u_{n-1}& \mbox{in} & B,\\
u_{n}= 0  & \mbox{on} & \partial B,
\end{array}
\right.
\end{eqnarray}
for $n\ge 1$, where $\Omega>0$ is an arbitrary constant.

As follows from \cite[Theorem 6.14]{GT} for each $n$ problem \eqref{eq:app2c} 
admits a unique solution $u_n\in C^{2,\omega}(\bar B)$. Each $u_n$
is radial, as follows from the uniqueness.
We now define the minimal solution of \eqref{eq:app1} as
\begin{eqnarray}\label{eq:app2d}
u_{\lambda}(x):=\lim_{n\to\infty} u_n(x).
\end{eqnarray}
By \cite[Theorem 2.1]{Satt71} we have that  $u_{\lambda}$ defined by \eqref{eq:app2d} satisfies  $\bar u\ge u_{\lambda}>0$
in $B,$  
belongs to $C^{2,\omega}(\bar B)$ and solves \eqref{eq:app1} classically.
Moreover, since each $u_n$ is radially symmetric we have that $u_{\lambda}$ is also
radially symmetric so that  $u_{\lambda}(x)=u_{\lambda}(|x|)=u_{\lambda}(r)$.
 Consequently, any minimal solution constructed above satisfies
\begin{eqnarray}\label{eq:app1b}
\left\{
\begin{array}{ll}
-\frac{d}{dr} \left( r \mu(r) \frac{d}{dr}u_{\lambda}\right)=\lambda r \mu(r) \psi(r) f(u_{\lambda})
& 0<r<1, \\
 \frac{d}{dr}u_{\lambda}(0)=0,& u_{\lambda} (1)=0.
\end{array}
\right.
\end{eqnarray}
 Integrating \eqref{eq:app1b} we also have that for $r\in(0,1],$
\begin{eqnarray}
\frac{d}{d r} u_{\lambda}(r) = -\frac{\lambda}{r\mu(r)}\int_0^r \mu(s)\psi(s) f(u_{\lambda}(s))sds  <0,
\end{eqnarray}
and hence $u_{\lambda}$  is strictly decreasing. 
\end{proof}

The following lemma uses the notion of a weak solution. Similarly  to \cite{Brezis96}, we define a weak solution of \eqref{eq:app1} as  a non-negative function $u\in L^1(B)$ such
that $\psi f(u) { \rm dist}_{\partial B} \in L^1(B),$ where ${\rm dist}_{\partial B}(x)$ is the distance from $x$ to the boundary of $B$ and
\begin{eqnarray}
-\int_{B} u \nabla\cdot(\mu \nabla \zeta)dx=\int_{B} \psi \mu f(u) \zeta dx,
\end{eqnarray}
for all $\zeta\in C^2(\bar B)$ with $\zeta =0$ on $\partial B.$

\begin{lemma} \label{l:int2}
Problem \eqref{eq:app1} admits a minimal classical solution $u_{\lambda}$
for $0<\lambda<\lambda^*<\infty$. Moreover, the extremal solution $u^*$ defined by \eqref{eq:weaks} is a weak solution of
\eqref{eq:app1}.

\end{lemma}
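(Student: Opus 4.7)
The plan is to establish the lemma in three stages: produce a classical solution on a small initial interval via a super-solution construction, show $\lambda^*<\infty$ by an eigenfunction--Jensen test, and obtain the extremal weak solution by passing to the monotone limit.

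For existence at small $\lambda$, I would solve the auxiliary linear problem $-\nabla\cdot(\mu\nabla w)=\mu\psi$ in $B$ with $w|_{\partial B}=0$; its unique solution $w\in C^{2,\omega}(\bar B)$ is positive by the maximum principle. With $M=\|w\|_\infty$ one has $-\nabla\cdot(\mu\nabla w)=\mu\psi\ge\lambda f(M)\mu\psi\ge\lambda f(w)\mu\psi$ whenever $\lambda f(M)\le 1$, so $w$ is a classical super-solution of \eqref{eq:app1} and Lemma~\ref{l:int1} delivers a minimal classical solution $u_\lambda$ for every $\lambda\in(0,1/f(M)]$. Setting $\lambda^*:=\sup\{\lambda>0:\text{problem \eqref{eq:app1} has a classical solution}\}$, any such solution $u_{\lambda_0}$ is itself a classical super-solution at every $\lambda<\lambda_0$ (because $\lambda_0 f\ge\lambda f$), so Lemma~\ref{l:int1} yields minimal classical solutions $u_\lambda$ throughout $(0,\lambda^*)$; the same comparison shows $\lambda\mapsto u_\lambda$ is pointwise non-decreasing.

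To show $\lambda^*<\infty$, pick $r_0\in(0,1)$ so small that $\psi\ge 1/2$ on $\bar B_{r_0}$ (possible since $\psi$ is continuous with $\psi(0)=1$) and let $(\sigma_1,\phi_1)$ be the first Dirichlet eigenpair of the weighted operator $-\nabla\cdot(\mu\nabla\phi)=\sigma\mu\phi$ on $B_{r_0}$, normalized so $\phi_1>0$. Multiplying \eqref{eq:app1} by $\phi_1$ and integrating over $B_{r_0}$, two integrations by parts give
\begin{equation*}
\lambda\int_{B_{r_0}}\mu\psi f(u_\lambda)\phi_1\,dx=\sigma_1\int_{B_{r_0}}\mu u_\lambda\phi_1\,dx+\int_{\partial B_{r_0}}u_\lambda\mu\,\partial_\nu\phi_1\,d\sigma.
\end{equation*}
Hopf's lemma gives $\partial_\nu\phi_1<0$ on $\partial B_{r_0}$ while $u_\lambda>0$ there, so the boundary integral is non-positive; combining with $\psi\ge 1/2$ on $B_{r_0}$ and Jensen's inequality for the convex $f$ with the probability measure $\mu\phi_1\,dx/\int\mu\phi_1\,dx$, and writing $\bar u_\lambda$ for the resulting average, one obtains $(\lambda/2)f(\bar u_\lambda)\le\sigma_1\bar u_\lambda$, hence $\lambda\le 2\sigma_1\sup_{s\ge 0}s/f(s)$. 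Convexity together with $\int_0^\infty ds/f(s)<\infty$ forces $f(s)/s\to\infty$, while $f(0)>0$ gives $s/f(s)\to 0$ as $s\to 0^+$, so the supremum is finite and $\lambda^*<\infty$.

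For the extremal solution, monotonicity in $\lambda$ produces a measurable limit $u^*(x):=\lim_{\lambda\uparrow\lambda^*}u_\lambda(x)\in[0,\infty]$. The displayed inequality above already bounds $\int_{B_{r_0}}\mu\psi f(u_\lambda)\phi_1\,dx$ uniformly in $\lambda$; since $\mu,\psi,\phi_1$ are bounded below on $B_{r_0/2}$, this gives a uniform bound on $\int_{B_{r_0/2}}f(u_\lambda)\,dx$, and using that $u_\lambda$ is radially decreasing (Lemma~\ref{l:int1}) one extracts a uniform $L^\infty$ bound on $u_\lambda$ on $B\setminus B_{r_0/2}$. Combining the two estimates yields $\sup_\lambda\int_B u_\lambda\,dx<\infty$ and $\sup_\lambda\int_B\psi f(u_\lambda)\,\mathrm{dist}_{\partial B}\,dx<\infty$, so $u^*<\infty$ almost everywhere. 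Passing to the limit in the weak formulation of \eqref{eq:app1} against $\zeta\in C^2(\bar B)$ with $\zeta|_{\partial B}=0$ is then routine: the left-hand side converges by dominated convergence from $|\nabla\cdot(\mu\nabla\zeta)|\le C$ and the uniform $L^1$ bound on $u_\lambda$, while the right-hand side converges by monotone convergence applied to $\zeta_\pm$ separately using $|\zeta|\le C\,\mathrm{dist}_{\partial B}$. The main obstacle here is this last boundary integrability: the eigenfunction test controls only a weighted interior integral, and propagating that control up to $\partial B$ is precisely where the radial-monotonicity argument is needed.
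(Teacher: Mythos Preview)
Your argument is correct and follows the same three-stage outline as the paper (super-solution for small $\lambda$; eigenfunction test for $\lambda^*<\infty$; monotone limit for the extremal weak solution), but the implementation of the second and third steps differs in interesting ways.

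For $\lambda^*<\infty$, the paper uses the elementary linear lower bound $f(s)\ge \eps s$ (valid under convexity, $f(0)>0$, and \eqref{eq:2}) together with the \emph{generalized} eigenvalue problem $-\nabla\cdot(\mu\nabla\xi)=\kappa\,\psi\mu\,\xi$ on the full disk; this absorbs the possible vanishing of $\psi$ into the weight and yields $\lambda^*\le\kappa_1/\eps$ in one line. You instead localize to $B_{r_0}$ where $\psi\ge 1/2$, use the ordinary weighted eigenproblem $-\nabla\cdot(\mu\nabla\phi)=\sigma\mu\phi$, discard the boundary term via Hopf's lemma, and close with Jensen's inequality to get $\lambda\le 2\sigma_1\sup_{s\ge 0} s/f(s)$. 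Your route avoids the degenerate-weight eigenvalue problem at the cost of the extra Hopf/Jensen steps; the paper's route is shorter but relies on the variational characterization of $\kappa_1$ with a weight that may vanish on part of $B$.

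For the extremal solution, the paper simply cites \cite{Brezis96}, whereas you give a self-contained argument: the eigenfunction test bounds $\int_{B_{r_0/2}} f(u_\lambda)$, radial monotonicity (Lemma~\ref{l:int1}) then gives a uniform $L^\infty$ bound on the annulus $B\setminus B_{r_0/2}$, and the two together yield both $\sup_\lambda\|u_\lambda\|_{L^1}<\infty$ and $\sup_\lambda\int_B\psi f(u_\lambda)\,\mathrm{dist}_{\partial B}<\infty$, after which the limit passes by dominated/monotone convergence. This is a nice, explicit substitute for the reference, and the use of radial monotonicity to carry interior control out to $\partial B$ is exactly the ingredient the general Brezis--Cazenave--Martel--Ramiandrisoa argument replaces by a more abstract barrier.
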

\begin{proof}

First observe that $u_{\lambda}$ is a non-decreasing  function of $\lambda.$ 
This follows from the fact that
$u_{\lambda^{\prime}}$ is a super-solution for  problem \eqref{eq:app1} with
 $\lambda<\lambda^{\prime}.$ Hence, if \eqref{eq:app1} with $\lambda=\lambda^{\prime}$
 admits a classical  solution, then \eqref{eq:app1} admits a classical solution for $\lambda\in(0,\lambda^{\prime}]$.

Next, let $\tau$ be a solution of 
\begin{eqnarray}\label{eq:app4}
\left\{
\begin{array}{ccc}
-\nabla\cdot(\mu \nabla  \tau )= 1 & \mbox{in} & B,\\
\tau = 0  & \mbox{on} & \partial B.
\end{array}
\right.
\end{eqnarray}
It is easy to see that $\tau$ is a super-solution for \eqref{eq:app1} provided
$\lambda \le \big(\mu(1) f(\tau(0))\big)^{-1}$. This establishes the existence of a minimal solution
for small enough $\lambda$.

Now let us show that $\lambda^*<\infty,$ which is done by a slight adaptation
of \cite[Lemma 5]{Brezis96}. This adaptation is needed because $\psi$ might be
zero in some portion of $B$. 
By convexity of $f$ we have that there is $\eps>0$ such that $f(s)>\eps s$ for $s\ge 0.$
Hence
\begin{eqnarray}\label{eq:app8}
-\nabla \cdot (\mu \nabla u) \ge \eps \lambda  \psi \mu u \quad \mbox{in} \quad B.
\end{eqnarray}
Let $\kappa_1, \xi_1$ be the principal eigenvalue and the corresponding eigenfunction
of  the generalized eigenvalue problem
\begin{eqnarray}
\left\{
\begin{array}{ccc}
-\nabla \cdot (\mu \nabla \xi)=\kappa \psi \mu \xi  & \mbox{in} & B, \\
\xi=0 & \mbox{on} & \partial B.
\end{array}
\right.
\end{eqnarray}
Variational  characterization of $\kappa_1$ and arguments identical to these of
\cite[Theorem 8.38]{GT} show that $\kappa_1>0$ and $\xi_1>0$ in $B$. 

Multiplying \eqref{eq:app8} by $\xi_1$ and integrating by parts we obtain
that 
\begin{eqnarray}
\kappa_1 \int_B \psi \mu u \xi_1 dx \ge \eps \lambda \int_B \psi \mu u \xi_1 dx.
\end{eqnarray}
Thus,
$\eps \lambda \le \kappa_1$ and hence $\lambda^*\le \kappa_1/\eps.$ 

Finally, proceeding as in the proof of \cite[Lemma 5]{Brezis96} with the only modification
that $-\Delta$ is replaced by $-\nabla\cdot( \mu \nabla (\cdot)),$ we recover that $u^*$ is
a weak solution of \eqref{eq:app1}.
\end{proof}

\begin{lemma} \label{l:int3}
Problem \eqref{eq:app1} admits neither classical, nor weak solutions for $\lambda>\lambda^*$.
\end{lemma}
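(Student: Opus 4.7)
The plan is to argue by contradiction, following \cite[Lemma 5]{Brezis96}. Assume that some $\lambda_1 > \lambda^*$ admits a weak solution $u_1$ of \eqref{eq:app1}. I will construct a classical solution at every $\lambda \in (\lambda^*, \lambda_1)$, contradicting Lemma \ref{l:int2}. Fix such a $\lambda$ and set $\theta := \lambda/\lambda_1 \in (0,1)$. Monotonicity of $f$ and linearity of the divergence-form operator give, in the weak sense,
\begin{equation*}
-\nabla\cdot(\mu\nabla(\theta u_1)) = \lambda\mu\psi f(u_1) \ge \lambda \mu \psi f(\theta u_1),
\end{equation*}
so $w := \theta u_1$ is a non-negative $L^1$ weak super-solution of \eqref{eq:app1} at parameter $\lambda$, with a strict gap $\lambda\mu\psi[f(u_1) - f(w)] > 0$ on the set $\{u_1 > 0\}$ that will absorb the regularization error below.

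Next, use $w$ to produce a classical solution at parameter $\lambda$. Truncate the nonlinearity by $f_k(t) := \min(f(t), f(k))$, which is bounded, Lipschitz, non-decreasing and convex, and run the monotone iteration of Lemma \ref{l:int1} starting from zero, with $f$ replaced by $f_k$. Each iterate lies in $C^{2,\omega}(\overline B)$ by \cite[Theorem 6.14]{GT}, and a Kato-type weak maximum principle for the weighted operator $-\nabla\cdot(\mu\nabla\cdot)$ (available since $\mu \in C^{1,\omega}(\overline B)$ is strictly positive on $\overline B$) shows by induction that every iterate is bounded a.e.\ by $w$. Letting $n\to\infty$ produces a classical solution $V_k \le w$ of the truncated problem at parameter $\lambda$; since $f_k$ is non-decreasing in $k$ the sequence $\{V_k\}$ is monotone and pointwise bounded by $w$, hence $V_k \to V$ in $L^1(B)$ with $V \le w$.

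It remains to upgrade $V$ to a classical solution of the untruncated problem at parameter $\lambda$. Using the strict gap $f(u_1) - f(w) > 0$ and the integrability hypothesis $\psi f(u_1)\mathrm{dist}_{\partial B} \in L^1(B)$ from the definition of a weak solution, one tests the weak formulation against the Green's function of $-\nabla\cdot(\mu\nabla\cdot)$ to obtain $\psi f(V)\mathrm{dist}_{\partial B} \in L^1(B)$ plus a quantitative improvement that feeds back via elliptic regularity to give $V \in L^\infty(B)$. Standard elliptic bootstrap then yields $V \in C^{2,\omega}(\overline B)$, producing a classical solution of \eqref{eq:app1} at parameter $\lambda > \lambda^*$, contradicting Lemma \ref{l:int2}.

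The main obstacle is the Kato-type $L^1$-comparison between the classical iterates and the merely $L^1$ super-solution $w$, together with the final upgrade from weak to classical. Both are delicate but standard: dividing through by the smooth positive weight $\mu$ reduces the weighted operator to the Laplacian on $B$, after which the manipulations with smooth cutoffs in the weak formulation proceed exactly as in \cite[proof of Lemma 5]{Brezis96}, with the integrability assumption on $\psi f(u_1)\mathrm{dist}_{\partial B}$ playing the same role here as in that reference.
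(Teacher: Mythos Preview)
Your approach is essentially the same as the paper's, which simply states that the proof is a line-by-line adaptation of \cite[Theorem~3]{Brezis96} with $-\Delta$ replaced by $-\nabla\cdot(\mu\nabla(\cdot))$; you are carrying out that adaptation explicitly, and the overall strategy (weak solution at $\lambda_1>\lambda^*$ $\Rightarrow$ weak supersolution with slack at $\lambda\in(\lambda^*,\lambda_1)$ $\Rightarrow$ classical solution at $\lambda$ via truncation, monotone iteration, and regularity bootstrap) is correct. One small correction: the sentence ``dividing through by the smooth positive weight $\mu$ reduces the weighted operator to the Laplacian on $B$'' is not literally true, since $-\mu^{-1}\nabla\cdot(\mu\nabla u)=-\Delta u-\mu^{-1}\nabla\mu\cdot\nabla u$; the point you actually need is only that $\mu\in C^{1,\omega}(\bar B)$ is bounded above and below by positive constants, so the Green's function and $L^1$-comparison estimates for $-\nabla\cdot(\mu\nabla(\cdot))$ are equivalent (up to constants) to those for $-\Delta$, and the manipulations in \cite{Brezis96} carry over without change.
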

Proof of this lemma is a line by line adaptation of  \cite[Theorem 3]{Brezis96} 
with the only difference that $-\Delta$ is replaced by $-\nabla\cdot( \mu \nabla (\cdot))$ 
and is omitted here.

As a next step we give a proof of the semi-stability condition \eqref{eq:semistab}. The proof is  similar to the
one of \cite[Theorem 4.2]{Satt71}, with a slight modification, which is required due to
the restriction on regularity of the nonlinear term in \eqref{eq:app1}.
 
\begin{lemma} \label{l:int4}
The semi-stability condition \eqref{eq:semistab} holds for any minimal
solution $u_{\lambda}$ with $\lambda\in(0,\lambda^*]$.
\end{lemma}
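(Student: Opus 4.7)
The plan is to prove the inequality first for $\lambda<\lambda^*$ and then obtain the case $\lambda=\lambda^*$ by passage to the limit. The core idea for the strict case is to exploit the monotone family of minimal solutions: pick a slightly larger parameter $\lambda'\in(\lambda,\lambda^*)$, form $v:=u_{\lambda'}-u_\lambda$, and use the convexity of $f$ together with a weighted Picone-type identity.

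Fix $\lambda<\lambda'<\lambda^*$. By Lemma \ref{l:int2}, both $u_\lambda$ and $u_{\lambda'}$ are classical, and the monotone iteration construction (applied with $u_{\lambda'}$ as a super-solution for the $\lambda$-problem) shows $u_{\lambda'}\ge u_\lambda$; applying the strong maximum principle to the linear equation satisfied by $v$ upgrades this to $v>0$ in $B$. Subtracting the two equations and using $f(u_{\lambda'})-f(u_\lambda)\ge f'(u_\lambda)\,v$, one obtains the pointwise differential inequality
\begin{equation*}
-\nabla\!\cdot\!(\mu\nabla v)\ge \lambda'\,\psi\mu f'(u_\lambda)\,v+(\lambda'-\lambda)\,\psi\mu f(u_\lambda)\quad\text{in }B.
\end{equation*}
For $\eta\in C_c^\infty(B)$, the quotient $\eta^2/v$ is smooth and compactly supported, so multiplying by it and integrating by parts yields
\begin{equation*}
\int_B \mu\nabla v\cdot \nabla(\eta^2/v)\,dx\ge \lambda'\int_B \psi\mu f'(u_\lambda)\eta^2\,dx+(\lambda'-\lambda)\int_B \psi\mu f(u_\lambda)\frac{\eta^2}{v}\,dx.
\end{equation*}
The pointwise Picone identity $\nabla v\cdot\nabla(\eta^2/v)=|\nabla\eta|^2-|\nabla\eta-(\eta/v)\nabla v|^2\le |\nabla\eta|^2$ bounds the left side by $\int_B \mu|\nabla\eta|^2\,dx$, and discarding the non-negative last term gives $\int_B \mu|\nabla\eta|^2\,dx\ge \lambda'\int_B \psi\mu f'(u_\lambda)\eta^2\,dx$, a fortiori the desired inequality at parameter $\lambda$. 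A standard density argument using the boundedness of $\mu$ and of $f'(u_\lambda)$ (since $u_\lambda$ is bounded) extends this from $C_c^\infty(B)$ to $H_0^1(B)$.

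For the endpoint $\lambda=\lambda^*$, let $\lambda_n\nearrow \lambda^*$; by Lemma \ref{l:int2} and the fact that $u^*$ is classical, we have $u_{\lambda_n}\nearrow u^*$ with uniform bound $u_{\lambda_n}\le u^*\le \|u^*\|_\infty$. Hence $f'(u_{\lambda_n})\to f'(u^*)$ pointwise and in $L^\infty(B)$, so dominated convergence lets us pass to the limit on the right-hand side of the stability inequality for fixed $\eta\in H_0^1(B)$, yielding \eqref{eq:semistab} for $u^*=u_{\lambda^*,\alpha}$.

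The only delicate point is the manipulation with the test function $\eta^2/v$: because $v$ vanishes on $\partial B$ this is ill-behaved globally, which is why the argument is set up first for compactly supported $\eta$ and only then extended by density. The other place to be careful is the strict positivity $v>0$ in $B$, needed to divide by $v$; this is supplied by the strong maximum principle, since $v$ satisfies a linear elliptic inequality with bounded coefficients and is non-negative.
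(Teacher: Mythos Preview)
Your argument is correct in substance and takes a genuinely different route from the paper. The paper argues by contradiction through the \emph{linearized eigenvalue problem}: it introduces $\mathcal{L}\eta=-\nabla\!\cdot(\mu\nabla\eta)-\lambda\mu\psi f'(u_\lambda)\eta$, regularizes $f'$ to $f'_\eps\in C^\omega$ so that the principal eigenfunction $\tilde\eta_1$ is $C^{2,\omega}$, and shows that if the principal eigenvalue were negative then $u_\lambda-\eps\tilde\eta_1$ would be a positive super-solution strictly below $u_\lambda$, contradicting minimality; a variational comparison then removes the regularization. Your approach instead exploits the monotone family $\lambda\mapsto u_\lambda$ directly: with $v=u_{\lambda'}-u_\lambda>0$ and convexity of $f$ you obtain a differential inequality for $v$, and the weighted Picone identity with $\eta^2/v$ produces the stability inequality at once. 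This is more elementary in that it avoids both the spectral machinery and the $C^\omega$-regularization of $f'$; it also yields the slightly stronger inequality with $\lambda'>\lambda$ on the right for free.

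One point to tidy up: in the endpoint step you invoke ``the fact that $u^*$ is classical'' to run dominated convergence, but in the paper's logical order that is Lemma~\ref{l:int5}, whose proof \emph{uses} Lemma~\ref{l:int4}, so this is circular. The fix is immediate and is what the paper itself does: since $f$ is convex $C^1$, $f'$ is continuous and non-decreasing, hence $f'(u_{\lambda_n})\nearrow f'(u^*)$ a.e.\ along any $\lambda_n\nearrow\lambda^*$; apply monotone convergence (or Fatou) to the right-hand side, which needs no a priori bound on $u^*$. Everything else --- the strict positivity of $v$ via the strong minimum principle for the superharmonic inequality $-\nabla\!\cdot(\mu\nabla v)\ge 0$, the Picone identity, and the $C_c^\infty\to H_0^1$ density step using boundedness of $\mu$, $\psi$, $f'(u_\lambda)$ --- is fine.
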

\begin{proof}
Let 
\begin{eqnarray}
{\cal L} \eta= -\nabla\cdot(\mu \nabla \eta)-\lambda \mu\psi f^{\prime} (u_{\lambda}) \eta, \quad
{\cal \tilde  L} \tilde \eta= -\nabla\cdot(\mu \nabla \tilde \eta)-\lambda \mu\psi  f_{\eps}^{\prime} (u_{\lambda}) \tilde \eta,
\end{eqnarray}
where $ f_{\eps}^{\prime}$ is $C^{\omega}$ function such that
\begin{eqnarray}
\left | f^{\prime}(s)-f_{\eps}^{\prime}(s)\right|<\eps \quad \mbox{for} \quad s\in[0,u_{\lambda}(0)],
\end{eqnarray}
for some $\eps>0$
and set $\lambda_1,\tilde \lambda_1$  to be the principal eigenvalues 
of ${\cal L}$ and ${\cal \tilde L}$ respectively.

Assume first that $\lambda<\lambda^*$. We  claim that $ \lambda_1\ge 0.$
To show that, we note that the  first eigenfunction of ${\cal \tilde  L}$, 
 $\tilde \eta_1$
is positive in $B$ and $\tilde \eta_1\in C^{2,\omega}(\bar B)$
as follows from \cite[Theorem 6.15, Theorem 8.38]{GT}. Choosing the normalization
in such a way that  $||\tilde \eta_1||_{C^1(\bar B)}=1,$ we observe that
 $\tilde u=u_{\lambda}-\eps \tilde \eta_1$ is in  
$C^{2,\omega}(\bar B)$ and positive in $B,$ provided that $\eps$ is sufficiently small.
The latter is guaranteed by the fact that the normal derivative of $u_{\lambda}$ on the boundary is strictly positive as follows from  Hopf's lemma \cite[Chapter 2, Theorem7]{PW}.

We next observe that
\begin{eqnarray}
-\nabla\cdot( \mu \nabla \tilde u)-\lambda \mu \psi f(\tilde u)=
-\eps \tilde \lambda_1 \tilde \eta_1 + \lambda \mu\psi R_{\eps},
\end{eqnarray}
with
\begin{eqnarray}
R_{\eps}= \left[ f(u_{\lambda})-\eps f^{\prime}(u_{\lambda})\tilde \eta_1 -f(u_{\lambda}-\eps\tilde\eta_1)\right]
 +\eps \left[ f^{\prime}(u_{\lambda}) -f^{\prime}_{\eps} (u_{\lambda}) \right] \tilde \eta_1.
\end{eqnarray}
Clearly $|R_{\eps}|=o(\eps \eta_1)$ as $\eps\to 0$.  Thus, if $\tilde \lambda_1<0,$ then $\tilde u$ is a classical 
positive  super-solution
of \eqref{eq:app1} strictly below $u_{\lambda}$ in $B,$ which contradicts the minimality of $u_{\lambda}$. 
Hence, $\tilde \lambda_1\ge 0$.

Next observe that $\lambda_1,\tilde \lambda_1$ admit the variational characterization
\begin{eqnarray}
\lambda_1=\inf_{\eta\in {\cal A}} \int_B( |\nabla \eta|^2-\lambda \psi f^{\prime}(u_{\lambda}) \eta^2)\mu
dx,  \quad \tilde \lambda_1=\inf_{\eta\in {\cal A}} \int_B( |\nabla \eta|^2-\lambda \psi f_{\eps}^{\prime}(u_{\lambda}) \eta^2)\mu dx,
\end{eqnarray}
where ${\cal A}:=\{ \eta \in H_0^1(B) : ||\eta||_{L^2(B)}=1\}.$
This immediately implies that $\lambda_1\ge \tilde \lambda_1 -\lambda\mu(1) \eps.$
Since $\eps$ is arbitrarily  small, we have that $\lambda_1\ge 0$ and hence
\eqref{eq:semistab} holds for $\lambda<\lambda^*$. 
Moreover, since $u^*$ is an increasing point-wise limit of $u_{\lambda},$ it also holds for $\lambda=\lambda^*$.

\end{proof}

\begin{lemma} \label{l:int5}
An extremal solution of \eqref{eq:app1} is classical.
\end{lemma}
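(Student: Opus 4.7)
\textbf{Proof plan for Lemma \ref{l:int5}.} The natural strategy is first to promote the weak solution $u^*$ produced in Lemma \ref{l:int2} to $L^\infty(B)$, after which Schauder theory delivers classicality. Since $u^*$ is the monotone pointwise limit of the radial, strictly decreasing classical solutions $u_\lambda$ as $\lambda\uparrow\lambda^*$, it is itself radial and non-increasing, so the only potential site of unboundedness is $r=0$. Once a bound $\|u_\lambda\|_{L^\infty(B)}\le C$ is established uniformly in $\lambda<\lambda^*$, monotone convergence gives $u^*\in L^\infty(B)$; the right-hand side of \eqref{eq:app1} is then bounded, $\mu\in C^{1,\omega}(\bar B)$ is uniformly positive, and standard Schauder estimates applied to the divergence-form equation upgrade $u^*$ to $C^{2,\omega}(\bar B)$, which is the desired classicality.

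For the uniform $L^\infty$ bound I would follow the Crandall--Rabinowitz / Nedev circle of ideas and exploit the semi-stability inequality \eqref{eq:semistab}. Insert the admissible test function $\eta=g(u_\lambda)$ with $g(0)=0$, so that $|\nabla\eta|^2=g'(u_\lambda)^2|\nabla u_\lambda|^2$; simultaneously multiply \eqref{eq:app1} by $h(u_\lambda):=\int_0^{u_\lambda}g'(s)^2\,ds$ and integrate by parts to obtain
\begin{equation*}
\int_B \mu\,g'(u_\lambda)^2|\nabla u_\lambda|^2\,dx=\lambda\int_B \mu\psi\,f(u_\lambda)\,h(u_\lambda)\,dx.
\end{equation*}
Substituting this identity into \eqref{eq:semistab} produces the integral inequality
\begin{equation*}
\int_B \mu\psi\,\bigl[f(u_\lambda)h(u_\lambda)-f'(u_\lambda)g(u_\lambda)^2\bigr]\,dx\ge 0.
\end{equation*}
A judicious choice of $g$ (for example $g(u)=\int_0^u\sqrt{f'(s)}\,ds$, possibly weighted by a small extra power of $f'$ to open a gap) converts this into a bound $\int_B \mu\psi\,\Phi(u_\lambda)\,dx\le C$ for some $\Phi$ that grows strictly faster than $f$ at infinity, uniformly in $\lambda<\lambda^*$. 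The integrability hypothesis \eqref{eq:2} forces $f$ to be superlinear, which is precisely what makes this gap available.

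The main obstacle is converting this weighted integral control into a pointwise one in the presence of the possibly degenerating weights $\mu\psi$. Two features of the setting make this tractable: first, $\mu(0)=\psi(0)=1$, and by the radial monotonicity of $u_\lambda$ from Lemma \ref{l:int1} the only possible blow-up is at $r=0$, where the weight $\mu\psi$ is uniformly bounded below; second, we work in dimension two, so Moser-type iteration applied to \eqref{eq:app1} efficiently converts $L^p$ bounds for large finite $p$ into $L^\infty$ bounds. Away from the origin, $u_\lambda(r)\le u_\lambda(r_0)$ is already controlled by the interior value and handled by standard local elliptic regularity. With $\|u_\lambda\|_{L^\infty(B)}\le C$ in hand, passing to the limit $\lambda\to\lambda^*$ via monotone convergence and applying the Schauder bootstrap described above completes the argument.
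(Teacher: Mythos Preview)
Your proposal is correct and follows essentially the same Crandall--Rabinowitz/Nedev route as the paper: insert $\eta=g(u_\lambda)$ into the semi-stability inequality \eqref{eq:semistab}, combine with the identity obtained by testing the equation against $\int_0^{u_\lambda} g'(s)^2\,ds$, and deduce an integral bound strong enough to bootstrap to $L^\infty$ in two dimensions. The only difference is that the paper makes the specific Nedev choice $g=\tilde f:=f-f(0)$ (so $g'=f'$), arrives directly at the inequality $\int_B f'(u_\lambda)\tilde f(u_\lambda)^2\,\psi\mu\,dx\le\int_B f(u_\lambda)\big(\int_0^{u_\lambda}(f')^2\big)\,\psi\mu\,dx$, and then cites \cite{nedev} verbatim, whereas your suggested $g(u)=\int_0^u\sqrt{f'}$ ``possibly weighted to open a gap'' is left somewhat vague---you would save yourself work by adopting Nedev's test function outright.
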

\begin{proof}
Taking $\tilde f(s)=f(s)-f(0)$ and setting $\eta=\tilde f(u_{\lambda})$ in \eqref{eq:semistab}
we have that 
\begin{eqnarray}\label{eq:appA}
\int_{B} |\nabla u_{\lambda}| \left(f^{\prime}(u_{\lambda})\right)^2 \mu dx\ge 
\lambda \int_{B}f^{\prime}(u_{\lambda}) \left(\tilde f (u_{\lambda})\right)^2 \mu \psi dx.
\end{eqnarray}
Next taking $g(t)=\int_0^t (f^{\prime}(s))^2 ds$, multiplying first equation in \eqref{eq:app1} by 
$g(u_{\lambda})$ and integrating the result by parts we have
\begin{eqnarray}\label{eq:appB}
\int_{B} |\nabla u_{\lambda}|^2 \left(f^{\prime}(u_{\lambda})\right)^2 \mu dx=
\lambda \int_{B} f(u_{\lambda}) g(u_{\lambda}) \psi \mu dx.
\end{eqnarray}
Combining \eqref{eq:appA}, \eqref{eq:appB}, we have
\begin{eqnarray}\label{eq:nedev}
\int_{B} f^{\prime} (u_{\lambda}) \left( \tilde f(u_{\lambda})\right)^2 \psi \mu dx\le
\int_{B} f(u_{\lambda}) g(u_{\lambda}) \psi \mu dx.
\end{eqnarray}
Using this inequality instead of Eq.(4) in \cite{nedev} and arguing exactly as in
\cite[Theorem 1]{nedev} we conclude that the extremal solution is classical.

\end{proof}

In this paper we  are mostly concerned with minimal solutions of problem \eqref{eq:1}, which
by Proposition \ref{p:1} are classical and radially symmetric. Therefore, we will only consider 
solutions of \eqref{eq:1}
that are radially symmetric.

To study radially symmetric  solutions, it is convenient to
introduce the following rescaling 
\begin{eqnarray}\label{eq:5}
u(r):=v(\sqrt{\alpha} r)=v(y).
\end{eqnarray}
Substituting \eqref{eq:5} into \eqref{eq:1} we get
\begin{eqnarray}\label{eq:6}
\left\{
\begin{array}{ll}
-v^{\prime\prime}-\left( \frac{1}{y}+y \varphi_{\alpha}(y)\right) v^{\prime}=
\frac{\lambda }{\alpha}\psi_{\alpha} \left(y\right) f(v), & 0<y<\sqrt{\alpha}, \\
 v^{\prime}(0)=0,& v(\sqrt{\alpha})=0.
\end{array}
\right.
\end{eqnarray}
Here and below, $(\cdot)^{\prime}=\frac{d}{d y}(\cdot)$ and
\begin{eqnarray}\label{eq:7}
\varphi_{\alpha}(y):=\varphi\left(\frac{y}{\sqrt{\alpha}}\right), \quad \psi_{\alpha}(y):=\psi\left(\frac{y}{\sqrt{\alpha}}\right).
\end{eqnarray}

We now set
\begin{eqnarray}\label{eq:8}
G(v):=\int_{v}^{\infty} \frac{ds}{f(s)}
\end{eqnarray}
and note that $G:[0,+\infty)\to (0,K]$ is a $C^2$ strictly monotone decreasing bijection and hence \eqref{eq:8} implicitly defines the inverse $G^{-1}:(0,K]\to [0,+\infty)$, which is also strictly monotone decreasing.
Differentiating  \eqref{eq:8}, we obtain
\begin{eqnarray} \label{eq:10}
 G^{\prime}=-\frac{v^{\prime}}{f(v)}, \quad
 G^{\prime\prime}=-\frac{v^{\prime\prime}}{f(v)}+\left(\frac{v^{\prime}}{f(v)}\right)^2 f_v(v)=
-\frac{v^{\prime\prime}}{f(v)}+\left( G^{\prime}\right)^2 f_v(v),
\end{eqnarray}
where
\begin{eqnarray}\label{eq:11}
f_v(v)=\frac{d f(v)}{d v}.
\end{eqnarray}
Combining  \eqref{eq:6} and \eqref{eq:10} yields
\begin{eqnarray}\label{eq:12}
\left\{
\begin{array}{ll}
G^{\prime\prime}+\left( \frac{1}{y}+y \varphi_{\alpha}\left(y\right)\right) G^{\prime}=
\frac{\lambda}{\alpha}\psi_{\alpha} \left(y\right) +\left( G^{\prime}\right)^2 f_v(v), & 0<y<\sqrt{\alpha}, \\
  G^{\prime}(0)=0, & G(\sqrt{\alpha})=K.
 \end{array}
\right.
\end{eqnarray}
where $v=G^{-1}(y)$ is implicitly defined by \eqref{eq:8}.

In what follows we will work with both \eqref{eq:6} and \eqref{eq:12} as  alternative versions
of \eqref{eq:1}. 

We next define super-solution for problem \eqref{eq:6} and sub-solution for \eqref{eq:12}.

A function
 $\bar v>0$  on $[0,\sqrt{\alpha})$ belonging to $C^2((0,\sqrt{\alpha}))\cap C([0,\sqrt{\alpha}])$
 is a  positive super-solution of \eqref{eq:6} provided it verifies
\begin{eqnarray}\label{eq:p1}
\left\{
\begin{array}{ll}
-\bar v^{\prime\prime}-\left( \frac{1}{y}+y \varphi_{\alpha}(y)\right) \bar v^{\prime}\ge
\frac{\lambda }{\alpha}\psi_{\alpha} \left(y\right) f(\bar v), & 0<y<\sqrt{\alpha}, \\
 \bar v^{\prime}(0)=0,& \bar v(\sqrt{\alpha})\ge 0.
\end{array}
\right.
\end{eqnarray}

Similarly, a function $\underline G>0 $  on $[0,\sqrt{\alpha}]$ belonging to $C^2((0,\sqrt{\alpha}))\cap C([0,\sqrt{\alpha}])$  is a positive classical sub-solution of \eqref{eq:12} if
 \begin{eqnarray}\label{eq:p2}
\left\{
\begin{array}{ll}
\underline G^{\prime\prime}+\left( \frac{1}{y}+y \varphi_{\alpha}\left(y\right)\right) \underline G^{\prime}\ge
\frac{\lambda}{\alpha}\psi_{\alpha} \left(y\right) +\left( \underline G^{\prime}\right)^2 f_v(v), & 0<y<\sqrt{\alpha} \\
   \underline G^{\prime}(0)=0, &\underline  G(\sqrt{\alpha})\le  K.
 \end{array}
\right.
\end{eqnarray}

The following results follow from Lemma \ref{l:int1}.
\begin{corollary} \label{l:1}
Assume \eqref{eq:6} has a  positive super-solution $\bar v$, then \eqref{eq:6} has a minimal  positive 
classical solution $v_{\lambda}$.
Moreover, $v_{\lambda}(y)\le \bar v(y)$ in $y\in[0,\sqrt{\alpha}]$
\end{corollary}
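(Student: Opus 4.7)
The plan is to reduce the statement to Lemma \ref{l:int1} by the rescaling \eqref{eq:5}, which is a one-to-one correspondence between positive radial classical solutions (resp.\ super-solutions) of \eqref{eq:app1} and positive classical solutions (resp.\ super-solutions) of \eqref{eq:6}.

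First, I would observe that the substitution \eqref{eq:5} is invertible: given $\bar v$ satisfying \eqref{eq:p1}, the function
\begin{equation*}
\bar u(x) := \bar v(\sqrt{\alpha}\,|x|), \qquad x\in\overline{B},
\end{equation*}
is well defined and belongs to $C^{2}(B\setminus\{0\})\cap C(\overline{B})$. The condition $\bar v^{\prime}(0)=0$ together with $\bar v\in C^2$ near $y=0$ ensures $\bar u$ extends to a $C^2$ radial function on $\overline{B}$, so $\bar u$ is a legitimate radial classical super-solution candidate for \eqref{eq:app1}. A direct chain-rule computation, which I would just sketch, shows that \eqref{eq:p1} is exactly the radial version of \eqref{eq:app2} (equivalently, the radial form of the differential inequality encoded by \eqref{eq:app1}); the boundary inequality $\bar v(\sqrt{\alpha})\ge 0$ yields $\bar u\ge 0$ on $\partial B$. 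Thus $\bar u$ is a positive classical super-solution of \eqref{eq:app1}.

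Second, I would invoke Lemma \ref{l:int1} to produce a minimal positive classical solution $u_\lambda\in C^{2,\omega}(\overline{B})$ of \eqref{eq:app1}, which is moreover radially symmetric, strictly decreasing, and satisfies $u_\lambda\le \bar u$ in $\overline{B}$. Setting
\begin{equation*}
v_\lambda(y) := u_\lambda\!\left(y/\sqrt{\alpha}\right), \qquad y\in[0,\sqrt{\alpha}],
\end{equation*}
and reversing the computation above, $v_\lambda$ is a positive classical solution of \eqref{eq:6}, and $v_\lambda(y)\le \bar v(y)$ on $[0,\sqrt{\alpha}]$ by construction.

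Third, I would verify minimality of $v_\lambda$. Any other positive classical solution $v$ of \eqref{eq:6} induces, via $u(x):=v(\sqrt{\alpha}\,|x|)$, a positive radial classical solution of \eqref{eq:app1}; the minimality statement in Lemma \ref{l:int1} gives $u_\lambda\le u$ in $B$, which translates back to $v_\lambda\le v$ on $[0,\sqrt{\alpha}]$. No genuine obstacle is expected here: the entire argument is a bookkeeping exercise on the change of variables, and the only point requiring mild care is ensuring that the regularity of $\bar u$ at the origin is compatible with the definition of super-solution in \eqref{eq:app2}, which follows from $\bar v\in C^2$ and $\bar v^{\prime}(0)=0$.
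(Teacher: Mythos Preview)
Your proposal is correct and is exactly the approach the paper intends: the paper states only that Corollary \ref{l:1} ``follows from Lemma \ref{l:int1}'', and your argument is the natural unpacking of that claim via the rescaling \eqref{eq:5}. One small caution: the super-solution definition \eqref{eq:p1} only assumes $\bar v\in C^2((0,\sqrt{\alpha}))\cap C([0,\sqrt{\alpha}])$ with $\bar v'(0)=0$, not $C^2$ up to $y=0$, so your sentence about ``$\bar v\in C^2$ near $y=0$'' slightly overstates the hypothesis; but the monotone iteration in Lemma \ref{l:int1} only needs the super-solution for the comparison step, and the paper's own definitions are equally informal on this point, so this does not affect the argument.
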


\begin{corollary}\label{l:2}
Assume that $\underline G $ is a positive  sub-solution of \eqref{eq:12}. Then, the 
function $\bar v$ implicitly defined by \eqref{eq:8} is a positive  super-solution
of \eqref{eq:6}.
\end{corollary}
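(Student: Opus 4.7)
The plan is to define $\bar v := G^{-1}(\underline G)$ pointwise on $[0,\sqrt{\alpha}]$ and verify, by direct substitution, that this function satisfies the super-solution inequality \eqref{eq:p1}. Because $G:[0,\infty)\to(0,K]$ is a $C^2$ strictly decreasing bijection (as noted just after \eqref{eq:8}), the composition $G^{-1}\circ \underline G$ is $C^2$ on the open interval and monotone in $\underline G$. Positivity of $\bar v$ on $[0,\sqrt{\alpha})$ follows from $\underline G<K$ there, while the boundary condition $\bar v(\sqrt{\alpha})\ge 0$ is immediate from $\underline G(\sqrt{\alpha})\le K$ and the monotonicity of $G^{-1}$.

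Next I would compute $\bar v'$ and $\bar v''$ by implicit differentiation of $G(\bar v)=\underline G$, using the formulas $G'=-1/f$ and $G''=f_v/f^2$ recorded in \eqref{eq:10}. A single differentiation gives $\bar v'=-f(\bar v)\,\underline G'$, so that $\bar v'(0)=0$ follows at once from $\underline G'(0)=0$, and a second differentiation produces
\begin{equation*}
\bar v''=f(\bar v)\,f_v(\bar v)(\underline G')^2-f(\bar v)\,\underline G''.
\end{equation*}

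The key step is to insert the sub-solution inequality \eqref{eq:p2}, with the crucial observation that the $v$ appearing in $f_v(v)$ on the right-hand side of \eqref{eq:p2} is, by the convention inherited from \eqref{eq:12}, exactly $\bar v = G^{-1}(\underline G)$. Replacing $\underline G''$ by its lower bound from \eqref{eq:p2} cancels the $f(\bar v)f_v(\bar v)(\underline G')^2$ term and yields
\begin{equation*}
\bar v'' \le \Bigl(\tfrac{1}{y}+y\varphi_{\alpha}(y)\Bigr)f(\bar v)\,\underline G' \;-\; \tfrac{\lambda}{\alpha}\psi_{\alpha}(y)\,f(\bar v).
\end{equation*}
Substituting $f(\bar v)\,\underline G'=-\bar v'$ and rearranging gives precisely the super-solution inequality in \eqref{eq:p1}.

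The main obstacle is nothing deep; it is purely a change-of-variables bookkeeping exercise. The only subtlety worth flagging is the matching between the nonlinear term $(\underline G')^2 f_v(v)$ in \eqref{eq:p2} and the nonlinear term one obtains upon differentiating $G(\bar v)=\underline G$ twice — this alignment is exactly what makes the Riccati-type contribution cancel and what converts the sub-solution inequality for $\underline G$ into the super-solution inequality for $\bar v$.
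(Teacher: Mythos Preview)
Your proposal is correct and is precisely the direct change-of-variables computation that the paper has in mind: the paper states Corollary~\ref{l:2} without proof, treating it as immediate from the derivation \eqref{eq:8}--\eqref{eq:12}, and your argument spells out exactly that derivation in reverse. The one small point worth noting is that your claim ``$\underline G<K$ on $[0,\sqrt{\alpha})$'' is not quite a consequence of the stated definition \eqref{eq:p2} alone, but it is implicit once one observes that the term $f_v(v)=f_v\bigl(G^{-1}(\underline G)\bigr)$ in \eqref{eq:p2} already requires $\underline G\in(0,K]$, and in the paper's only application (Lemma~\ref{l:4}) the sub-solution is strictly increasing with value $K$ at the endpoint.
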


\section{Asymptotic behavior  of $\lambda^*$.}

In this section we establish an asymptotic behavior of $\lambda^*(\alpha)$ for
sufficiently large $\alpha$ and give a proof of Theorem \ref{t:1}. 
 In what follows  we will work with \eqref{eq:12}, which is an alternative form of \eqref{eq:1}.

We start with the upper bound for $\lambda^*,$ which is given by the following lemma.

\begin{lemma}\label{l:3}
Assume \eqref{eq:1} has a solution. Then, for $\alpha$ sufficiently large, $\lambda^*(\alpha)$ obeys the following upper bound:
\begin{eqnarray}\label{eq:a14a}
\lambda^*(\alpha) \le \frac{2 K \alpha}{\log \alpha}\left(1+\frac{c}{\log{\alpha}}\right),
\end{eqnarray}
where $c>0$ is some constant independent of $\alpha.$
\end{lemma}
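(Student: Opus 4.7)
My plan is to test the rescaled equation \eqref{eq:12} against its natural integrating factor and reduce the bound to a one-dimensional Laplace-type asymptotic. Introduce
$$\mu_\alpha(y):=\exp\!\Big(\!\int_0^{y}\!s\varphi_\alpha(s)\,ds\Big),$$
so that $\tfrac{1}{y}+y\varphi_\alpha(y)=(y\mu_\alpha)'/(y\mu_\alpha)$ and \eqref{eq:12} rewrites in divergence form as
$$\big(y\mu_\alpha(y)G'(y)\big)' \;=\; y\mu_\alpha(y)\!\left[\tfrac{\lambda}{\alpha}\psi_\alpha(y)+(G'(y))^2 f_v(v)\right] \;\ge\; \tfrac{\lambda}{\alpha}\,y\mu_\alpha(y)\psi_\alpha(y),$$
since $f_v\ge 0$. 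Integrating once on $(0,y)$ using $G'(0)=0$, dividing by $y\mu_\alpha(y)$, and integrating again on $(0,\sqrt{\alpha})$ using $G(\sqrt{\alpha})=K$ and $G(0)\ge 0$, I obtain
$$K \;\ge\; \frac{\lambda}{\alpha}\,I(\alpha),\qquad I(\alpha):=\int_0^{\sqrt{\alpha}}\!\frac{1}{y\mu_\alpha(y)}\!\int_0^{y}\!s\mu_\alpha(s)\psi_\alpha(s)\,ds\,dy,$$
so the lemma is reduced to showing $I(\alpha)\ge\tfrac{1}{2}\log\alpha-C$ with $C$ independent of $\alpha$.

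For the inner integral, the identity $\mu_\alpha'(s)=s\varphi_\alpha(s)\mu_\alpha(s)$ together with $\varphi_\alpha\le 1$ and the monotonicity of $\psi_\alpha$ yields the clean estimate
$$\int_0^{y}\!s\mu_\alpha(s)\psi_\alpha(s)\,ds \;\ge\; \psi_\alpha(y)\!\int_0^{y}\!\mu_\alpha'(s)\,ds \;=\; \psi_\alpha(y)\big(\mu_\alpha(y)-1\big).$$
Substituting this and rescaling by $\rho=y/\sqrt{\alpha}$, which gives $\mu_\alpha(\sqrt{\alpha}\rho)=e^{\alpha\Phi(\rho)}$ with $\Phi(\rho):=\int_0^{\rho}s\varphi(s)\,ds$, I arrive at
$$I(\alpha) \;\ge\; \int_0^{1}\frac{\psi(\rho)\big(1-e^{-\alpha\Phi(\rho)}\big)}{\rho}\,d\rho.$$

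The remaining step is the sharp asymptotic analysis of this last integral, all of whose mass is concentrated near $\rho=0$. Since $\varphi(0)=\psi(0)=1$ and both functions are Lipschitz, I can fix a small $\epsilon>0$ and constants $L,c_0>0$ with $\Phi(\rho)\ge c_0\rho^2$ and $\psi(\rho)\ge 1-L\rho$ on $(0,\epsilon)$. The change of variable $u=\sqrt{c_0\alpha}\,\rho$ (which preserves the scale-invariant measure $d\rho/\rho=du/u$) reduces the principal part to the standard Laplace integral
$$\int_0^{\sqrt{c_0\alpha}\,\epsilon}\!\frac{1-e^{-u^2}}{u}\,du \;=\; \log(\sqrt{c_0\alpha}\,\epsilon)+O(1) \;=\; \tfrac{1}{2}\log\alpha+O(1),$$
while the Lipschitz correction from $\psi$ and the non-negative contribution from $\rho\in(\epsilon,1)$ are both $O(1)$. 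Combining these estimates,
$$\lambda \;\le\; \frac{K\alpha}{I(\alpha)} \;\le\; \frac{2K\alpha}{\log\alpha-2C} \;\le\; \frac{2K\alpha}{\log\alpha}\!\left(1+\frac{c}{\log\alpha}\right)$$
for $\alpha$ sufficiently large. The main technical difficulty is this final Laplace-type step: the coefficient $\tfrac{1}{2}$ in front of $\log\alpha$ must be recovered exactly, since any smaller value would give a bound asymptotically worse than the one claimed.
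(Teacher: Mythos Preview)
Your proof is correct and follows essentially the same route as the paper's: both write \eqref{eq:12} in divergence form via the weight $y\mu_\alpha(y)$, drop the nonnegative term $(G')^2 f_v$, integrate once to bound $G'$, use $\varphi\le 1$ together with the monotonicity and Lipschitz property of $\psi$ near the origin, and reduce everything to the elementary asymptotic $\int_0^{R}\frac{1-e^{-u^2}}{u}\,du=\log R+O(1)$ with $R\sim\sqrt{\alpha}$. The only cosmetic differences are that the paper stays in the $y$-variable and computes the inner integral explicitly as $1-e^{-y^2/2}$ (using $\varphi_\alpha\le 1$ inside the exponential), whereas you rescale to $\rho\in[0,1]$ and bound via $s\mu_\alpha(s)\ge\mu_\alpha'(s)$, arriving instead at $1-e^{-\alpha\Phi(\rho)}$ and then invoking $\Phi(\rho)\ge c_0\rho^2$; both routes recover the crucial $\tfrac12\log\alpha$.
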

\begin{proof}
First we observe that the first equation in \eqref{eq:12} can be rewritten in the divergence form
\begin{eqnarray}\label{eq:a1a}
\left( y\exp\left (\int_0^y s\varphi_{\alpha}(s)ds\right) G^{\prime}(y)\right)^{\prime}=
\left[\frac{\lambda}{\alpha} \psi_{\alpha}(y) + \left( G^{\prime}(y)\right)^2 f_v(v(y)) \right]y\exp\left(\int_0^y s\varphi_{\alpha}(s)ds\right).
\end{eqnarray}
Therefore,
\begin{eqnarray}\label{eq:a2}
\left( y\exp\left (\int_0^y s\varphi_{\alpha}(s)ds\right) G^{\prime}(y)\right)^{\prime}\ge
\frac{\lambda}{\alpha} y\psi_{\alpha}(y) \exp\left(\int_0^y s\varphi_{\alpha}(s)ds\right).
\end{eqnarray}
Integrating \eqref{eq:a2}  from $0$ to $y$  and using the boundary condition at zero in \eqref{eq:12}, we get
\begin{eqnarray}\label{eq:a3}
G^{\prime}(y)\ge \frac{\lambda}{\alpha} \frac{1}{y}\int_0^y z \psi_{\alpha}(z)
\exp \left( -\int_z^y s\varphi_{\alpha}(s) ds \right) dz.
\end{eqnarray}
Next observe that due to the monotonicity of $\psi$ and $\varphi$ we have
\begin{eqnarray}\label{eq:a4}
G^{\prime}(y)\ge  \frac{\lambda}{\alpha} \frac{\psi_{\alpha}(y)}{y} \int_0^y
z\exp\left(-\int_z^y sds\right) dz=
\frac{\lambda}{\alpha} \frac{\psi_{\alpha}(y)}{y}\left(1-\exp\left( -\frac{y^2}{2}\right)\right).
\end{eqnarray}
Since $\psi(0)=1$ and $\psi$ is Lipshitz continuous, we have that
\begin{eqnarray}\label{eq:a5}
\psi(x)\ge 1-\frac{x}{h} ,
\end{eqnarray} 
for some constant $0 <h \le 1$ independent of $\alpha$.
Therefore, 
\begin{eqnarray}\label{eq:a6}
\psi_{\alpha}(y) \ge 1-\frac{y}{h\sqrt{\alpha}}.
\end{eqnarray}
Consequently, \eqref{eq:a4} and \eqref{eq:a6} yield that for $y\in [0,h\sqrt{\alpha}]$
\begin{eqnarray}\label{eq:a7}
&& G^{\prime}(y)\ge 
\frac{\lambda}{\alpha} \frac{1}{y}\left(1-\exp\left( -\frac{y^2}{2}\right)\right)+
\frac{\lambda}{\alpha} \frac{(\psi_{\alpha}(y)-1)}{y}\left(1-\exp\left( -\frac{y^2}{2}\right)\right) \nonumber \\
&&\ge\frac{\lambda}{\alpha} \frac{1}{y}\left(1-\exp\left( -\frac{y^2}{2}\right)\right)-
\frac{\lambda}{h\alpha^{3/2}} \left(1-\exp\left( -\frac{y^2}{2}\right)\right).
\end{eqnarray} 
Integrating \eqref{eq:a7} from $0$ to $h\sqrt{\alpha},$ we obtain
\begin{eqnarray}\label{eq:a8}
G(h\sqrt{\alpha})-G(0) \ge \frac{\lambda}{\alpha} (J_1-J_2),
\end{eqnarray}
where 
\begin{eqnarray}\label{eq:a9}
J_1=\int_0^{h\sqrt{\alpha}} \left(1-\exp\left( -\frac{y^2}{2}\right)\right) \frac{dy}{y}, \qquad
J_2=\frac{1}{h \sqrt{\alpha}}\int_0^{h\sqrt{\alpha}}\left(1-\exp\left( -\frac{y^2}{2}\right)\right)dy.
\end{eqnarray}
Since $G(h \sqrt{\alpha})\le K$ and $G(0)> 0$ we have from \eqref{eq:a8} that
\begin{eqnarray}\label{eq:a10}
K \alpha\ge \lambda (J_1-J_2).
\end{eqnarray}
Next observe that the following estimates for $J_1$ and $J_2$ hold:
\begin{eqnarray}\label{eq:a11}
&& J_1= \int _1^{h\sqrt{\alpha}} \frac{dy}{y} -\int_1^{h\sqrt{\alpha}} \exp\left( -\frac{y^2}{2}\right)  \frac{dy}{y}+\int_0^1 \left(1-\exp\left( -\frac{y^2}{2}\right)\right) \frac{dy}{y} \nonumber \\
&& \ge  \int _1^{h\sqrt{\alpha}} \frac{dy}{y}-\int_1^{\infty} \exp\left( -\frac{y^2}{2}\right)  
\frac{dy}{y}=\frac{\log \alpha}{2}+\log h-c,
\end{eqnarray}
(here and below $c$ stands for a positive constant independent of $\alpha$)
and
\begin{eqnarray}\label{eq:a12}
J_2\le  \frac{1}{h \sqrt{\alpha}}\int_0^{h\sqrt{\alpha}}dy=1.
\end{eqnarray}
As a result of \eqref{eq:a11} and \eqref{eq:a12} we have that 
\begin{eqnarray}\label{eq:a13}
J_1-J_2\ge \frac{\log \alpha}{2}-c.
\end{eqnarray}
Combining \eqref{eq:a10} and \eqref{eq:a13}, we conclude that for \eqref{eq:12} to admit a solution,
we necessarily need
\begin{eqnarray}\label{eq:a14}
\lambda\le \frac{2K\alpha}{\log \alpha-c}\le
 \frac{2 K \alpha}{\log \alpha}\left(1+\frac{c}{\log{\alpha}}\right),
\end{eqnarray}
for $\alpha$ sufficiently large.

This observation immediately implies \eqref{eq:a14a},
which establishes an upper bound for $\lambda^*(\alpha)$. 
\end{proof}
To obtain a lower bound for $\lambda^*(\alpha)$ we need two technical lemmas, where the role of the parameter $w$ will be clarified later.

\begin{lemma} \label{l:4}
Let $\tilde G$ be the solution of the following linear problem:
\begin{eqnarray}\label{eq:a16}
\left\{
\begin{array}{ll}
\tilde G^{\prime\prime}+\left( \frac{1}{y}+y \varphi_{\alpha}\left(y\right)\right) \tilde G^{\prime}=
\beta_w \left(\psi_{\alpha} \left(y\right)+\frac{M_{\alpha}^2(y)}{\alpha}\right) , & 0<y<\sqrt{\alpha} \\
    \tilde G^{\prime}(0)=0, & \tilde G(\sqrt{\alpha})=K,
 \end{array}
\right.
\end{eqnarray}
with sufficiently large $\alpha$.
Here  
\begin{eqnarray} \label{eq:a15}
M_{\alpha}(t):=M\left(\frac{t}{\sqrt{\alpha}}\right),
\end{eqnarray}
with $M$  given by \eqref{eq:4},  $\varphi_{\alpha},\psi_{\alpha}$ given by \eqref{eq:7},
\begin{eqnarray}\label{eq:bw}
\beta_w=\frac{2(K-\eps_w)}{\log \alpha +c}, \qquad
\eps_w=\int_w^{\infty} \frac{ds}{f(s)},
\end{eqnarray}
where $w>0$ is an arbitrary  number and $c$ is a sufficiently large constant independent of $\alpha$.

Then, $\tilde G(y)$ is an increasing function on $y\in[0,\sqrt{\alpha}]$ and
\begin{eqnarray}
\tilde G(y) \ge \eps_w>0, \quad \mbox{on} \quad [0,\sqrt{\alpha}].
\end{eqnarray}
\end{lemma}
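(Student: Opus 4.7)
The plan is to exploit the divergence structure of the linear equation for $\tilde G$ to first establish monotonicity and then to derive a sharp upper bound on $\int_0^{\sqrt\alpha}\tilde G'(y)\,dy$, from which the lower bound $\tilde G\ge\eps_w$ follows. Setting $\Phi(y):=\int_0^y s\varphi_\alpha(s)\,ds$, the equation in \eqref{eq:a16} takes the divergence form
$$\bigl(y e^{\Phi(y)}\tilde G'(y)\bigr)'=\beta_w\, y e^{\Phi(y)}\bigl(\psi_\alpha(y)+M_\alpha^2(y)/\alpha\bigr).$$
Since the right-hand side is non-negative, integrating from $0$ to $y$ and using $\tilde G'(0)=0$ yields
$$\tilde G'(y)=\frac{\beta_w}{y e^{\Phi(y)}}\int_0^y z e^{\Phi(z)}\bigl(\psi_\alpha(z)+M_\alpha^2(z)/\alpha\bigr)dz\ge 0,$$
so $\tilde G$ is non-decreasing on $[0,\sqrt\alpha]$. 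Since $\tilde G(\sqrt\alpha)=K$, the bound $\tilde G\ge\eps_w$ reduces to $\tilde G(0)\ge\eps_w$, equivalently $\int_0^{\sqrt\alpha}\tilde G'(y)\,dy\le K-\eps_w=\beta_w(\log\alpha+c)/2$.

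To estimate the integral, the crucial identity is $(e^{\Phi(z)})'=z\varphi_\alpha(z)e^{\Phi(z)}$, which gives $z e^{\Phi(z)}\psi_\alpha(z)=p(z)(e^{\Phi(z)})'$ with $p(z):=\psi_\alpha(z)/\varphi_\alpha(z)$. Decomposing $p=1+(p-1)$ separates the leading contribution from the correction. The ``$1$''-part integrates to $e^{\Phi(y)}-1$, contributing
$$\int_0^{\sqrt\alpha}\beta_w\frac{1-e^{-\Phi(y)}}{y}\,dy\le\beta_w\frac{\log\alpha}{2}+O(\beta_w),$$
a bound obtained by arguments analogous to the $J_1$-estimate in the proof of Lemma~\ref{l:3}; crucially the coefficient of $\log\alpha/2$ is \emph{exactly} $1$. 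The $(p-1)$-part, after a Fubini rearrangement and a Laplace-type estimate $e^{\Phi(z)}\int_z^{\sqrt\alpha}dy/(ye^{\Phi(y)})\lesssim 1/(z^2\varphi_\alpha(z))$ (valid for $z$ bounded away from $0$; the range $z$ close to $0$ is handled directly using the Lipschitz regularity $|p(z)-1|\lesssim z/\sqrt\alpha$), is controlled by a constant multiple of $\beta_w\int_0^1|\tilde p(t)-1|/t\,dt$ with $\tilde p(t):=\psi(t)/\varphi(t)$. This integral is finite uniformly in $\alpha$: near $t=0$ the Lipschitz regularity of $\tilde p$ gives $|\tilde p(t)-1|\lesssim t$, while near $t=1$ one uses $|\tilde p(t)-1|\le 1+\tilde p(t)\le 1+M(t)$ together with $\int_0^1 M(t)\,dt<\infty$. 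The $M_\alpha^2/\alpha$-source contributes $O(\beta_w)$ by an analogous argument, the extra $1/\alpha$ factor providing additional smallness.

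Combining these estimates gives $\int_0^{\sqrt\alpha}\tilde G'(y)\,dy\le\beta_w\log\alpha/2+C\beta_w$ for some $C$ depending only on $\psi,\varphi,w$. Choosing $c\ge 2C$ in the definition of $\beta_w$ yields $\int_0^{\sqrt\alpha}\tilde G'(y)\,dy\le\beta_w(\log\alpha+c)/2=K-\eps_w$ for all $\alpha$ sufficiently large; hence $\tilde G(0)\ge\eps_w$, and by monotonicity $\tilde G(y)\ge\eps_w$ throughout $[0,\sqrt\alpha]$. The main technical obstacle is producing the main term with coefficient \emph{exactly} $1$ in front of $\beta_w\log\alpha/2$: any larger constant would force $c$ to depend on $\alpha$, contradicting the hypothesis. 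This is precisely why the decomposition $p=1+(p-1)$ is essential, as is the Laplace-type control of the correction that reduces it to the finite integral $\int_0^1|\tilde p(t)-1|/t\,dt$ via the integrability of $M$.
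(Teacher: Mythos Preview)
Your overall strategy is exactly that of the paper: put the equation in divergence form, read off $\tilde G'\ge 0$, and reduce the conclusion to showing $\int_0^{\sqrt\alpha}\tilde G'(y)\,dy\le\beta_w(\log\alpha+c)/2$ with the leading coefficient in front of $\beta_w\log\alpha/2$ equal to exactly~$1$. The identification of the main term via $(e^{\Phi})'=y\varphi_\alpha e^{\Phi}$ and the recognition that the correction must be $O(\beta_w)$ uniformly in $\alpha$ are both right.

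Where you diverge from the paper is in the treatment of the correction. You decompose $p=\psi_\alpha/\varphi_\alpha=1+(p-1)$ \emph{inside} the inner integral, swap the order of integration, and invoke a Laplace-type bound $e^{\Phi(z)}\int_z^{\sqrt\alpha}\frac{dy}{y e^{\Phi(y)}}\lesssim\frac{1}{z^2\varphi_\alpha(z)}$ to arrive at $\int_0^1|\tilde p(t)-1|/t\,dt$. The paper proceeds more elementarily: it uses the monotonicity of $M$ to bound $p(z)=\psi_\alpha(z)/\varphi_\alpha(z)\le M_\alpha(y)$ for $z\le y$, pulling $M_\alpha(y)$ outside the inner integral and obtaining the \emph{pointwise} estimate $\tilde G'(y)\le \frac{\beta_w}{y}M_\alpha(y)\bigl(1-e^{-y^2/2}\bigr)+\frac{m\beta_w}{\sqrt\alpha}M_\alpha(y)$. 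Only then is the decomposition $M_\alpha=1+(M_\alpha-1)$ applied to the \emph{outer} integral, and the $(M_\alpha-1)$ piece is handled by splitting $[1,\sqrt\alpha]$ at $k\sqrt\alpha$: Lipschitz continuity of $M$ gives $M_\alpha(y)-1\le ly/\sqrt\alpha$ on $[1,k\sqrt\alpha]$, while on $[k\sqrt\alpha,\sqrt\alpha]$ one simply uses $1/y\le 1/(k\sqrt\alpha)$ together with $\int_0^1 M<\infty$. No Laplace asymptotics are needed.

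Your route is viable, but the Laplace-type bound you quote is not entirely innocent: since $\varphi_\alpha$ is non-increasing, $\varphi_\alpha(z)$ is \emph{not} a lower bound for $\varphi_\alpha$ on $[z,\sqrt\alpha]$, so the standard Laplace heuristic has to be supplemented by a range-splitting argument to make the implied constant uniform in $\alpha$. The paper's approach sidesteps this by trading the exact ratio $p$ for its monotone majorant $M_\alpha$ at the outset, which is what makes the subsequent estimates entirely elementary.
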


\begin{proof}
Using computations identical to \eqref{eq:a1a}, we rewrite \eqref{eq:a16} in the divergence form. Integrating the result from $0$ to $y$ and using the boundary condition at zero
in \eqref{eq:a16}, we get
\begin{eqnarray}\label{eq:a17}
\tilde G^{\prime} (y)=
\frac{\beta_w}{y} \int_0^y z\psi_{\alpha}(z) \exp\left( -\int_z^y s\varphi_{\alpha}(s)ds\right) dz
+\frac{\beta_w}{\alpha y} \int_0^y zM_{\alpha}^2(z) \exp\left( -\int_z^y s\varphi_{\alpha}(s)ds\right) dz
=J_3+J_4.
\end{eqnarray}
Clearly $J_3, J_4>0$ and hence $\tilde G$ is strictly increasing. Therefore, we only need to
prove the positivity of $\tilde G(0)$. 

As a first step, we establish  upper bounds on $J_3$ and $J_4$. To do so, let
\begin{eqnarray}\label{eq:a18}
F(t):=\int_0^t s \varphi_{\alpha}(s)ds, 
\end{eqnarray}
and observe that 
\begin{eqnarray}\label{eq:a19}
F(t) \le \frac{t^2}{2},
\end{eqnarray}
\begin{eqnarray}\label{eq:a20}
&& \int_0^y z\varphi_{\alpha}(z) \exp\left( -\int_z^y s\varphi_{\alpha}(s)ds\right) dz=
\int_0^y z\varphi_{\alpha}(z) \exp\left( F(z)-F(y)\right) dz \nonumber \\
&&= 1-\exp\left(-F(y) \right)\le
1-\exp\left( -\frac{y^2}{2}\right),
\end{eqnarray}
and 
\begin{eqnarray}\label{eq:a21}
\int_0^y M_{\alpha} (z) dz=\int_0^y M\left(\frac{z}{\sqrt{\alpha}} \right)dz 
=\sqrt{\alpha}\int_0^{\frac{y}{\sqrt{\alpha}}} M(s) ds 
\le \sqrt{\alpha} \int_0^1 M(s)ds=m  \sqrt{\alpha},
\end{eqnarray}
where 
\begin{eqnarray}\label{eq:a22}
m:=\int_0^1 M(s)ds.
\end{eqnarray}

Using \eqref{eq:a19} -- \eqref{eq:a22} we obtain the following upper bounds on $J_3,J_4:$
\begin{eqnarray}\label{eq:a23}
&& J_3(y)=\frac{\beta_w}{y} \int_0^y \frac{\psi_{\alpha}(z)}{\varphi_{\alpha}(z)} z \varphi_{\alpha}(z) \exp\left( -\int_z^y s\varphi_{\alpha}(s)ds\right) dz \nonumber \\
&&\le 
\frac{\beta_w}{y} M_{\alpha}(y) \int_0^y z\varphi_{\alpha}(z) \exp\left( -\int_z^y s\varphi_{\alpha}(s)ds\right) dz
 \le \frac{\beta_w}{y} M_{\alpha}(y)\left( 1-\exp\left( -\frac{y^2}{2}\right) \right),
\end{eqnarray}
and
\begin{eqnarray}\label{eq:a24}
&& J_4\le \frac{\beta_w}{\alpha y} \int_0^y zM_{\alpha}^2(z)  dz \le 
\frac{\beta_w}{\alpha} \int_0^y M_{\alpha}^2(z)  dz \le \frac{\beta_w}{\alpha} M_{\alpha}(y) \int_0^yM_{\alpha}(z)dz  \le m  \frac{\beta_w}{\sqrt{\alpha}}M_{\alpha}(y).
\end{eqnarray}
Let us also note  that
$M(t)$ is Lipshitz continuous  on the interval $[0,k]$ for some $0<k\le 1$ independent of $\alpha$
as follows from  the properties of $\varphi$ and $\psi$ and the definition of $M$.  
Therefore, when   $y\in[0,k \sqrt{\alpha}],$
\begin{eqnarray}\label{eq:a26}
M_{\alpha}(y)\le 1+ l \frac{ y}{\sqrt{\alpha}},
\end{eqnarray}
for some constant $l \ge 0$ independent of $\alpha$.

Bounds \eqref{eq:a23}, \eqref{eq:a24}, \eqref{eq:a26} and the observations presented below allow one to estimate the difference
\begin{eqnarray}\label{eq:a25}
\tilde G(\sqrt{\alpha})-\tilde G(0)= \int_0^{\sqrt{\alpha}} \tilde G^{\prime} (y)dy= \int_0^{\sqrt{\alpha}} J_3(y)dy+
\int_0^{\sqrt{\alpha}} J_4(y) dy.
\end{eqnarray}
Indeed, we have
\begin{eqnarray}\label{eq:a27}
&& \int_0^1 \frac{M_{\alpha}(y)}{y}\left( 1-\exp\left( -\frac{y^2}{2}\right) \right)dy
\le M_{\alpha}(1) \int_0^1 \left( 1-\exp\left( -\frac{y^2}{2}\right) \right)\frac{dy}{y}
\le \left(1+\frac{l}{\sqrt{\alpha}}\right)c\le  c,
\end{eqnarray}
and
\begin{eqnarray}\label{eq:a28}
&&\int_1^{\sqrt{\alpha}} \left(M_{\alpha}(y)-1\right)\frac{dy}{y} =
\int_1^{k\sqrt{\alpha}} \left(M_{\alpha}(y)-1\right)\frac{dy}{y} +
\int_{k\sqrt{\alpha}}^{\sqrt{\alpha}}  \left(M_{\alpha}(y)-1\right)\frac{dy}{y} \nonumber\\
&& \le  \frac{l}{\sqrt{\alpha}} \int_1^{k\sqrt{\alpha}}  dy + \frac{1}{k\sqrt{\alpha}} \int_{k\sqrt{\alpha}}
^{\sqrt{\alpha}} M_{\alpha}(y)dy 
 \le l k +\frac{1}{k} \int_k^1 M(s)ds\le lk +\frac{m}{k}=c.
\end{eqnarray}
Thanks to estimates \eqref{eq:a27} and \eqref{eq:a28} we  have
\begin{eqnarray}\label{eq:a29}
&& \int_0^{\sqrt{\alpha}} J_3(y) dy\le \beta_w \int_0^1 \frac{M_{\alpha}(y)}{y}\left( 1-\exp\left( -\frac{y^2}{2}\right) \right)dy+\beta_w \int_1^{\sqrt{\alpha}} \frac{dy}{y}+\beta_w\int_1^{\sqrt{\alpha}} \left(M_{\alpha}(y)-1\right)\frac{dy}{y} \nonumber \\
&& \le \beta_w \left( \frac{\log{\alpha}}{2}+c\right),
\end{eqnarray}
and
\begin{eqnarray} \label{eq:a30}
\int_0^{\sqrt{\alpha}} J_4(y) dy \le m  \frac{\beta_w}{\sqrt{\alpha}} \int_0^{\sqrt{\alpha}}M_{\alpha}(y) dy
=m \beta_w \int_0^1 M(s) ds= m^2 \beta_w.
\end{eqnarray}
Combining  \eqref{eq:a25}, \eqref{eq:a29} and \eqref{eq:a30} we obtain
\begin{eqnarray}\label{eq:a31}
\tilde G(\sqrt{\alpha})-\tilde G(0)
\le \beta_w \left( \frac{\log{\alpha}}{2}+c\right).
\end{eqnarray}
That yields
\begin{eqnarray}
\tilde G(0) \ge K-\beta_w \left( \frac{\log{\alpha}}{2}+c\right).
\end{eqnarray}
Consequently, 
\begin{eqnarray}
\tilde G(0) \ge \eps_w. 
\end{eqnarray}
In view of the monotonicity of $\tilde G$ we then have that $\tilde G \ge \eps_w$ on 
$[0,\sqrt{\alpha}]$.

\end{proof}

\begin{lemma} \label{l:5}
Let $\alpha$ be sufficiently large  and let $w>0$ be an arbitrary number. Assume that 
\begin{eqnarray}\label{eq:ll}
\lambda \le \frac{2K\alpha}{\log \alpha}\left(1-\frac{\eps_w}{K}-c \frac{f_v(w)+1}{\log \alpha}\right),
\end{eqnarray}
where $\eps_w$ is as in Lemma \ref{l:4}.
Then problem \eqref{eq:1} admits a minimal positive strictly increasing  solution satisfying $G(0) \ge \eps_w$.
\end{lemma}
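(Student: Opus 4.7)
The plan is to construct a positive sub-solution $\underline G$ of the $G$-equation \eqref{eq:12}, so that Corollary \ref{l:2} produces a positive super-solution of \eqref{eq:6} and Corollary \ref{l:1} then delivers the desired minimal classical solution. The natural choice is $\underline G := \tilde G$, the solution of the linear auxiliary problem \eqref{eq:a16} with $\beta_w$ exactly as stipulated in Lemma \ref{l:4}. From that lemma, $\tilde G$ is already strictly increasing on $[0,\sqrt\alpha]$ and satisfies $\tilde G(y) \ge \eps_w$, so what is left to check is the sub-solution inequality itself.

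Since $G:[0,\infty)\to (0,K]$ is a strictly decreasing bijection with $G(w) = \eps_w$, the bound $\tilde G \ge \eps_w$ implies $v(y) := G^{-1}(\tilde G(y)) \le w$ on $[0,\sqrt\alpha]$. Convexity of $f$ then yields the pointwise control $f_v(v(y)) \le f_v(w)$, which is what allows us to freeze the nonlinearity at $w$. Using the PDE \eqref{eq:a16} satisfied by $\tilde G$, the sub-solution inequality \eqref{eq:p2} reduces to the pointwise estimate
\begin{equation*}
(\tilde G'(y))^2 f_v(w) \le \left(\beta_w - \frac{\lambda}{\alpha}\right)\psi_\alpha(y) + \beta_w\,\frac{M_\alpha^2(y)}{\alpha}.
\end{equation*}

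To verify this I would combine the pointwise bounds \eqref{eq:a23}--\eqref{eq:a24} from the proof of Lemma \ref{l:4} to obtain $(\tilde G')^2 \le 2\beta_w^2 M_\alpha^2(y)\bigl[(1-e^{-y^2/2})^2/y^2 + m^2/\alpha\bigr]$. The contribution involving $m^2/\alpha$ is absorbed by $\beta_w M_\alpha^2/\alpha$ as soon as $2\beta_w m^2 f_v(w) \le 1$, which is precisely the role of the $c(f_v(w)+1)/\log\alpha$ correction in the hypothesis on $\lambda$. The remaining term I would handle by splitting at $y=1$: for $y\le 1$ one uses $\psi_\alpha \ge 1/2$, $M_\alpha \to 1$, and the gap $\beta_w - \lambda/\alpha$ of size $\sim f_v(w)/(\log\alpha)^2$; for $y\ge 1$ the factor $(1-e^{-y^2/2})^2/y^2 \le 1/y^2$ is balanced against $\beta_w M_\alpha^2/\alpha$. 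Once \eqref{eq:p2} is established, Corollary \ref{l:2} produces the super-solution $\bar v := G^{-1}\circ \tilde G$ of \eqref{eq:6}, and Corollary \ref{l:1} yields the minimal solution $v_\lambda \le \bar v$; in particular $v_\lambda(0)\le w$, whence $G(v_\lambda(0)) \ge \eps_w$. The main obstacle is this pointwise verification: the slack $\beta_w - \lambda/\alpha$ is of the same order $f_v(w)/(\log\alpha)^2$ as the leading piece $\beta_w^2 f_v(w)$, leaving essentially no margin, so the constants $c$ in the definition of $\beta_w$ and in the hypothesis must be tuned jointly to keep the inequality strict.
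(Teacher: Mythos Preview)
Your overall strategy is exactly the paper's: take $\underline G=\tilde G$ from Lemma~\ref{l:4}, use $\tilde G\ge\eps_w$ to freeze $f_v$ at $w$, reduce \eqref{eq:p2} via \eqref{eq:a16} to a pointwise inequality, then invoke Corollaries~\ref{l:1}--\ref{l:2}. The gap is in your pointwise verification.

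Your proposed split at $y=1$ does not work. For $y\ge 1$ you want to balance $2\beta_w^2 f_v(w)\,M_\alpha^2(y)/y^2$ against $\beta_w M_\alpha^2(y)/\alpha$; this requires $y^2\ge 2\alpha\beta_w f_v(w)\sim c\,\alpha f_v(w)/\log\alpha$, so the balancing fails throughout the entire intermediate range $1\le y\lesssim\sqrt{\alpha/\log\alpha}$. On that range $\psi_\alpha$ is still bounded below, so the $(\beta_w-\lambda/\alpha)\psi_\alpha$ term must carry the load there too; your split point should be at scale $\sqrt{\alpha}$, not at $1$.

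The paper avoids this case analysis by observing that $(\tilde G')^2$ is pointwise controlled by the \emph{same weight} that drives \eqref{eq:a16}: from \eqref{eq:a23}--\eqref{eq:a24} one gets
\[
(\tilde G'(y))^2\le c\,\beta_w^2\Bigl(\psi_\alpha(y)+\tfrac{M_\alpha^2(y)}{\alpha}\Bigr)\qquad\text{on }[0,\sqrt\alpha],
\]
checked by splitting at $\tilde k\sqrt\alpha$ with $\tilde k=\min(k,h/2)$ (for $y\le\tilde k\sqrt\alpha$ everything is bounded and $\psi_\alpha\ge\tfrac12$; for $y\ge\tilde k\sqrt\alpha$ one has $((1-e^{-y^2/2})/y)^2\le 1/(\tilde k^2\alpha)$). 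Feeding this into the sub-solution inequality and cancelling the common factor $\psi_\alpha+M_\alpha^2/\alpha$ collapses everything to the single scalar condition
\[
\beta_w\bigl(1-c\,\beta_w f_v(w)\bigr)\ge\frac{\lambda}{\alpha},
\]
which is immediate from \eqref{eq:ll} and the definition of $\beta_w$. This also dissolves your worry about ``essentially no margin'': once the inequality is scalar, the constants in $\beta_w$ and in \eqref{eq:ll} need not be tuned against each other---one simply takes $c$ in \eqref{eq:ll} large enough.
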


\begin{proof}
We claim that $\tilde G$ constructed in Lemma \ref{l:4} is a sub-solution for problem \eqref{eq:12},
provided $\lambda$ satisfies \eqref{eq:ll}.
Indeed, 
using \eqref{eq:a17},  \eqref{eq:a23} and \eqref{eq:a24} we observe  that
\begin{eqnarray}\label{eq:a32}
\left(\tilde G^{\prime}(y)\right)^2\le c \beta_w^2 M_{\alpha}^2(y)\left( \frac{1}{\alpha}+
\left[\frac{1-\exp\left(-\frac{y^2}{2}\right)}{y}\right]^2\right).
\end{eqnarray}
In particular, this observation and \eqref{eq:a26} imply that
\begin{eqnarray}\label{eq:a33}
\left(\tilde G^{\prime}(y)\right)^2\le c \beta_w^2\le c \beta_w^2
\left(\psi_{\alpha}(y) + \frac{M_{\alpha}^2(y)}{\alpha}\right) \quad \mbox{for} \quad y\in[0,\tilde k\sqrt{\alpha}],
\end{eqnarray}
and 
\begin{eqnarray}\label{eq:a34}
\left(\tilde G^{\prime}(y)\right)^2\le c \beta_w^2  \frac{M_{\alpha}^2(y)}{\alpha} \le c \beta_w^2
\left(\psi_{\alpha}(y) + \frac{M_{\alpha}^2(y)}{\alpha}\right) \quad \mbox{for} \quad y\in[\tilde k\sqrt{\alpha},\sqrt{\alpha}],
\end{eqnarray}
where $\tilde k=\min[k, h/2]$ and $k,h$ are as in Lemmas \ref{l:4}, \ref{l:3} respectively.
Therefore,
\begin{eqnarray}\label{eq:a35}
\left(\tilde G^{\prime}(y)\right)^2 \le c \beta_w^2
\left(\psi_{\alpha}(y) + \frac{M_{\alpha}^2(y)}{\alpha}\right) \quad \mbox{for} \quad y\in[0,\sqrt{\alpha}].
\end{eqnarray}

Next we define implicitly $\tilde v(y)$ by the following formula
\begin{eqnarray}
\tilde G (y)=\int_{\tilde v(y)}^{\infty} \frac{ds}{f(s)},
\end{eqnarray} 
i.e. $\tilde v(y)=G^{-1}(\tilde G(y))$ where $G^{-1}:(0,K]\to[0,\infty)$ is implicitly defined by \eqref{eq:8}.
 Since $G^{-1}$ is decreasing and $\tilde G(y)$ is an increasing function of $y$ we have that $\tilde v(y)$ is a decreasing function
 of $y$. In view of this observation and an assumption that  $f$ is convex and increasing  we have
\begin{eqnarray}\label{eq:a37}
f_v(\tilde v(y) )\le f_v(w) \quad \mbox{for} \quad y\in[0,\sqrt{\alpha}].
\end{eqnarray}
Therefore, $\tilde G$ will be a sub-solution for \eqref{eq:12}, provided that
\begin{eqnarray}\label{eq:a38}
\tilde G^{\prime\prime}+\left( \frac{1}{y}+y \varphi_{\alpha}\left(y\right)\right) \tilde G^{\prime}\ge
\frac{\lambda}{\alpha}\psi_{\alpha} \left(y\right) +\left( \tilde G^{\prime}\right)^2 f_v(w), & 0<y<\sqrt{\alpha}.
\end{eqnarray}
Using \eqref{eq:a16} and \eqref{eq:a35}, we observe that this condition is automatically satisfied
if
\begin{eqnarray}\label{eq:a39}
\beta_w \left(\psi_{\alpha} \left(y\right)+\frac{M_{\alpha}^2(y)}{\alpha}\right) \ge \frac{\lambda}{\alpha} \psi_{\alpha}(y) +c \beta_w^2 f_v(w) \left(\psi_{\alpha}(y) + \frac{M_{\alpha}^2(y)}{\alpha}\right)
\quad 0<y<\sqrt{\alpha}, 
\end{eqnarray}
which  is in turn satisfied when
\begin{eqnarray}\label{eq:a40}
\beta_w(1-c\beta_w f_v(w))\ge \frac{\lambda}{\alpha}.
\end{eqnarray}
Straightforward computations show that \eqref{eq:a40} holds for all values of 
$\lambda$ satisfying \eqref{eq:ll}.
Consequently, for $\lambda$ satisfying \eqref{eq:ll}, problem \eqref{eq:12} admits a positive  sub-solution
and thus by Corollaries \ref{l:1}, \ref{l:2} problem \eqref{eq:6} and hence problem \eqref{eq:1} admits a minimal solution.
\end{proof}

An immediate consequence of this lemma is the following corollary.
\begin{corollary}\label{c:1}
\begin{eqnarray}\label{eq:lb}
\lambda^*(\alpha) \ge \sup_{w\in(0,\infty)}\frac{2K\alpha}{\log \alpha}\left(1-\frac{\eps_w}{K}-c \frac{f_v(w)+1}{\log \alpha}\right).
\end{eqnarray}
where $w$ and $\eps_w$ are as in Lemma \ref{l:4}. 
 \end{corollary}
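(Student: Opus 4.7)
The plan is to derive the stated inequality as an essentially immediate consequence of Lemma \ref{l:5}, combined with the defining property of $\lambda^*(\alpha)$ given by Proposition \ref{p:1}(iv). First I would fix an arbitrary $w \in (0,\infty)$ and denote the right-hand side quantity by
\[
\Lambda_w := \frac{2K\alpha}{\log \alpha}\left(1 - \frac{\eps_w}{K} - c\,\frac{f_v(w)+1}{\log \alpha}\right),
\]
with the same constant $c$ as in Lemma \ref{l:5}. If $\Lambda_w \le 0$, the desired inequality $\lambda^*(\alpha) \ge \Lambda_w$ holds trivially, since Proposition \ref{p:1} guarantees $\lambda^*(\alpha) > 0$. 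Otherwise, taking $\lambda = \Lambda_w$ in Lemma \ref{l:5} (for $\alpha$ sufficiently large), the equality case of condition \eqref{eq:ll} is satisfied and the lemma produces a minimal positive classical solution of problem \eqref{eq:1} at this value of $\lambda$. By Proposition \ref{p:1}(iv), existence of a solution forces $\Lambda_w \le \lambda^*(\alpha)$.

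Since the inequality $\lambda^*(\alpha) \ge \Lambda_w$ holds for every $w \in (0,\infty)$, taking the supremum over $w$ immediately yields \eqref{eq:lb}. I do not anticipate any serious obstacle: all of the substantive work — the construction of the explicit sub-solution $\tilde G$ to the linear problem \eqref{eq:a16} in Lemma \ref{l:4}, the estimates \eqref{eq:a23}--\eqref{eq:a35} on $\tilde G'$, and the verification that $\tilde G$ is a sub-solution of \eqref{eq:12} under \eqref{eq:ll} — has already been performed. The corollary simply crystallises that construction into a lower bound by recording that the free parameter $w$ was arbitrary and may therefore be optimised.
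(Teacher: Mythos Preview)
Your argument is correct and is precisely the reasoning the paper has in mind: the corollary is stated there as ``an immediate consequence'' of Lemma \ref{l:5}, with no further proof given, and your write-up simply spells out that for each $w$ the existence of a solution at $\lambda=\Lambda_w$ forces $\Lambda_w\le\lambda^*(\alpha)$ by Proposition \ref{p:1}(iv), whence one may take the supremum over $w$.
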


We now can proceed to the proof of Theorem \ref{t:1}.

\begin{proof}[Proof of Theorem \ref{t:1}]
The statement of the theorem follows directly form Lemma \ref{l:3} and Corollary \ref{c:1}.
First we observe that by \eqref{eq:a14a}
\begin{eqnarray}\label{eq:a50}
\limsup_{\alpha\to\infty} \frac{ \lambda^*(\alpha)\log \alpha}{\alpha} \le 2K.
\end{eqnarray} 
On the other hand
given $\eps>0$ arbitrary small, we can choose $w$ sufficiently large so that  $\eps_w/K<\eps/2$.
This observation together with \eqref{eq:lb} gives
\begin{eqnarray}\label{eq:a51}
\liminf_{\alpha\to\infty} \frac{ \lambda^*(\alpha)\log \alpha}{\alpha} \ge 2(K-\eps).
\end{eqnarray} 
In view of the fact that $\eps$ is arbitrarily small, \eqref{eq:a50} and \eqref{eq:a51} give
\eqref{eq:a1}.
\end{proof}

\section{ Asymptotic behavior of the extremal solution as $\alpha\to\infty.$}

In this section we give a proof of Theorem \ref{t:2}. For convenience we split the
section into two parts dealing with local and integral properties of the extremal solution respectively, that
is, with parts one and two of Theorem \ref{t:2}. In this section we will use \eqref{eq:6} as 
an alternative version of \eqref{eq:1}.

\subsection{Local properties of the extremal solution}
In this subsection we give a proof of the first part of Theorem \ref{t:2}. The proof is based on the
following two lemmas.

\begin{lemma}\label{l:10}
Assume that
 \begin{eqnarray}\label{eq:d0}
  \vartheta <\frac{\lambda}{\lambda^*}\le1 \quad \mbox{with} \quad 0<\vartheta \le 1
 \end{eqnarray} 
 Then, any radial solution of \eqref{eq:1} satisfies for any $\frac{1}{4}>\delta>0$
the following upper bound:
\begin{eqnarray}\label{eq:d1}
u (x)\le c\left(\frac{1+\log(1/|x|)}{\log {\alpha}} \right), \quad  \quad |x|\in\left [\frac{1}{\alpha^{\frac12-2\delta}},1\right],
\end{eqnarray}
where constant $c=c(\vartheta,\delta)>0$ is independent of $\alpha.$ 
\end{lemma}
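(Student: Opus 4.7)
The plan is to convert the desired bound into a comparison with an explicit super-solution of the rescaled equation \eqref{eq:6}, constructed from the sub-solution $\tilde G$ of the $G$-equation that was already analyzed in Lemma \ref{l:4}. First I would pass to the rescaling $v(y)=u(y/\sqrt\alpha)$ of \eqref{eq:5}; the annulus $|x|\in[\alpha^{-\frac12+2\delta},1]$ becomes $y\in[y_\delta,\sqrt\alpha]$ with $y_\delta:=\alpha^{2\delta}$, and the claim reduces to
\[
v(y)\le c\,\frac{1+\log(\sqrt\alpha/y)}{\log\alpha},\qquad y\in[y_\delta,\sqrt\alpha].
\]
By Theorem \ref{t:1} I may use $\lambda\le\lambda^*(\alpha)\le\frac{2K\alpha}{\log\alpha}(1+o(1))$, and by hypothesis $\lambda>\vartheta\lambda^*(\alpha)$.

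Next I would select $w=w(\alpha,\vartheta)$ growing slowly with $\alpha$ (for instance so that both $\epsilon_w$ and $f_v(w)/\log\alpha$ are $o(1)$) in such a way that the threshold of Lemma \ref{l:5} satisfies $\lambda_0(w,\alpha)/\lambda^*(\alpha)\to 1$, and in particular $\lambda_0\ge\vartheta\lambda^*$ for $\alpha$ large. With this choice, $\tilde G$ from Lemma \ref{l:4} is a sub-solution of the $G$-equation, so by Corollary \ref{l:2} the function $\bar v:=G^{-1}(\tilde G)$ is a super-solution of \eqref{eq:6} and by Corollary \ref{l:1} the minimal solution obeys $v\le\bar v$ throughout $[0,\sqrt\alpha]$.

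Then I would extract the key pointwise estimate $K-\tilde G(y)\le C(1+\log(\sqrt\alpha/y))/\log\alpha$ on $[y_\delta,\sqrt\alpha]$ by integrating the bounds \eqref{eq:a23}, \eqref{eq:a24} on $\tilde G'=J_3+J_4$ from $y$ to $\sqrt\alpha$. The dominant contribution is $\beta_w\int_y^{\sqrt\alpha}M_\alpha(s)/s\,ds$; substituting $t=s/\sqrt\alpha$ and splitting $[y/\sqrt\alpha,1]=[y/\sqrt\alpha,k]\cup[k,1]$ for some $k<1$, the first piece is bounded by $M(k)\log(\sqrt\alpha/y)$ and the tail by $k^{-1}\int_k^1 M(t)\,dt$ using \eqref{eq:3}; the second integral contributes only $\beta_w m^2$. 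Since $\beta_w=O(1/\log\alpha)$ with implicit constant uniform in $w$, the displayed estimate follows. Converting back through $v(y)\le G^{-1}(\tilde G(y))$, and noting that on the annulus $h(y):=K-\tilde G(y)$ stays bounded away from $K$, so the quotient $G^{-1}(K-h)/h$ is continuous and bounded on $h\in[0,H_0]$ (with limit $f(0)$ at $h=0$ since $G'(0)=-1/f(0)$), one concludes $v(y)\le c\,h(y)$, which is the claim.

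The main obstacle is that the range of $\lambda$ covered by Lemma \ref{l:5} is strictly below $\lambda^*(\alpha)$, so $\tilde G$ is not literally a sub-solution for every $\lambda\in(\vartheta\lambda^*,\lambda^*]$. The cure is the $\alpha$-dependent choice of $w$ above, which makes $\lambda_0(w,\alpha)$ approach $\lambda^*(\alpha)$; for the endpoint $\lambda=\lambda^*$ one passes to the limit via $u^*=\lim_{\lambda\uparrow\lambda^*}u_\lambda$ using that all constants in the derivation are independent of $\lambda$ within the relevant range. A second technical point is ensuring that the logarithmic tail estimate of $\int M_\alpha/s\,ds$ survives even when $M$ may blow up near $r=1$, which is precisely where hypothesis \eqref{eq:3} is used.
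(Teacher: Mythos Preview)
Your approach via the explicit super-solution $\bar v=G^{-1}(\tilde G)$ is genuinely different from the paper's and is essentially sound \emph{for minimal solutions}: the pointwise estimate $K-\tilde G(y)\le C(1+\log(\sqrt\alpha/y))/\log\alpha$ does follow from integrating \eqref{eq:a23}--\eqref{eq:a24}, the constants are uniform in $w$ since $\beta_w\le 2K/\log\alpha$, and the conversion through $G^{-1}$ works because $\tilde G(y)\ge cK\delta$ on $[\alpha^{2\delta},\sqrt\alpha]$ (your integral estimate gives $K-\tilde G(\alpha^{2\delta})\le (1-4\delta)K+o(1)$). Coverage of the full range $(\vartheta\lambda^*,\lambda^*]$ is then obtained from monotonicity $u_\lambda\le u^*$ together with the limit argument you sketch.

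The gap is that the lemma is stated for \emph{any} radial solution of \eqref{eq:1}, not only for minimal ones, and this generality is explicitly stressed in the remark following the lemma. Corollary~\ref{l:1} only places the minimal solution below $\bar v$; a non-minimal branch need not lie under any super-solution you construct, so your comparison argument simply does not apply to it. The paper avoids this by working directly with the ODE satisfied by an arbitrary radial solution: it derives the identity $-yv'(y)=\tfrac{\lambda}{\alpha}I(y)$, proves a two-sided comparability $I(y_2)\le cM_\alpha(y_2)I(y_1)$ for $y_2>y_1\ge 1$ (using only that $f(v(\cdot))$ is decreasing), shows $v(\alpha^\delta)\le c$ from the lower bound \eqref{eq:a7} on $G'$, and then bootstraps: the comparability forces $D:=-\alpha^{2\delta}v'(\alpha^{2\delta})\le c/\log\alpha$, hence $-yv'(y)\le cM_\alpha(y)/\log\alpha$ for $y\ge\alpha^{2\delta}$, and integrating yields \eqref{eq:d1}. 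None of these steps invokes minimality.

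If you only need the lemma for the extremal solution (as in the proof of Theorem~\ref{t:2}), your route is a legitimate and arguably cleaner alternative. To prove the lemma as stated you would have to abandon the super-solution comparison and argue directly from the equation, as the paper does.
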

\begin{proof}
First we observe that \eqref{eq:6} can be rewritten in the divergence form
\begin{eqnarray}\label{eq:d2}
-\left( y\exp\left(\int_0^y s\varphi_{\alpha}(s)ds\right) v^{\prime}(y)\right)^{\prime}=\frac{\lambda}{\alpha}
y \exp\left(\int_0^y s\varphi_{\alpha}(s)ds\right) \psi_{\alpha}(y) f(v(y)). 
\end{eqnarray}
Integrating this equation and taking into account the boundary condition at zero in \eqref{eq:6},
we get
\begin{eqnarray}\label{eq:d3}
-y v^{\prime}(y)=\frac{\lambda}{\alpha} I(y),
\end{eqnarray}
where 
\begin{eqnarray}\label{eq:d4}
I(y)=\int_0^y f(v(z)) z \psi_{\alpha}(z) \exp\left(-\int_z^y s\varphi_{\alpha}(s)ds \right)dz=
\exp(-F(y)) \int_0^y f(v(z)) z \psi_{\alpha}(z) \exp\left( F(z) \right) dz,
\end{eqnarray}
and $F(t)$ is defined in \eqref{eq:a18}.

We  claim that
\begin{eqnarray}\label{eq:d5}
I(y_2) \le c M_{\alpha}(y_2) I(y_1) \quad \mbox{for} \quad y_2>y_1, \quad y_1\in [1,q\sqrt{\alpha}], 
\end{eqnarray}
with  some $0<q\le1$ independent of $\alpha.$

To prove this claim we observe that
\begin{eqnarray}\label{eq:d6}
I(y_2)= \exp\left(F(y_1)-F(y_2)\right) I(y_1)+\exp\left(-F(y_2)\right) \int_{y_1}^{y_2} f(v(z)) \frac{\psi_{\alpha}(z)}{\varphi_{\alpha}(z)} z \varphi_{\alpha}(z) \exp\left(F(z) \right) dz. 
\end{eqnarray}
Since $F(t)$ and $M_{\alpha}(t)\ge 1$ are increasing and $f(v(t))$ is decreasing
on $[0,\sqrt{\alpha}],$ using calculations similar to  \eqref{eq:a20}, we obtain
\begin{eqnarray}\label{eq:d7}
&& I(y_2)\le  I(y_1)+\exp\left(-F(y_2)\right) f(v(y_1)) M_{\alpha}(y_2)  \int_{y_1}^{y_2}  z \varphi_{\alpha}(z) \exp\left(F(z) \right) dz \le \nonumber \\
&& I(y_1)+f(v(y_1)) M_{\alpha}(y_2) \left(1-\exp \left(F(y_2)-F(y_1)\right) \right) \le I(y_1)+f(v(y_1)) M_{\alpha}(y_2).
\end{eqnarray}
Moreover, since $\varphi,\psi$ are Lipshitz continuous and $\varphi(0)=\psi(0)=1$ we can choose $q>0$ independent
of $\alpha$ such that
\begin{eqnarray}\label{eq:mover}
\frac{\psi_{\alpha}(y)}{\varphi_{\alpha}(y)}\ge \frac12, \quad \varphi_{\alpha}(y)\ge \frac 12 \quad\mbox{for}\quad y\le q\sqrt{\alpha}.
\end{eqnarray}
Therefore, as follows from \eqref{eq:mover} and \eqref{eq:d4}
\begin{eqnarray}\label{eq:d8}
&& I(y_1)\ge f((v(y_1)) \exp(-F(y_1))\int_0^{y_1} z \psi_{\alpha}(z) \exp(F(z))dz \ge \nonumber \\
&& \frac12  f((v(y_1)) \exp(-F(y_1))\int_0^{y_1} z \phi_{\alpha}(z) \exp(F(z))dz=
\frac12 f(v(y_1)) \left(1-\exp(-F(y_1))\right)\ge c f(v(y_1)),
\end{eqnarray}
provided $y_1<q\sqrt{\alpha}$.
Combining \eqref{eq:d7} and \eqref{eq:d8}, we get \eqref{eq:d5}.

Now we fix $\delta>0$ sufficiently small. We claim that  for large enough $\alpha,$
\begin{eqnarray}\label{eq:d9}
v(\alpha^{\delta})\le c,
\end{eqnarray}
where $c$ depends only  on $\vartheta$ and $\delta.$
To show that, we integrate \eqref{eq:a7}  from zero to $\alpha^{\delta}$ and arguing as in Lemma
\ref{l:3} obtain 
\begin{eqnarray}\label{eq:d10}
G(\alpha^{\delta})\ge \frac{\lambda}{\alpha}\left( \delta \log \alpha-c\right).
\end{eqnarray}
Since  for $\alpha$ large enough, 
\begin{eqnarray}\label{eq:d11}
\lambda^*\ge \frac{K\alpha}{\log \alpha},
\end{eqnarray}
we have that for $\lambda$ satisfying \eqref{eq:d0}
\begin{eqnarray}\label{eq:d12}
G(\alpha^{\delta}) \ge \frac{\vartheta \delta K}{2}.
\end{eqnarray}
Using definition of G (see \eqref{eq:8}), we have
\begin{eqnarray}\label{eq:d13}
\frac{1}{K} \int_{v(\alpha^{\delta})}^\infty \frac{d s}{f(s)} \ge \frac{\vartheta \delta}{2}>0.
\end{eqnarray}
Since the integral in \eqref{eq:d13} is bounded from below away from zero independently of $\alpha,$ we conclude
that the lower limit in this integral is bounded from above, which gives \eqref{eq:d9}.

Next, taking $y_2=\alpha^{2\delta},$ we have from \eqref{eq:d5} that
\begin{eqnarray}\label{eq:d14}
I(\alpha^{2\delta} )\le c M_{\alpha}(\alpha^{2\delta}) I(y), \qquad \alpha^{\delta}\le y \le \alpha^{2\delta}.
\end{eqnarray}

Since $1\le M_{\alpha}(\alpha^{2\delta}) <c,$ we have
\begin{eqnarray}\label{eq:d15}
I(y) \ge c I(\alpha^{2\delta}), \qquad \alpha^{\delta}\le y \le \alpha^{2\delta},
\end{eqnarray}
and hence by \eqref{eq:d3}
\begin{eqnarray}\label{eq:d16}
-yv^{\prime}(y) \ge -c \alpha^{2\delta} v^{\prime}(\alpha^{2\delta})=cD,
\end{eqnarray}
where
\begin{eqnarray}\label{eq:d16a}
D=- \alpha^{2\delta} v^{\prime}(\alpha^{2\delta}).
\end{eqnarray}
Integrating  inequality \eqref{eq:d16}, we obtain
\begin{eqnarray}\label{eq:d17}
v(\alpha^{\delta})-v(\alpha^{2\delta})= -\int_{\alpha^{\delta}}^{\alpha^{2\delta}} v^{\prime}(y)dy
\ge D\int_{\alpha^{\delta}}^{\alpha^{2\delta}} \frac{dy}{y}=c \delta D \log \alpha.
\end{eqnarray}
Therefore, from \eqref{eq:d9} and \eqref{eq:d17} we have
\begin{eqnarray}\label{eq:d18}
D\le \frac{c}{\log \alpha}.
\end{eqnarray}
On the other hand, from \eqref{eq:d5} with $y_1=\alpha^{2\delta}$ and $y_2=y$ we have
\begin{eqnarray}\label{eq:d19}
I(y)\le c M_{\alpha}(y) I(\alpha^{2\delta}), \quad y\ge \alpha^{2\delta},
\end{eqnarray}
and, therefore, by \eqref{eq:d3}, \eqref{eq:d16a}, \eqref{eq:d18} and \eqref{eq:d19} 
\begin{eqnarray}\label{eq:d20}
-yv^{\prime}(y) \le c M_{\alpha}(y) \left( -\alpha^{2\delta} v^{\prime}(\alpha^{2\delta})\right)=
c D M_{\alpha}(y) \le \frac{c}{\log \alpha} M_{\alpha}(y).
\end{eqnarray}
Thus, for $y\ge \alpha^{{2\delta}},$ we have
\begin{eqnarray}\label{eq:d21}
-v^{\prime}(y)\le \frac{c}{\log \alpha} \frac{M_{\alpha}(y)}{y}.
\end{eqnarray}
Integrating \eqref{eq:d21}, we then obtain
\begin{eqnarray}\label{eq:d22}
v(y)=-\int_y^{\sqrt{\alpha}} v^{\prime}(y)dy \le \frac{c}{\log \alpha} \int_y^{\sqrt{\alpha}} M_{\alpha}(y)\frac{dy}{y}.
\end{eqnarray}
Now using \eqref{eq:a15},\eqref{eq:a21}, \eqref{eq:a22} and \eqref{eq:a26} we
estimate the right hand side of \eqref{eq:d22}. First assume that $y\ge k \sqrt{\alpha},$ 
where $k$ is as in Lemma \ref{l:4}. Then
\begin{eqnarray}\label{eq:d23}
\int_y^{\sqrt{\alpha}} M_{\alpha}(y) \frac{dy}{y} \le \int_{k\sqrt{\alpha}}^{\alpha} M_{\alpha}(y) \frac{dy}{y} 
\le \frac{1}{k\sqrt{\alpha}} \int_{k\sqrt{\alpha}}^{\sqrt{\alpha}} M_{\alpha}(y)dy\le
\frac{1}{k} \int_k^1 M(t)dt\le \frac{m}{k}<c.
\end{eqnarray}
Next, when $y<k\sqrt{\alpha},$ we have
\begin{eqnarray}\label{eq:d24}
&&\int_y^{\sqrt{\alpha}} M_{\alpha}(y) \frac{dy}{y}=\int_{k\sqrt{\alpha}}^{\sqrt{\alpha}} M_{\alpha}(y) \frac{dy}{y}+\int_y^{k\sqrt{\alpha}} M_{\alpha}(y) \frac{dy}{y}\le c+
\int_y^{k\sqrt{\alpha}} \left(1+l \frac{y}{\sqrt{\alpha}}\right) \frac{dy}{y} \le \nonumber \\
&& c+\int_y^{k\sqrt{\alpha}} \frac{dy}{y}+\frac{l}{\sqrt{\alpha}} \int_y^{k\sqrt{\alpha}} dy
= c+\log(k \sqrt{\alpha})-\log(y)+lk -l\frac{y}{\sqrt{\alpha}} \le c +\log\left( \frac{\sqrt{\alpha}}{y}\right).
\end{eqnarray}
Combining \eqref{eq:d22}, \eqref{eq:d23} and \eqref{eq:d24}, we get
\begin{eqnarray}\label{eq:x10}
v(y) \le \frac{c}{\log \alpha} \left( 1+\log \left( \frac{\sqrt{\alpha}}{y}\right)\right), \quad y\ge \alpha^{2\delta}.
\end{eqnarray}
The latter inequality in terms of the original (unscaled) variables (see \eqref{eq:5}) gives \eqref{eq:d1},
which completes the proof.
\end{proof}

\begin{remark} We note that the statement of the lemma above concerns not only extremal but 
all radial solutions of problem \eqref{eq:1}. That is, any radial solution of \eqref{eq:1} with $\lambda$ 
comparable with $\lambda^*$ obeys \eqref{eq:d1}. This fact, in particular, implies that any
radial solution of \eqref{eq:1} with $\lambda\asymp\lambda^*$ tends to zero outside of the
origin as $\alpha\to\infty$. 
\end{remark}

\begin{lemma} \label{l:10a}
The extremal solution $u_{\alpha}^*$ of \eqref{eq:1} satisfies
\begin{eqnarray} 
u_{\alpha}^* (0) \to \infty \quad \mbox{as} \quad \alpha\to \infty.
\end{eqnarray}
\end{lemma}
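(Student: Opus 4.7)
The plan is to argue by contradiction, combining the integral identity extracted inside the proof of Lemma \ref{l:3} with the sharp asymptotic of Theorem \ref{t:1}. Suppose for contradiction that there were a constant $C > 0$ and a sequence $\alpha_n \to \infty$ along which $u^*_{\alpha_n}(0) \le C$. Since minimal (hence extremal) solutions are radial and decreasing by Proposition \ref{p:1}(iii), the rescaled profile $v_n(y) := u^*_{\alpha_n}(y/\sqrt{\alpha_n})$ from \eqref{eq:5} satisfies $v_n \le C$ on $[0, \sqrt{\alpha_n}]$.

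Next I would pass to the $G$-formulation \eqref{eq:12} with $\lambda = \lambda^*(\alpha_n)$. Because $G : [0,\infty) \to (0,K]$ defined in \eqref{eq:8} is a strictly decreasing bijection, the bound $v_n \le C$ translates into $G_n(0) \ge \eps_C$, where $\eps_C := \int_C^{\infty} ds/f(s) > 0$, while trivially $G_n(y) \le K$ for every $y$. I would then apply the chain \eqref{eq:a3}--\eqref{eq:a8} from the proof of Lemma \ref{l:3} verbatim to the extremal solution: those estimates use only equation \eqref{eq:12} together with the monotonicity of $\varphi$ and $\psi$, and therefore work for any positive radial solution. Integrating the resulting lower bound \eqref{eq:a7} for $G_n'$ from $0$ to $h\sqrt{\alpha_n}$ gives
\begin{equation*}
K - \eps_C \;\ge\; G_n(h\sqrt{\alpha_n}) - G_n(0) \;\ge\; \frac{\lambda^*(\alpha_n)}{\alpha_n}\!\left(\frac{\log \alpha_n}{2} - c\right).
\end{equation*}
By Theorem \ref{t:1}, $\lambda^*(\alpha_n)\log \alpha_n / \alpha_n \to 2K$, so the right-hand side converges to $K$, producing the contradiction $\eps_C \le 0$.

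I do not anticipate any genuine obstacle: the only point to double-check is that the estimates \eqref{eq:a3}--\eqref{eq:a8} are genuinely solution-independent, which is immediate from their derivation. The argument is quantitative in flavor — one can in fact read off $u^*_\alpha(0) \ge G^{-1}\!\bigl(K + o(1) - \tfrac{\lambda^*(\alpha)}{\alpha}\cdot\tfrac{\log\alpha}{2}\bigr)$ as $\alpha \to \infty$ — but only the qualitative statement $u^*_\alpha(0) \to \infty$ is needed here.
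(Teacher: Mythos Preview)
Your proposal is correct and follows essentially the same route as the paper: both arguments feed the solution-independent lower bound on $G'$ from the proof of Lemma~\ref{l:3} (integrated over $[0,h\sqrt{\alpha}]$, respectively $[\tilde c,h\sqrt{\alpha}]$) into Theorem~\ref{t:1} to force $G^*_\alpha(0)\to 0$. The only cosmetic difference is that the paper argues directly---showing $G^*_\alpha(\tilde c)\le c\bigl(1/\log\alpha+\sigma(\alpha)\bigr)$ for any fixed $\tilde c$ and then invoking monotonicity---whereas you phrase the same computation as a contradiction.
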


\begin{proof}  First we observe that due to the monotonicity of $G^{\prime},$ inequality \eqref{eq:a7}, and arguments identical to these given in Lemma \ref{l:3},
we have
\begin{eqnarray}\label{eq:pp1}
G(\sqrt{\alpha})-G(\tilde c)=\int_{\tilde c}^{\sqrt{\alpha}} G^{\prime}(y)dy\ge \int_{\tilde c}^{h\sqrt{\alpha}} G^{\prime}(y)dy
\ge \frac{\lambda}{\alpha}\frac{\log \alpha}{2}\left(1-\frac{c}{\log \alpha}\right),
\end{eqnarray}
where $\tilde c\ge 0$ is arbitrary constant independent of $\alpha.$
Also we observe that Theorem \ref{t:1} implies
\begin{eqnarray}\label{eq:pp2}
\lambda^*(\alpha)=\frac{2K\alpha}{\log \alpha}\left(1-\sigma(\alpha)\right),
\end{eqnarray}
with
\begin{eqnarray}
\sigma(\alpha)\to 0, \quad \mbox{as} \quad \alpha \to \infty.
\end{eqnarray}

Since $G(\sqrt{\alpha})=K$ \eqref{eq:pp1} and \eqref{eq:pp2} give that for $\lambda=\lambda^*$
the following estimate holds:
\begin{eqnarray}\label{eq2:28}
G_{\alpha}^*(\tilde c)\le K-\frac{\lambda^*(\alpha) \log \alpha}{2\alpha}\left(1-\frac{c}{\log \alpha}\right)\le c\left(\frac{1}{\log \alpha }+\sigma(\alpha)\right),
\end{eqnarray}
where $G_{\alpha}^*(y)=G(v_{\alpha}^*(y))$.

Thus, $G_{\alpha}^*(\tilde c)\to 0$ as $\alpha\to\infty$.
Therefore,
\begin{eqnarray}
\int_{v_{\alpha}^*(\tilde c)}^{\infty} \frac{ds}{f(s)}\to 0.
\end{eqnarray}
Hence $v_{\alpha}^*(\tilde c)\to \infty$  as $\alpha\to\infty$ and,
consequently,  $v_{\alpha}^*(0)\to \infty$  as $\alpha\to\infty$.

Since $u_{\alpha}^*(0)=v^*_{\alpha}(0)$
we conclude that $u_{\alpha}^*(0)\to \infty$ as $\alpha\to\infty$.
\end{proof}

We now can give a proof of the first part of Theorem \ref{t:2}

\begin{proof}[Proof of Theorem \ref{t:2} part 1]
By Lemma \ref{l:10} we have that
\begin{eqnarray} \label{eq:pp5}
u^*_{\alpha}(x)\le c \frac{\log(\log \alpha)}{\log\alpha}, \qquad |x|\ge \frac{1}{\log{\alpha}}.
\end{eqnarray}
Taking a limit as $\alpha\to \infty$  in \eqref{eq:pp5}, we obtain that 
$u^*_{\alpha} (x)\to 0$ for $x\ne 0$ as $\alpha\to \infty$.
The fact that  $u_{\alpha}^*(0)\to\infty$ as   $\alpha\to \infty$ follows directly from Lemma \ref{l:10a}.
\end{proof}

We now proceed to the proof of the second part of Theorem \ref{t:2}.

\subsection{Integral properties of the extremal solution}

In this section we complete the proof of Theorem \ref{t:2},  which follows from the following two lemmas.

\begin{lemma}\label{l:theta}
Let $\theta^*(\alpha)$ be the largest solution of the equation
\begin{eqnarray}\label{eq:th1}
\frac{f(\theta)}{\theta}=c \log \alpha,
\end{eqnarray}
where $c>0$ is a fixed constant.

Then, for arbitrarily  small $\gamma>0,$ we have
\begin{eqnarray}\label{eq:th2}
\theta^*(\alpha) \le \alpha^{\gamma},
\end{eqnarray}
provided $\alpha$ is sufficiently large.
\end{lemma}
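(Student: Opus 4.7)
The plan is to reduce the bound \eqref{eq:th2} to showing that $f(\theta)/(\theta\log\theta)\to\infty$ as $\theta\to\infty$, and to derive the latter from the integrability hypothesis \eqref{eq:2} combined with convexity of $f$. Introduce $g(\theta):=(f(\theta)-f(0))/\theta$; convexity of $f$ makes $g$ non-decreasing on $(0,\infty)$, and trivially $f(\theta)/\theta\ge g(\theta)$. If one can show $g(\theta)/\log\theta\to\infty$, then for any fixed $\gamma>0$ and $\alpha$ sufficiently large one has $g(\alpha^\gamma)>c\log\alpha$; monotonicity of $g$ then yields $f(\theta)/\theta\ge g(\theta)\ge g(\alpha^\gamma)>c\log\alpha$ for every $\theta\ge\alpha^\gamma$. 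Since $\theta^*(\alpha)$ is by definition the \emph{largest} solution of $f(\theta)/\theta=c\log\alpha$, no solution can live in $[\alpha^\gamma,\infty)$, so $\theta^*(\alpha)<\alpha^\gamma$, which is exactly \eqref{eq:th2}.

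The remaining task is the growth estimate $g(\theta)/\log\theta\to\infty$, which I would prove by a dyadic/Cauchy-condensation argument. Set $a_n:=2^n/f(2^n)$. Since $f$ is non-decreasing,
\begin{equation*}
\int_{2^n}^{2^{n+1}}\frac{ds}{f(s)}\ge\frac{2^n}{f(2^{n+1})}=\tfrac12\, a_{n+1},
\end{equation*}
so hypothesis \eqref{eq:2} forces $\sum_n a_n<\infty$. Because $f(2^n)=f(0)+2^n g(2^n)$, for $n$ large enough that $2^n g(2^n)\ge f(0)$ we have $1/g(2^n)\le 2 a_n$, hence $\sum_n 1/g(2^n)<\infty$. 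The sequence $b_n:=1/g(2^n)$ is non-increasing (as $g$ is non-decreasing), and it is a standard elementary fact that any non-negative, non-increasing, summable sequence $b_n$ satisfies $n b_n\to 0$. Applied to our $b_n$ this gives $g(2^n)/n\to\infty$, equivalently $g(2^n)/\log(2^n)\to\infty$. Monotonicity of $g$ extends the conclusion to the continuum: for $\theta\in[2^n,2^{n+1}]$ with $n\ge 1$, $g(\theta)/\log\theta\ge g(2^n)/(2\log 2^n)\to\infty$.

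The principal point of the argument is the passage from the bare convergence of $\int_0^\infty ds/f(s)$ to the concrete logarithmic lower bound $f(\theta)\gg\theta\log\theta$; the dyadic decomposition together with the ``$n b_n\to 0$'' observation seems to be the cleanest bridge, and it uses only the two standing assumptions \eqref{eq:2} and convexity of $f$. Everything else reduces to monotonicity manipulations and is routine.
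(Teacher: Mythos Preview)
Your proof is correct and reaches the same intermediate goal as the paper---namely that $f(\theta)/(\theta\log\theta)\to\infty$---but by a genuinely different mechanism. The paper works on the interval $[\sqrt\theta,\theta]$: it bounds $f$ from above by the secant line there (convexity), integrates the reciprocal of that linear function explicitly, and obtains
\[
\rho(\theta):=\int_{\sqrt\theta}^{\theta}\frac{ds}{f(s)}\;\ge\; c\,\frac{\theta\log\theta}{f(\theta)},
\]
so that $\rho(\theta)\to 0$ forces the desired growth. Your route is discrete: Cauchy condensation turns \eqref{eq:2} into $\sum 2^n/f(2^n)<\infty$, monotonicity of the secant slope $g(\theta)=(f(\theta)-f(0))/\theta$ makes $b_n=1/g(2^n)$ non-increasing, and the elementary ``$nb_n\to 0$'' fact finishes. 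The paper's argument is slightly more quantitative, since it ties $f(\theta)/(\theta\log\theta)$ directly to the tail integral $\rho(\theta)$, which could in principle yield explicit rates. Your argument, on the other hand, makes the final deduction of $\theta^*(\alpha)<\alpha^\gamma$ cleaner: once $g(\alpha^\gamma)>c\log\alpha$, monotonicity of $g$ immediately excludes solutions in $[\alpha^\gamma,\infty)$, with no need to track $\theta^*(\alpha)\to\infty$ separately. Both proofs use only convexity, monotonicity of $f$, and \eqref{eq:2}.
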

\begin{proof}
Let $\theta>1$. Then, by the convexity of $f,$ we have
\begin{eqnarray}\label{eq:th3}
f(s)\le g(s)=f(\sqrt{\theta})+\frac{f(\theta)-f(\sqrt{\theta})}{\theta-\sqrt{\theta}}(s-\sqrt{\theta}), \quad s\in[\sqrt{\theta},\theta],
\end{eqnarray}
and
\begin{eqnarray}\label{eq:th4}
f(t\theta)\le tf({\theta})+(1-t)f(0), \quad t\in[0,1].
\end{eqnarray}
In particular, setting $t=1/\sqrt{\theta},$ the latter inequality gives
\begin{eqnarray}\label{eq:th5}
f(\sqrt{\theta})\le \frac{f(\theta)}{\sqrt{\theta}}+c.
\end{eqnarray}
Next let
\begin{eqnarray}\label{eq:th6}
\rho(\theta)=\int_{\sqrt{\theta}}^{\theta} \frac{ds}{f(s)}.
\end{eqnarray}
By \eqref{eq:th3} we have
\begin{eqnarray}\label{eq:th7}
\rho(\theta)\ge \int_{\sqrt{\theta}}^{\theta} \frac{ds}{g(s)}
=\frac{\theta-\sqrt{\theta}}{f(\theta)-f(\sqrt{\theta})}\log\left( \frac{f(\theta)}{f(\sqrt{\theta})}\right).
\end{eqnarray}
This observation together with \eqref{eq:th5} implies that for $\theta$ sufficiently large
we have
\begin{eqnarray}
\rho(\theta)> c \frac{\theta \log(\theta)}{f(\theta)}.
\end{eqnarray}
By \eqref{eq:2}, $\rho(\theta)\to 0$ as $\theta\to \infty$ and therefore,
\begin{eqnarray}
\frac{f(\theta)}{\theta} > \log(\theta) \chi(\theta),
\end{eqnarray}
with some function $\chi(\theta)$ having the property that $\chi(\theta)\to \infty$ as $\theta\to \infty$.
In view of this observation we have
\begin{eqnarray}
\log(\theta^*) \chi(\theta^*) \le c \log \alpha.
\end{eqnarray}
The statement of the lemma then follows immediately.
\end{proof}

\begin{lemma}\label{l:x} 
Let $\delta,\gamma>0$ be arbitrary fixed small numbers such that $\gamma+4\delta<1$.	
If $\alpha$ is large enough, then 
there exists a point $a<\alpha^{2\delta}$  such that
\begin{eqnarray}\label{eq:x6}
\int_0^a v_{\alpha}^*(y)ydy \le c \alpha^{\gamma+4\delta}, \qquad \int_0^a \psi_{\alpha}(y)f(v_{\alpha}^*(y))ydy \le c \alpha^{\gamma+4\delta}\log\alpha,
\end{eqnarray}
and 
\begin{eqnarray}\label{eq:x7}
v_{\alpha}^*(y)\le c \alpha^{\gamma} , \quad f(v_{\alpha}^*(y)) \le c\alpha^{\gamma}\log \alpha
\quad \mbox{when} \quad y\ge a,
\end{eqnarray}
where $c=c(\delta,\gamma)>0$ is a constant independent of $\alpha$.
\end{lemma}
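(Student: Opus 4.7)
My plan is to pick $a$ as a point where $v_\alpha^*$ has dropped to a suitable $\alpha$-dependent threshold, and to derive the integral bounds from the divergence form of \eqref{eq:6} via integration by parts. Fix a large constant $c_0>0$ (to be pinned down by the absorption step at the end) and let $\theta^\star=\theta^\star(\alpha)$ be the largest root of $f(\theta)/\theta=c_0\log\alpha$; Lemma \ref{l:theta} then gives $\theta^\star\le\alpha^\gamma$ for large $\alpha$. Since $\int_0^\infty ds/f(s)<\infty$ and $f$ is convex with $f(0)>0$, the ratio $f(v)/v$ has a unique minimum and is strictly increasing above it, so for $v\ge\theta^\star$ one has $f(v)\ge c_0 v\log\alpha$. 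Lemma \ref{l:10} applied to $v_\alpha^*$ (with $\vartheta<1$ close to $1$, and separately with $\delta$ replaced by $\delta/2$) gives $v_\alpha^*(\alpha^{2\delta})$ and $v_\alpha^*(\alpha^\delta)$ both bounded by a constant, hence strictly below $\theta^\star\to\infty$ once $\alpha$ is large.

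If $v_\alpha^*(0)\le\theta^\star$ I take $a=0$ and the lemma is immediate; otherwise by the intermediate value theorem there is a unique $a_0\in(0,\alpha^{2\delta})$ with $v_\alpha^*(a_0)=\theta^\star$. Pigeon-holing $-(v_\alpha^*)'$ over $[\max(\alpha^\delta,a_0),\alpha^{2\delta}]$, whose total variation is at most $\max(c,\theta^\star)\le c\alpha^\gamma$, produces a point $a$ in that interval with $-a(v_\alpha^*)'(a)\le c\alpha^{\gamma+\delta}$. Monotonicity of $v_\alpha^*$ and $v_\alpha^*(a)\le\theta^\star$ then give the pointwise conclusions $v_\alpha^*(y)\le\alpha^\gamma$ and $f(v_\alpha^*(y))\le f(\theta^\star)\le c_0\alpha^\gamma\log\alpha$ on $[a,\sqrt{\alpha}]$.

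For the integral bounds I integrate the divergence form \eqref{eq:d2} over $[0,a]$ and integrate by parts the resulting drift term to obtain
\[
\int_0^a y\psi_\alpha f(v_\alpha^*)\,dy
=\tfrac{\alpha}{\lambda^*}\bigl(-a(v_\alpha^*)'(a)\bigr)
+\tfrac{\alpha}{\lambda^*}\int_0^a y^2\varphi_\alpha\bigl(-(v_\alpha^*)'\bigr)\,dy.
\]
A second integration by parts shows that the last term is bounded by $c\int_0^a yv_\alpha^*\,dy$ (the boundary term at $a$ has the right sign, and $y^2|\varphi_\alpha'|\le cy$ on $[0,\alpha^{2\delta}]$ because $\delta<1/4$). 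Using Theorem \ref{t:1} to write $\alpha/\lambda^*=O(\log\alpha)$ and splitting $\int_0^a yv_\alpha^*=\int_0^{a_0}+\int_{a_0}^a$, on the first piece one exploits $f(v_\alpha^*)\ge c_0 v_\alpha^*\log\alpha$ together with $\psi_\alpha\ge 1/2$ from \eqref{eq:mover} to bound $\log\alpha\int_0^{a_0}yv_\alpha^*\,dy\le (c/c_0)\int_0^{a_0}y\psi_\alpha f(v_\alpha^*)\,dy$, while on the second piece the pointwise bound $v_\alpha^*\le\alpha^\gamma$ and $a^2\le\alpha^{4\delta}$ yield $\log\alpha\int_{a_0}^a yv_\alpha^*\,dy\le c\alpha^{\gamma+4\delta}\log\alpha$. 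This leads to the self-referential estimate
\[
\int_0^a y\psi_\alpha f(v_\alpha^*)\,dy
\le c\alpha^{\gamma+4\delta}\log\alpha
+\tfrac{c}{c_0}\int_0^a y\psi_\alpha f(v_\alpha^*)\,dy,
\]
and choosing $c_0$ large enough absorbs the last term to give the second integral bound in \eqref{eq:x6}; the first integral bound then follows from revisiting the split of $\int_0^a yv_\alpha^*$ with the just-obtained bound on $\int_0^a y\psi_\alpha f(v_\alpha^*)$.

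The main obstacle is the calibration of $c_0$ so that the absorption step actually closes, together with the borderline case of the pigeon-hole in which $a_0$ is very close to $\alpha^{2\delta}$: there one either shrinks the pigeon-hole window or takes $a=a_0$ and uses \eqref{eq:d3} to bound $-a_0(v_\alpha^*)'(a_0)$ in terms of the integral we are trying to estimate, again leading to a self-referential inequality that closes for $c_0$ large.
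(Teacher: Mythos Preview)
Your argument is correct but follows a genuinely different route from the paper's. The paper introduces the auxiliary function $\Gamma(y)=y(v_\alpha^*)'(y)+y^2\varphi_\alpha(y)v_\alpha^*(y)$, shows that it changes sign on $(0,\alpha^{2\delta})$, takes $a$ to be a zero of $\Gamma$, and at the interior minimum $a_0$ uses $\Gamma'(a_0)=0$ together with Lemma~\ref{l:theta} to obtain the pointwise bound; the integral bounds then come in one stroke from the identity $\int_0^a\Gamma'\,dy=\Gamma(a)-\Gamma(0)=0$ combined with Jensen's inequality for the convex $f$, which gives $\langle v\rangle\ge c(\lambda^*/\alpha)f(\langle v\rangle)$ and hence $\langle v\rangle\le c\alpha^\gamma$. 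You instead fix the threshold $\theta^\star$ first, locate $a_0$ by $v_\alpha^*(a_0)=\theta^\star$, pigeon-hole to pick $a$ with small $-a(v_\alpha^*)'(a)$, and close via a self-referential absorption after one integration by parts on the drift term---this avoids both the device $\Gamma$ and Jensen, at the price of having to calibrate $c_0$ so the absorption closes. One simplification you can make: your own observation $v_\alpha^*(\alpha^\delta)\le c$ already forces $a_0<\alpha^\delta$ for large $\alpha$, so the pigeon-hole window is always the full interval $[\alpha^\delta,\alpha^{2\delta}]$ with total variation $O(1)$; the ``borderline case'' you flag in the last paragraph therefore never arises, and $-a(v_\alpha^*)'(a)$ can in fact be taken $O(1)$ rather than $O(\alpha^{\gamma+\delta})$.
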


\begin{proof}
First we claim that 
\begin{eqnarray}
v_{\alpha}^*(\alpha^{\delta})>c.
\end{eqnarray}
Indeed, arguing as in Lemma \ref{l:10} (see Eqs. \eqref{eq:d2}, \eqref{eq:d3}, \eqref{eq:d4}),
we have that
\begin{eqnarray}
-v^{\prime}(y)=\frac{\lambda}{\alpha}\frac{\exp(-F(y))}{y}\int_0^y\frac{\psi_{\alpha}(z)}{\varphi_{\alpha}(z)}
z\varphi_{\alpha}(z) f(v(z)) \exp(F(z)) dz,
\end{eqnarray}
where $F$ is defined by \eqref{eq:a18}.

Next, using \eqref{eq:mover},  \eqref{eq:a20},
and the fact that $f(v(y))\ge f(0)>0,$ we have that for $\alpha^{2\delta}\le y\le q\sqrt{\alpha}$
\begin{eqnarray}
&&-v^{\prime}(y)\ge c \frac{\lambda}{\alpha} \frac{\exp( -F(y))}{y}\int_0^y z\varphi_{\alpha}(z)\exp(F(z))dz=\nonumber \\
&&c\frac{\lambda}{\alpha} \frac{1}{y}(1-\exp(-F(y))\ge c\frac{\lambda}{\alpha} \frac{1}{y}(1-\exp(-c\alpha^{4\delta})\ge  c\frac{\lambda}{\alpha} \frac{1}{y}.
\end{eqnarray}
 Integrating this expression from  $\alpha^{2\delta}$ to $q\sqrt{\alpha}$ we get
 \begin{eqnarray}
v(\alpha^{2\delta})\ge v(q\sqrt{\alpha})+c\frac{\lambda}{\alpha}(\frac12-2\delta)(\log \alpha-c).
\end{eqnarray}
 For $\delta<\frac14,$ the latter inequality implies  that for sufficiently large $\alpha$
\begin{eqnarray}
v(\alpha^{2\delta})\ge c\frac{\lambda}{\alpha}\log \alpha.
\end{eqnarray}
In particular, we have
\begin{eqnarray}\label{eq:x1}
v_{\alpha}^*(\alpha^{2\delta})\ge c \frac{\lambda^* \log \alpha}{\alpha}\ge c,
\end{eqnarray}
which proves our claim.

Next let
\begin{eqnarray}
\Gamma(y)=y [v_{\alpha}^*(y)]^{\prime}+y^2 \varphi_{\alpha}(y) v_{\alpha}^*(y).
\end{eqnarray}
We note that since $\varphi$ is Lipshitz continuous $\varphi^{\prime}$ is defined almost everywhere
and $|\varphi^{\prime}(y)|<c.$
Direct computations give
\begin{eqnarray}\label{eq:x2}
\Gamma^{\prime}(y)=\left[ \left( 2\varphi_{\alpha}(y)+\frac{\varphi_{}^{\prime}(y)}{\sqrt{\alpha}}y\right) v_{\alpha}^*(y)-
\frac{\lambda^*}{\alpha} \psi_{\alpha}(y) f(v_{\alpha}^*(y))\right] y
\end{eqnarray}
for almost every $y.$

Clearly, we have $\Gamma(0)=\Gamma^{\prime}(0)=0$. Assume first that $v_{\alpha}^*(0)\ge v_0$
where $v_0$ is the largest solution of
\begin{eqnarray}
4v_0=\frac{\lambda^*}{\alpha} f(v_0).
\end{eqnarray}
Then $\Gamma(y)$ is negative in some small neighborhood of $y=0.$  
On the other hand $\Gamma(\alpha^{2\delta})>0$ as follows from \eqref{eq:d21} and \eqref{eq:x1}.
Consequently, there exists a point $y=a\in(0,\alpha^{2\delta})$ such that $\Gamma(a)=0$.
This in particular implies that there exists a point $0<a_0<a$ 
where $\Gamma$ attains its minimum. At that point we have
\begin{eqnarray}
\left( 2\varphi_{\alpha}(a_0)+O\left(\frac{a_0}{\sqrt{\alpha}}\right)\right) v_{\alpha}^*(a_0)=
\frac{\lambda^*}{\alpha} \psi_{\alpha}(a_0) f(v_{\alpha}^*(a_0)).
\end{eqnarray}
This implies that
\begin{eqnarray}
c v_{\alpha}^*( a_0)=\frac{\lambda^*}{\alpha} f(v_{\alpha}^*(a_0)).
\end{eqnarray}
Therefore, as follows form Lemma \ref{l:theta} and the monotonicity of $v_{\alpha}^*(y),$ 
for $y\ge a_0$ we have
\begin{eqnarray}
v_{\alpha}^*(y)< c \alpha^{\gamma}, \qquad f(v_{\alpha}^*(y)) \le c \alpha^{\gamma}\log \alpha.
\end{eqnarray}
Taking into account that $a_0<a$ and the monotonicity of $v_{\alpha}^*(y),$ the latter inequalities
give \eqref{eq:x7}.

Next integrating \eqref{eq:x2} we have 
\begin{eqnarray}
\int_0^a \Gamma^{\prime}(y)dy=\int_0^a \left[ \left( 2\varphi_{\alpha}(y)+\frac{\varphi_{}^{\prime}(y)}{\sqrt{\alpha}}y\right) v_{\alpha}^*(y)-
\frac{\lambda}{\alpha} \psi_{\alpha}(y) f(v_{\alpha}^*(y))\right] ydy =0.
\end{eqnarray}
Therefore,
\begin{eqnarray}\label{eq:x3}
\int_0^a \left( 2\varphi_{\alpha}(y)+\frac{\varphi_{}^{\prime}(y)}{\sqrt{\alpha}}y\right) v_{\alpha}^*(y)ydy=
\int_0^a
\frac{\lambda}{\alpha} \psi_{\alpha}(y) f(v_{\alpha}^*(y))ydy. 
\end{eqnarray}
This observation and Jensen inequality imply that
\begin{eqnarray}
\langle v \rangle\ge c \frac{\lambda^*}{\alpha} f(\langle v\rangle),
\end{eqnarray}
where 
\begin{eqnarray}
\langle v \rangle=\frac{2}{a^2} \int_0^a v_{\alpha}^*(y)ydy,
\end{eqnarray}
is an average of $v$ over $[0,a]$.
Consequently,  by Lemma \ref{l:theta} we have
\begin{eqnarray}
\langle v\rangle \le c \alpha^{\gamma},
\end{eqnarray}
and thus
\begin{eqnarray}\label{eq:x4}
\int_0^a v_{\alpha}^*(y)ydy < c \alpha^{\gamma+4\delta}.
\end{eqnarray}
The latter inequality and \eqref{eq:x3} imply that
\begin{eqnarray}\label{eq:x5}
\int_0^a \psi_{\alpha}(y)f(v_{\alpha}^*(y))ydy < c  \alpha^{\gamma+4\delta}\log \alpha.
\end{eqnarray}
This proves \eqref{eq:x6}.

Assume now that  $v_{\alpha}^*(0)\le v_0.$ In this case by Lemma \ref{l:theta}
\begin{eqnarray} 
v^*_{\alpha}(y)\le \alpha^{\gamma}, \quad f(v_{\alpha}^*(y))\le c \alpha^{\gamma}\log \alpha 
\quad \mbox{on} \quad [0,\sqrt{\alpha}].
\end{eqnarray}
which proves \eqref{eq:x7}. Using these inequalities and taking $a=\alpha^{2\delta}$ we also
have \eqref{eq:x6} for this case.
\end{proof}

We now can proceed to the proof of the second part of Theorem \ref{t:2}.

\begin{proof}[Proof of Theorem \ref{t:2} part 2]
We first observe that
\begin{eqnarray}
&& \int_0^{\sqrt{\alpha}} v_{\alpha}^*(y)ydy=\left\{\int_0^a +\int_a^{\alpha^{2\delta}}  
+\int_{\alpha^{2\delta}}^{\frac{\sqrt{\alpha}}{\log \alpha}} +\int_{\frac{\sqrt{\alpha}}{\log \alpha}}^{\sqrt{\alpha}} \right\}v_{\alpha}^*(y)ydy=L_1+L_2+L_3+L_4.
\end{eqnarray}
By \eqref{eq:x6}, \eqref{eq:x7} (see Lemma \ref{l:x}) we have that
\begin{eqnarray}
L_1,L_2 \le c  \alpha^{\gamma+4\delta}.
\end{eqnarray}
Moreover by \eqref{eq:x10} (see Lemma \ref{l:10}) we also have that
\begin{eqnarray}
L_3\le c \frac{\alpha}{(\log \alpha)^2}, \quad L_4\le c\frac{\log(\log \alpha)}{\log \alpha}\alpha.
\end{eqnarray}
Hence,
\begin{eqnarray}
\int_0^{\sqrt{\alpha}} v_{\alpha}^*(y)ydy\le c \frac{\log(\log \alpha)}{\log \alpha}\alpha.
\end{eqnarray}
Observing that 
\begin{eqnarray}\label{eq:xx1}
\int_{B} u_{\alpha}^*(x)dx =\frac{2\pi}{\alpha} \int_0^{\sqrt{\alpha}} v_{\alpha}^*(y)ydy \le c \frac{\log(\log \alpha)}{\log \alpha}.
\end{eqnarray}
Taking a limit  as $\alpha\to \infty$ in the right hand side of \eqref{eq:xx1} and using the positivity of $u_{\alpha}^*,$ we obtain the first part of \eqref{eq:t222}.

Next, we perform a computation similar to those above, 
\begin{eqnarray}
&& \int_0^{\sqrt{\alpha}}\psi_{\alpha}(y) f(v_{\alpha}^*(y))ydy=\left\{\int_0^a +\int_a^{\alpha^{2\delta}}  
+\int_{\alpha^{2\delta}}^{\frac{\sqrt{\alpha}}{\log \alpha}} +\int_{\frac{\sqrt{\alpha}}{\log \alpha}}^{\sqrt{\alpha}} \right\}\psi_{\alpha}(y) f(v_{\alpha}^*(y))ydy=\nonumber \\
&&P_1+P_2+P_3+P_4.
\end{eqnarray}
By \eqref{eq:x6}, \eqref{eq:x7} (see Lemma \ref{l:x}) we have that
\begin{eqnarray}
P_1,P_2 \le c  \alpha^{\gamma+4\delta}\log \alpha.
\end{eqnarray}
By \eqref{eq:d9} we have
\begin{eqnarray}
P_3\le c \frac{\alpha}{(\log \alpha)^2},
\end{eqnarray}
and  by \eqref{eq:x10}
\begin{eqnarray}
P_4\le f\left ( \frac{\log(\log \alpha)}{\log \alpha}\right) \int_{\frac{\sqrt{\alpha}}{\log \alpha}}^{\sqrt{\alpha} }\psi_{\alpha}(y) ydy,
\end{eqnarray}

Arguing as above we then have
\begin{eqnarray}
&&\int_{B} \psi(x) f(u_{\alpha}^*(x))dx=\frac{2\pi}{\alpha} \int_0^{\sqrt{\alpha}}\psi_{\alpha}(y) f(v_{\alpha}^*(y))ydy 
\le \frac{c}{(\log\alpha)^2}+\frac{2\pi}{\alpha}  f\left ( \frac{\log(\log \alpha)}{\log \alpha}\right) \int_{\frac{\sqrt{\alpha}}{\log \alpha}}^{\sqrt{\alpha} }\psi_{\alpha}(y) ydy \nonumber\\
&&=\frac{c}{(\log\alpha )^2}+  f\left ( \frac{\log(\log \alpha)}{\log \alpha}\right) 
\int_{B\setminus B\left(0,\frac{1}{\log(\alpha)}\right)} \psi(x)dx.
\end{eqnarray}
Therefore, 
\begin{eqnarray}
\int_{B} \psi(x) f(u_{\alpha}^*(x))dx\le f(0)\int_{B} \psi(x) dx+\tilde \sigma(\alpha),
\end{eqnarray}
for some $\tilde \sigma(\alpha)$ having the property that $ \tilde \sigma(\alpha) \to 0$ as
$\alpha \to \infty$.  In view of this observation and the fact that
\begin{eqnarray}
\int_{B} \psi(x) f(u_{\alpha}^*(x))dx> f(0)\int_{B} \psi(x) dx,
\end{eqnarray}
which follows from the positivity of  $u_{\alpha}^*,$ we have the second part of \eqref{eq:t222}, which completes the proof.

\end{proof}

We now turn to the proof of Theorem \ref{t:3}.

\section{Proof of Theorem \ref{t:3}}

The proof of Theorem \ref{t:3} requires  the following lemma. 
This lemma is based on a rescaled  version of inequality  \eqref{eq:nedev}
which was first introduced in \cite{CR75}.

\begin{lemma}\label{l:rest}
 Let $v_{\alpha}^*$ be an extremal solution of \eqref{eq:6} and set
\begin{eqnarray}\label{eq1:01}
f^{\sharp}(t)=f(v^*_{\alpha}(1)+t), \quad f_v^{\sharp}(t)=f_v(v^*_{\alpha}(1)+t), \quad
 \tilde f^{\sharp}(t)=f^{\sharp}(t)-f^{\sharp}(0),
\quad g^{\sharp}(t)=\int_0^t (f_v^{\sharp}(s))^2 ds.
\end{eqnarray}

Assume that there exist constants  $0<\tilde c_0<1$ and  $\tilde c_1>1$ such that for sufficiently
large $\alpha$ 
\begin{eqnarray}\label{eq1:02}
f^{\sharp}(t) \ge \tilde c_1 f^{\sharp}(0), \quad t \ge 0,
\end{eqnarray}
implies 
\begin{eqnarray}\label{eq1:03}
(1-\tilde c_0) \tilde f^{\sharp}(t)  f^{\sharp}_v(t) \ge g^{\sharp}(t),
\end{eqnarray}

Then, for sufficiently large $\alpha$,
\begin{eqnarray}\label{eq1:04}
v^*_{\alpha}(0) \le v_{\alpha}^*(1)+c \frac{f(v_{\alpha}^*(1))}{\log \alpha},
\end{eqnarray}
where $c>0$ is some constant independent of $\alpha.$
\end{lemma}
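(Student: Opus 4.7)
The plan is to restrict the analysis to the ball $B(0,1)$ in rescaled $y$-coordinates, where (by the radial decreasing nature of $v^*_\alpha$) one has $v^*_\alpha(y)\ge v^*_\alpha(1)$. Setting $w := v^*_\alpha - v^*_\alpha(1)$, the function $w$ is nonnegative, radial, strictly decreasing, vanishes on $\partial B(0,1)$, and satisfies the shifted equation in divergence form
\begin{equation*}
-\nabla\cdot(\mu_\alpha\nabla w) \;=\; \frac{\lambda^*}{\alpha}\,\mu_\alpha \psi_\alpha\, f^{\sharp}(w)\quad \text{in } B(0,1).
\end{equation*}
The semi-stability condition for $v^*_\alpha$ restricts to test functions in $H^1_0(B(0,1))$, which is the key input that will be exploited.

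\textit{Step 1 (Nedev's identity on $B(0,1)$).} Substitute $\eta=\tilde f^{\sharp}(w)\in H^1_0(B(0,1))$ into the semi-stability inequality \eqref{eq:semistab}, and separately multiply the shifted equation by $g^{\sharp}(w)$ and integrate by parts. Combining these two identities (as in Lemma \ref{l:int5}) yields
\begin{equation*}
\int_{B(0,1)} \mu_\alpha \psi_\alpha \, f^{\sharp\prime}(w)\,(\tilde f^{\sharp}(w))^2 \,dy \;\le\; \int_{B(0,1)} \mu_\alpha \psi_\alpha \, f^{\sharp}(w)\, g^{\sharp}(w) \,dy.
\end{equation*}

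\textit{Step 2 (Using the hypothesis).} Partition $B(0,1)=A_1\cup A_2$ with $A_1=\{y:f^{\sharp}(w(y))\ge \tilde c_1 f^{\sharp}(0)\}$. On $A_1$, hypothesis \eqref{eq1:03} gives $g^{\sharp}(w)\le (1-\tilde c_0)\tilde f^{\sharp}(w)f^{\sharp\prime}(w)$; on $A_2$, bound $f^{\sharp}(w)g^{\sharp}(w)$ trivially by a constant depending only on $f^{\sharp}(0)$. Writing $f^{\sharp}=\tilde f^{\sharp}+f^{\sharp}(0)$, absorbing the $(\tilde f^{\sharp})^2 f^{\sharp\prime}$ contribution back into the left (the factor $\tilde c_0$ makes this possible), and estimating the resulting cross term by Cauchy--Schwarz, one obtains
\begin{equation*}
\int_{A_1}\mu_\alpha\psi_\alpha f^{\sharp\prime}(w)(\tilde f^{\sharp}(w))^2 \;\le\; C\,(f^{\sharp}(0))^2 \int_{A_1}\mu_\alpha\psi_\alpha f^{\sharp\prime}(w) + C_1,
\end{equation*}
with constants depending only on $\tilde c_0,\tilde c_1$.

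\textit{Step 3 (Cutoff stability).} Insert a standard cutoff $\eta\in H^1_0(B(0,R))$ with $\eta\equiv 1$ on $B(0,1)$ (for a fixed $R>1$) into \eqref{eq:semistab}. Since $\mu_\alpha,\psi_\alpha\asymp 1$ on $B(0,R)$ and $\alpha/\lambda^*\asymp \log\alpha$ by Theorem \ref{t:1}, this gives
\begin{equation*}
\int_{B(0,1)}\mu_\alpha\psi_\alpha f^{\sharp\prime}(w)\;\le\; C\log\alpha.
\end{equation*}

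\textit{Step 4 (Radial pointwise representation).} Testing the divergence-form equation for $w$ against the truncated logarithmic weight $\min(\log(1/|y|),N)$ and passing to the limit $N\to\infty$ (the boundary term at the origin vanishes thanks to $w'(0)=0$), then simplifying via $\mu_\alpha,\psi_\alpha\asymp 1$ on $B(0,1)$, yields the representation
\begin{equation*}
w(0)\;\le\; \frac{C\lambda^*}{\alpha}\int_0^1 f^{\sharp}(w(y))\log(1/y)\,y\,dy.
\end{equation*}

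\textit{Step 5 (Closing the estimate).} Split the integral over $A_1$ and $A_2$. On $A_2$ one has $f^{\sharp}(w)\le \tilde c_1 f^{\sharp}(0)$, so this contribution is at most $Cf^{\sharp}(0)\int_0^1 y\log(1/y)\,dy\le Cf^{\sharp}(0)$. For $A_1$, note that $f^{\sharp}\asymp \tilde f^{\sharp}$ on $A_1$, so Step 2 upgrades to $\int_{A_1}(f^{\sharp}(w))^2 f^{\sharp\prime}(w)y\,dy\le C(f^{\sharp}(0))^2\log\alpha$, while Step 3 combined with $f^{\sharp\prime}(w)\ge f^{\sharp\prime}(w_1)$ (for $w_1$ defined by $f^{\sharp}(w_1)=\tilde c_1 f^{\sharp}(0)$) yields the measure bound $|A_1|\le C\log\alpha/f^{\sharp\prime}(w_1)$. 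A Cauchy--Schwarz pairing of the form
\begin{equation*}
\int_{A_1} f^{\sharp}(w)\log(1/y) y\,dy \;\le\; \Bigl(\int_{A_1}(f^{\sharp}(w))^2 f^{\sharp\prime}(w) y\,dy\Bigr)^{1/2}\Bigl(\int_{A_1}\frac{\log^2(1/y)}{f^{\sharp\prime}(w)}\,y\,dy\Bigr)^{1/2}
\end{equation*}
together with the consequence of \eqref{eq1:03} at $t=w_1$ that $w_1 f^{\sharp\prime}(w_1)\ge c\,f^{\sharp}(0)$ (obtained by combining \eqref{eq1:03} with the Cauchy--Schwarz lower bound $g^{\sharp}(w_1)\ge(\tilde f^{\sharp}(w_1))^2/w_1$), produces the desired bound of order $f^{\sharp}(0)$. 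Feeding this back into Step 4 with $\lambda^*/\alpha\asymp 1/\log\alpha$ yields $w(0)\le C f^{\sharp}(0)/\log\alpha$, which is precisely \eqref{eq1:04}.

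The principal technical obstacle is Step 5, where one must convert the $L^2$-type Nedev bound into a sharp $L^1$-log estimate of order $f^{\sharp}(0)$ without spurious $\log\alpha$ or $\log\log\alpha$ factors. This is made possible by the hypothesis \eqref{eq1:02}--\eqref{eq1:03} because it controls both $f^{\sharp\prime}(w_1)$ from below and the decay of $(f^{\sharp\prime})^2$ relative to $\tilde f^{\sharp}f^{\sharp\prime}$, which together force the super-level set $A_1$ to be small enough and $f^{\sharp\prime}$ on $A_1$ to be large enough for the Cauchy--Schwarz argument to close at the sharp rate.
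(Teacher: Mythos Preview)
Your Steps 1--2 set up the right framework (the Nedev-type inequality on $B(0,1)$ for $w=v^*_\alpha-v^*_\alpha(1)$, followed by a level-set split), and this matches the paper. The divergence is in how you exit Step~2. The paper extracts from the same inequality a \emph{clean, $\alpha$-independent} $L^2$ bound
\[
\int_0^1 \bigl(f^{\sharp}(\phi)\bigr)^2\,d\nu\;\le\;c\,\bigl(f^{\sharp}(0)\bigr)^2,
\]
and then invokes the standard elliptic $L^p\!\to\!L^\infty$ estimate in dimension two to get $\phi(0)\le c(\lambda^*/\alpha)f^{\sharp}(0)$ directly. The trick you are missing is the monotonicity observation $\min_{A_1}f^{\sharp\prime}(w)\ge\max_{A_2}f^{\sharp\prime}(w)$: after bounding $g^{\sharp}\le f^{\sharp\prime}\tilde f^{\sharp}$ on $A_2$, this lets one cancel the factor $f^{\sharp\prime}$ across the two sets and arrive at $\int_{A_1}(\tilde f^{\sharp})^2\,d\nu\le C\int_{A_2}f^{\sharp}\tilde f^{\sharp}\,d\nu\le C(f^{\sharp}(0))^2$, with no derivative left over.

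Because you do not perform this cancellation, your Step~2 bound retains the factor $\int_{A_1}f^{\sharp\prime}(w)$, forcing the auxiliary Steps 3--5. Step~3 (cutoff stability) and Step~4 (the log-kernel representation) are fine, but Step~5 as written does not close at the sharp rate. Tracing your Cauchy--Schwarz pairing with the inputs you list gives
\[
\int_{A_1} f^{\sharp}(w)\log\tfrac1y\,y\,dy\;\lesssim\;\sqrt{(f^{\sharp}(0))^2\log\alpha}\cdot\sqrt{1/f^{\sharp\prime}(w_1)}\;=\;f^{\sharp}(0)\sqrt{\log\alpha/f^{\sharp\prime}(w_1)},
\]
and your lower bound $w_1 f^{\sharp\prime}(w_1)\ge c\,f^{\sharp}(0)$ only converts this into $\sqrt{f^{\sharp}(0)\,w_1\log\alpha}$, which in general (e.g.\ $f(u)=(1+u)^p$, where $w_1\sim v^*_\alpha(1)$) is not $O(f^{\sharp}(0))$. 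Also, the remark in Step~2 that the $A_2$ contribution is ``trivially a constant depending only on $f^{\sharp}(0)$'' is not correct as stated: $g^{\sharp}(w)$ involves $(f^{\sharp\prime})^2$, which is unbounded in $\alpha$; one must again use $g^{\sharp}\le f^{\sharp\prime}\tilde f^{\sharp}$ there, and the resulting term is $\le C(f^{\sharp}(0))^2\int_{A_2}f^{\sharp\prime}$ rather than an absolute constant. In short, the detour through Steps~3--5 can perhaps be salvaged with extra work, but the paper's monotone cancellation of $f^{\sharp\prime}$ renders all of it unnecessary.
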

\begin{proof}
As a first step we establish an inequality similar to \eqref{eq:nedev}.

Let
\begin{eqnarray}\label{eq1:1}
\phi(y):=v_{\alpha}^*(y)-v_{\alpha}^*(1).
\end{eqnarray}
By \eqref{eq:6}, this function verifies
\begin{eqnarray}\label{eq1:2}
\left\{
\begin{array}{ll}
-\left(y\mu_{\alpha}(y)  \phi^{\prime} \right)^{\prime}=
\frac{\lambda^*}{\alpha} y\psi_{\alpha}(y) \mu_{\alpha}(y) f^{\sharp}(\phi),
& 0<y<1,\\
\phi^{\prime}(0)=0, & \phi(1)=0,
\end{array}
\right.
\end{eqnarray}
where
\begin{eqnarray}\label{eq1:3}
\mu_{\alpha}(y)=\exp\left(\int_0^y s\varphi_{\alpha}(s)ds\right).
\end{eqnarray}
Multiplying the first equation in \eqref{eq1:2} by $g^{\sharp}(\phi),$ integrating by 
parts and taking into account the boundary conditions in \eqref{eq1:2}, we get
\begin{eqnarray}\label{eq1:4}
\int_0^1 \left(f^{\sharp}_v(\phi)\ \phi^{\prime} \right)^2 d\tilde \nu=
\frac{\lambda^*}{\alpha} \int_0^1 f^{\sharp}(\phi) g^{\sharp}(\phi)d \nu,
\end{eqnarray}
where
\begin{eqnarray}\label{eq1:5}
d \tilde \nu(y) =y \mu_{\alpha}(y) dy, \qquad d \nu(y)=\psi_{\alpha}(y)d\tilde \nu(y).
\end{eqnarray}
We also note that the semi-stability condition \eqref{eq:semistab} implies that
\begin{eqnarray}\label{eq1:6}
\int_0^{\sqrt{\alpha}} (\eta^{\prime})^2 d\tilde \nu\ge \frac{\lambda^*}{\alpha} \int_0^{\sqrt{\alpha}}
f_v^{\sharp}(\phi) \eta^2 d\nu, \quad \forall \eta\in H_0^1((0,\sqrt{\alpha})).
\end{eqnarray}
Taking (in the spirit of the arguments in \cite{nedev} and \cite[Section 4.3]{stable})
\begin{eqnarray}\label{eq1:7}
\eta(y)=\left\{
\begin{array}{ll}
\tilde f^{\sharp}(\phi(y)) & 0\le y\le 1,\\
0& 1<y\le \sqrt{\alpha},
\end{array}
\right.
\end{eqnarray}
and substituting this test function into \eqref{eq1:6},
we obtain
\begin{eqnarray}\label{eq1:8}
\int_0^1 \left(f^{\sharp}_v(\phi)\phi^{\prime} \right)^2 d\tilde \nu\ge
\frac{\lambda^*}{\alpha} \int_0^1 f_v^{\sharp}(\phi) (\tilde f^{\sharp}(\phi))^2d \nu.
\end{eqnarray}
Combining \eqref{eq1:4} and \eqref{eq1:8},
we obtain
\begin{eqnarray}\label{eq1:9}
 \int_0^1 f_v^{\sharp}(\phi) (\tilde f^{\sharp}(\phi))^2d \nu \le
 \int_0^1 f^{\sharp}(\phi) g^{\sharp}(\phi)d \nu.
\end{eqnarray}

Next let $\tilde c_2>\tilde c_1.$ Then,
\begin{eqnarray}\label{eq1:10}
\int_0^1 f^{\sharp}(\phi) g^{\sharp}(\phi)d\nu=\left\{ \int_{X_1} +\int_{X_2} \right\}f^{\sharp}(\phi) g^{\sharp}(\phi)d\nu=I_1+I_2,
\end{eqnarray}
where
\begin{eqnarray}\label{eq1:11}
X_1=\{ f^{\sharp}(\phi)\le \tilde c_2 f^{\sharp}(0) \}, \quad X_2=\{ f^{\sharp}(\phi)> \tilde c_2 f^{\sharp}(0) \}.
\end{eqnarray}
By the assumption of the lemma, we have
\begin{eqnarray}\label{eq1:12}
I_2\le (1-\tilde c_0) \int_{X_2} f^{\sharp}(\phi) \tilde f^{\sharp}(\phi) f_v^{\sharp}(\phi)d\nu. 
\end{eqnarray}
Moreover, 
\begin{eqnarray}\label{eq1:13}
f^{\sharp}(\phi) \le \frac{\tilde c_2}{\tilde c_2-1} \tilde f^{\sharp}(\phi) \quad \mbox{on} \quad X_2.
\end{eqnarray}
Combining these two observations, we conclude that
\begin{eqnarray}\label{eq1:14}
I_2\le \left (1-\frac{\tilde c_0}{2}\right) \int_{X_2}  f_v^{\sharp}(\phi)\left( \tilde f^{\sharp}(\phi) \right)^2d\nu
\end{eqnarray}
provided $\tilde c_2$ is sufficiently large.

Next we observe that \eqref{eq1:9}, \eqref{eq1:10} and \eqref{eq1:14}
imply that
\begin{eqnarray}\label{eq1:15}
\frac{\tilde c_0}{2} \int_{X_2}  f_v^{\sharp}(\phi)\left( \tilde f^{\sharp}(\phi) \right)^2d\nu \le I_1
\end{eqnarray}
Noting that $f_v^{\sharp}$ is non-decreasing, we have
\begin{eqnarray}\label{eq1:16}
g^{\sharp}(t)=\int_0^t f_v^{\sharp}(s) f_v^{\sharp}(s) ds\le
f_v^{\sharp}(t) \int_0^t f_v^{\sharp}(s)  ds =f_v^{\sharp}(t) \tilde f^{\sharp}(t).  
\end{eqnarray}
Consequently,
\begin{eqnarray}\label{eq1:17}
I_1 \le \int_{X_1} f_v^{\sharp}(\phi) f^{\sharp}(\phi) \tilde f^{\sharp}(\phi)  d\nu
\end{eqnarray}
Hence, from  \eqref{eq1:15} and \eqref{eq1:17} we get
\begin{eqnarray}\label{eq1:18}
\frac{\tilde c_0}{2} \int_{X_2}  f_v^{\sharp}(\phi)\left( \tilde f^{\sharp}(\phi) \right)^2d\nu \le
\int_{X_1} f_v^{\sharp}(\phi) f^{\sharp}(\phi) \tilde f^{\sharp}(\phi)  d\nu.
\end{eqnarray}
Since
\begin{eqnarray}\label{eq1:19}
\min_{X_2} f_v^{\sharp}(\phi)\ge \max_{X_1} f_v^{\sharp}(\phi),
\end{eqnarray}
we have from \eqref{eq1:18} that
\begin{eqnarray}
\frac{\tilde c_0}{2} \int_{X_2}  \left( \tilde f^{\sharp}(\phi) \right)^2d\nu \le
\int_{X_1}  f^{\sharp}(\phi) \tilde f^{\sharp}(\phi)  d\nu,
\end{eqnarray}
and thus
\begin{eqnarray}\label{eq1:19}
\int_{X_2}  \left(  f^{\sharp}(\phi) \right)^2d\nu \le c ( f^{\sharp}(0))^2.
\end{eqnarray}
On the other hand, as follows from the definition of $X_1,$ we have
\begin{eqnarray}\label{eq1:20}
\int_{X_1}  \left(  f^{\sharp}(\phi) \right)^2d\nu \le c (f^{\sharp}(0))^2.
\end{eqnarray}
Consequently,
\begin{eqnarray}\label{eq1:21}
\int_0^1  \left(  f^{\sharp}(\phi) \right)^2d\nu \le c (f^{\sharp}(0))^2.
\end{eqnarray}
This estimate and the standard elliptic $L^p$--estimates \cite[Theorem 8.16]{GT} imply that
\begin{eqnarray}\label{eq1:22}
\phi(0)\le c \frac{\lambda^*}{\alpha} f^{\sharp}(0),
\end{eqnarray}
and therefore
\begin{eqnarray}\label{eq1:23}
v_{\alpha}^*(0)\le v_{\alpha}^*(1)+c\frac{f(v_{\alpha}^*(1))}{\log \alpha}.
\end{eqnarray}
\end{proof}

We now turn to the proof of Theorem \ref{t:3}.

\begin{proof}[Proof of Theorem \ref{t:3}]
First let us show that  the assumptions  of Lemma \ref{l:rest} 
hold under the assumptions  
of Theorem \ref{t:3}. Observe that,
as follows from the proof of Lemma \ref{l:10a},  $v_{\alpha}^*(1)\to \infty$ as $\alpha\to\infty.$
In view of this fact, the assumptions of Lemma \ref{l:rest} can be rewritten in the form:
\begin{eqnarray}\label{eq1:101a}
f(t_2)>\tilde c_1 f(t_1) \quad t_2>t_1>t_0,
\end{eqnarray}
implies
\begin{eqnarray}\label{eq1:23a}
(1-\tilde c_0) f_v(t_2) (f(t_2)-f(t_1))\ge \int_{t_1}^{t_2} \left( f_v(s)\right)^2 ds.
\end{eqnarray}
Assume that the assumptions of Theorem \ref{t:3} hold.  Choose
$t_1<\tilde t<t_2,$ $\tilde c_1>c_1$ such that
\begin{eqnarray}\label{eq1:100}
f(\tilde t)=\frac{1}{c_1} f(t_2).
\end{eqnarray}
Then, by \eqref{eqt:2}
\begin{eqnarray}\label{eq1:101}
(1-c_0) f_v(t_2) \ge f_v(\tilde t).
\end{eqnarray}
Using the monotonicity of $f_v,$  \eqref{eq1:100} and \eqref{eq1:101}, we obtain
\begin{eqnarray}
&& \int_{t_1}^{t_2} \left( f_v(s)\right)^2ds=
\left\{ \int_{t_1}^{\tilde t}+\int_{\tilde t}^{t_2} \right \}\left( f_v(s)\right)^2ds
\le f_v(\tilde t) \left( f(\tilde t) -f(t_1) \right)
+ f_v(t_2) \left( f( t_2) -f(\tilde t) \right)\le \nonumber \\
&&f_v(t_2)(f(t_2)-f(t_1)) \left\{\left(
\frac{f(t_2)-f(\tilde t)}{f(t_2)-f(t_1)}\right)+(1-c_0)\left(\frac{f(\tilde t)-f(t_1)}{f(t_2)-f(t_1)}\right)
\right\} = \\
&& f_v(t_2)(f(t_2)-f(t_1)) \left\{ 1-c_0\left(\frac{f(\tilde t) -f(t_1)}{f(t_2)-f(t_1)}\right)\right\}
\le  f_v(t_2)(f(t_2)-f(t_1)) \left\{ 1-\frac{c_0}{c_1}\left(\frac{f( t_2) -c_1 f(t_1)}{f(t_2)}\right)\right\}.
\nonumber
\end{eqnarray}
Now taking $\tilde c_1> 2 c_1$ in \eqref{eq1:101a} we get
\begin{eqnarray}\label{eq1:101b}
\left(\frac{f( t_2) -c_1 f(t_1)}{f(t_2)}\right)\ge \frac12.
\end{eqnarray}
Hence, as follows from
\begin{eqnarray}
\int_{t_1}^{t_2} \left( f_v(s)\right)^2ds\le\left( 1-\frac{c_0}{2c_1}\right) f_v(t_2)(f(t_2)-f(t_1)), 
\end{eqnarray}
which gives \eqref{eq1:23a} with $\tilde c_0=c_0/2c_1.$

We next turn to proof of \eqref{eqt:3}.  First note that taking an arbitrary  smooth test function $\eta$ with support on $[1/2,1]$
in \eqref{eq1:6} and using the monotonicity  of $f_v$ and $v_{\alpha}^*$ we have that
\begin{eqnarray}\label{eq1:24}
f_{v}(v_{\alpha}^*(1)) \le c \log \alpha.
\end{eqnarray}
Next, by convexity,
\begin{eqnarray}\label{eq1:25}
f_v(t)\ge \frac{f(t)-f(0)}{t}, \quad t>0.
\end{eqnarray}
Hence,
\begin{eqnarray}\label{eq1:26}
v_{\alpha}^*(1) f_v(v_{\alpha}^*(1))+c \ge f(v_{\alpha}^*(1)).
\end{eqnarray}
In view of \eqref{eq1:24} and \eqref{eq1:26}
we have that for sufficiently large $\alpha,$
\begin{eqnarray}\label{eq1:27}
 \frac{f(v_{\alpha}^*(1))}{\log \alpha}\le c v_{\alpha}^*(1).
\end{eqnarray}
Combining this result with \eqref{eq1:04}
we have
\begin{eqnarray}\label{eq1:27}
 v_{\alpha}^*(0)\le c v_{\alpha}^*(1),
\end{eqnarray}
which implies the result.

Finally, let us prove \eqref{eqt:5}.
Observe that \eqref{eq1:24}, \eqref{eq1:25} and Lemma \ref{l:theta} imply that
\begin{eqnarray}\label{eq1:28}
v_{\alpha}^*(0) \le c \alpha^{\gamma},
\end{eqnarray} 
for arbitrarily small $\gamma>0$.
This observation and Lemma \ref{l:10} imply that
\begin{eqnarray}\label{eq1:29}
v_{\alpha}^*(y)<
\left\{
\begin{array}{ll}
c \alpha^{\gamma} & y\in[0,\alpha^{2\delta}],\\
c & y\in[\alpha^{2\delta}, \frac{\sqrt{\alpha}}{\log \alpha}],\\
c \frac{\log (\log \alpha)}{\log{\alpha} }& y \in [ \frac{\sqrt{\alpha}}{\log \alpha},\sqrt{\alpha}],
\end{array}
\right.
\end{eqnarray}
 where $\delta >0$ is arbitrarily small.
 Hence,
 \begin{eqnarray}\label{eq1:30}
 \int_0^{\sqrt{\alpha}}( v_{\alpha}^*(y))^pydy \le c\left\{
 \frac{1}{\alpha^{1-p\gamma-4\delta}}
 +\frac{1}{(\log \alpha)^2}+\left(\frac{\log(\log \alpha)}{\log \alpha}\right)^p \right\} \alpha.
 \end{eqnarray}
 We thus have that for any $1\le p<\infty$
 \begin{eqnarray}\label{eq1:31}
 \int_B (u^*_{\alpha}(x))^pdx=\frac{2\pi}{\alpha} \int_0^{\sqrt{\alpha}}( v_{\alpha}^*(y))^pydy\le
 \left\{
 \frac{1}{\alpha^{1-p\gamma-4\delta}}
 +\frac{1}{(\log \alpha)^2}+\left(\frac{\log(\log \alpha)}{\log \alpha}\right)^p \right\}. 
 \end{eqnarray}
 In view that $\delta$ and $\gamma$ can be chosen arbitrarily small we conclude that
 the expression in the braces in \eqref{eq1:31} goes to zero as $\alpha\to \infty$.
 This observation concludes the proof of the theorem.
 \end{proof}

The following lemma gives an example of rather general nonlinearity that satisfies assumptions
of Theorem \ref{t:3}.

\begin{lemma} \label{l:proizv}
Assume that 
$f\in C^2,$  $\frac{df(s)}{ds}, \frac{d^2 f(s)}{d s^2}>0,$ and
$\frac{d^2  f(s)}{d s^2}$ is strictly increasing on
$(0,\infty).$ Then, $f$ satisfies the assumptions of Theorem \ref{t:3}.
\end{lemma}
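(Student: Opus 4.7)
The plan is to verify the implication \eqref{eqt:1}$\Rightarrow$\eqref{eqt:2} by a contrapositive argument based on a quantitative log-convexity-type estimate that uses the monotonicity of $f''$ in an essential way.

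First I would establish that under the hypotheses of the lemma there exist $\gamma>0$ and $T_0>0$ with
\begin{equation*}
\gamma (f'(t))^2 \le f(t) f''(t) \qquad \text{for all } t \ge T_0.
\end{equation*}
Indeed, fix any $t_0 > 0$. Since $f''$ is non-decreasing, $f''(s) \le f''(t)$ on $[t_0, t]$, so
\begin{equation*}
(f'(t))^2 - (f'(t_0))^2 = 2 \int_{t_0}^t f'(s) f''(s)\, ds \le 2 f''(t) \int_{t_0}^t f'(s)\, ds \le 2 f''(t) f(t).
\end{equation*}
The integrability hypothesis \eqref{eq:2} forces $f$ to grow superlinearly, and combined with convexity this yields $f'(t) \to \infty$ as $t \to \infty$. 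Hence for $t$ sufficiently large the constant $(f'(t_0))^2$ can be absorbed into $\tfrac{1}{2}(f'(t))^2$, giving $(f'(t))^2 \le 4 f(t) f''(t)$, i.e. $\gamma = 1/4$ works.

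Second, given an arbitrary $c_0 \in (0,1)$, set $M := 1/(1-c_0)$ and suppose $t_2 > t_1 > T_0$ satisfy $f'(t_2) < M f'(t_1)$. From $f'(t_2) - f'(t_1) = \int_{t_1}^{t_2} f''(s)\, ds \ge f''(t_1)(t_2 - t_1)$ one obtains $t_2 - t_1 < (M-1) f'(t_1)/f''(t_1)$. Convexity and the bound $(\star)$ from the first step then give
\begin{equation*}
f(t_2) - f(t_1) \le f'(t_2)(t_2 - t_1) < M(M-1) \frac{(f'(t_1))^2}{f''(t_1)} \le 4 M(M-1) f(t_1),
\end{equation*}
so $f(t_2) < (1 + 4M(M-1)) f(t_1)$. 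Setting $c_1 := 2 + 4M(M-1)$ and $t_0 := T_0$, the contrapositive of the implication just proved is exactly \eqref{eqt:1}$\Rightarrow$\eqref{eqt:2} required by Theorem \ref{t:3}.

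The main obstacle is the first step. For a generic convex $f$ with $f'' > 0$ the bound $(f')^2 \lesssim f f''$ need not hold: a strongly concentrated $f''$ near $t$ would let $f'$ and $f$ sample much smaller values of $f''$ over the bulk of $[0,t]$, inflating the left side while keeping the right side modest. The monotonicity assumption on $f''$ is precisely what makes the pointwise comparison $f''(s) \le f''(t)$ available throughout $[t_0,t]$, and this is what drives the Cauchy-type argument through.
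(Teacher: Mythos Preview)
Your argument is correct and is genuinely different from the paper's proof. You extract a single key pointwise inequality, $(f'(t))^2 \le 4 f(t) f''(t)$ for large $t$, from the monotonicity of $f''$, and then run a clean contrapositive: a bound $f'(t_2) < M f'(t_1)$ forces $t_2 - t_1$ to be small via $f''(t_1)(t_2-t_1) \le f'(t_2)-f'(t_1)$, whence $f(t_2)-f(t_1) \le f'(t_2)(t_2-t_1)$ is controlled by $(f'(t_1))^2/f''(t_1) \le 4 f(t_1)$. The paper instead works directly with fixed constants $c_1=4$, $c_0=1/3$ and splits into the cases $t_2 \le 2t_1$ and $t_2 \ge 2t_1$: in the first case it argues by contradiction, reflecting the interval $[t_1,t_2]$ to $[2t_1-t_2,t_1]$ and using monotonicity of $f''$ to force $f$ to become nonpositive; in the second case it uses $\int_0^{2t_1} f'' \ge 2\int_0^{t_1} f''$ directly, never invoking the ratio hypothesis $f(t_2)\ge c_1 f(t_1)$ at all.

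What each approach buys: your route is more conceptual and shows that \emph{any} $c_0 \in (0,1)$ works with a suitable $c_1$, which is slightly stronger than what the statement requires. The paper's route produces small explicit constants and avoids the preliminary step of showing $f'(t)\to\infty$, though that fact is straightforward under the standing assumption \eqref{eq:2}. Both proofs use the monotonicity of $f''$ in the same essential way---to compare values of $f''$ on adjacent intervals---but package it differently.
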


\begin{proof}
Let us show that under the assumption of this lemma
the assumptions of Theorem \ref{t:3} are satisfied with $c_1=4$ and $c_0=\frac13.$ 

Take $t_2>t_1$ such that
\begin{eqnarray}\label{eq5:1}
f(t_2)\ge 4 f(t_1), 
\end{eqnarray}
and consider two cases: case I in which $t_2\le 2 t_1,$ and case II in which $t_2\ge 2 t_1.$

We start with case I. Assume that
\begin{eqnarray}\label{eq5:2}
f_v(t_1)\ge \frac23 f_v(t_2).
\end{eqnarray} 
Then, assumption \eqref{eq5:2} implies
\begin{eqnarray}\label{eq5:3}
\int_{t_1}^{t_2}  \frac{d^2}{ds^2} f(s)=f_v(t_2)-f_v(t_1)\le \frac13 f_v(t_2).
\end{eqnarray}
Let $t_3=2t_1-t_2$ and note that under the assumption of case I we have that $0\le t_3\le t_1.$
Since $t_1-t_3=t_2-t_1$ and $\frac{d^2 f(s)}{ds^2}$ is strictly increasing,
we have from \eqref{eq5:3} that
 \begin{eqnarray}\label{eq5:4}
\int_{t_3}^{t_1}  \frac{d^2}{ds^2} f(s)=f_v(t_1)-f_v(t_3)\le \frac13 f_v(t_2).
\end{eqnarray}
Therefore,
\begin{eqnarray}\label{eq5:5}
f_v(t_3)\ge f_v(t_1) -\frac13 f_v(t_2),
\end{eqnarray}
and hence, by \eqref{eq5:2},
\begin{eqnarray}\label{eq5:6}
f_v(t_3)\ge \frac13 f_v(t_2).
\end{eqnarray}
Since $f_v$ is an increasing function we also
have
\begin{eqnarray}\label{eq5:7}
f_v(s)\ge \frac13 f_v(t_2), \quad s\in[t_3,t_1].
\end{eqnarray}
Integrating \eqref{eq5:7} from $t_3$ to $t_1$ we have
\begin{eqnarray}\label{eq5:8}
f(t_1)-f(t_3)\ge \frac13 f_v(t_2)[t_2-t_1].
\end{eqnarray}
By convexity, 
\begin{eqnarray}\label{eq5:9}
 f_v(t_2)[t_2-t_1]\ge f(t_2)-f(t_1).
\end{eqnarray}
Thus, from \eqref{eq5:8}, \eqref{eq5:9} we obtain
\begin{eqnarray}
f(t_3)\le f(t_1)-\frac13( f(t_2)-f(t_1))=\frac13 (4 f(t_1)-f(t_2)).
\end{eqnarray}
Therefore, by \eqref{eq5:1} we have
\begin{eqnarray}
f(t_3) \le 0,
\end{eqnarray}
which contradicts the strict positivity of $f$. Hence, we must have
\begin{eqnarray}\label{eq5:10}
f_v(t_1)\le \frac23 f_v(t_2),
\end{eqnarray} 
which completes the proof in case I.

We now turn to case II. In this case by monotonicity of $\frac{d^2 f(s)}{ds^2}$
we have
\begin{eqnarray}
f_v(t_2)-f_v(0) \ge \int_0^{t_2} \frac{d^2 f(s)}{ds^2}  ds \ge \int_0^{2t_1} \frac{d^2 f(s)}{ds^2} ds
\ge 2 \int_0^{t_1} \frac{d^2 f(s)}{ds^2} ds= 2( f_v(t_1)-f_v(0)).
\end{eqnarray}
Thus, 
\begin{eqnarray}
f_v(t_2)\ge 2 f_v(t_1) -f_v(0).
\end{eqnarray}
Choosing $t_0$ large enough, so that $f_v(0) \le \frac12 f_v(t_0)$ we obtain \eqref{eq5:10}
which completes the proof.

\end{proof}

Finally, we summarize properties of extremal solutions for exponential nonlinearity.

\begin{lemma} \label{l:exp}
Let $u_{\alpha}(x)$ be an extremal solution for problem \eqref{eq:1} with $f(u)=e^u$.
Then for sufficiently large $\alpha$ we have
\begin{eqnarray}\label{eq2:1}
 \lambda^*(\alpha)=\frac{2\alpha}{\log \alpha}\left(1+O\left(\frac{1}{\sqrt{\log \alpha}}\right)\right),
\end{eqnarray}
\begin{eqnarray}\label{eq2:2}
u^*_{\alpha}(0)=O({\log \log \alpha}),
\end{eqnarray}
\begin{eqnarray}\label{eq2:3}
 \int_B (u^*_{\alpha}(x))^pdx\to 0, \quad 1\le p<\infty,
\end{eqnarray}
\begin{eqnarray}\label{eq2:4}
 \int_B \psi(x) \exp(u^*_{\alpha}(x)) dx\to \int_B \psi(x)dx.
\end{eqnarray}
\end{lemma}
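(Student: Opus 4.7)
The plan is to specialize the general machinery (Theorems \ref{t:1}, \ref{t:2}, \ref{t:3}, and Corollary \ref{c:1}) to the exponential case, where $f(u)=e^u$, $f_v(w)=e^w$, $K=\int_0^\infty e^{-s}\,ds=1$, and $\eps_w=\int_w^\infty e^{-s}\,ds=e^{-w}$. First I would verify that the hypotheses of Theorems \ref{t:1}--\ref{t:3} apply: the integrability condition \eqref{eq:2} is obvious, and the structural hypothesis of Theorem \ref{t:3} is inherited from Lemma \ref{l:proizv} since $f''(u)=e^u$ is positive and strictly increasing on $(0,\infty)$.

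The sharp asymptotic \eqref{eq2:1} is the only nontrivial item. The upper bound $\lambda^*(\alpha)\le \tfrac{2\alpha}{\log\alpha}\bigl(1+\tfrac{c}{\log\alpha}\bigr)$ is given directly by Lemma \ref{l:3} with $K=1$. For the matching lower bound I would use Corollary \ref{c:1}, which specializes to
\[
\lambda^*(\alpha)\ge \frac{2\alpha}{\log\alpha}\left(1-e^{-w}-c\,\frac{e^w+1}{\log\alpha}\right)
\]
for every $w>0$, and then optimize the free parameter $w$. The dominant error terms $e^{-w}$ and $c\,e^w/\log\alpha$ are balanced by the choice $w=\tfrac12\log(\log\alpha/c)$, which renders both of them equal to $O(1/\sqrt{\log\alpha})$. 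Substituting this choice yields
\[
\lambda^*(\alpha)\ge \frac{2\alpha}{\log\alpha}\left(1-O\!\left(\frac{1}{\sqrt{\log\alpha}}\right)\right),
\]
and combining with the upper bound gives \eqref{eq2:1}.

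The pointwise bound \eqref{eq2:2} follows by plugging $f'=e^{\,\cdot\,}$ into Theorem \ref{t:3}: the defining equation $f'(A)=c\log\alpha$ becomes $e^A=c\log\alpha$, so $A=\log(c\log\alpha)=O(\log\log\alpha)$, and \eqref{eqt:3} then reads $u^*_\alpha(0)\le cA=O(\log\log\alpha)$. Estimate \eqref{eq2:3} is exactly \eqref{eqt:5} of Theorem \ref{t:3}, whose hypotheses we have already verified. Finally, \eqref{eq2:4} is the second assertion of \eqref{eq:t222} in Theorem \ref{t:2}, specialized to $f(0)=1$.

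The main obstacle is the sharpness of \eqref{eq2:1}: everything else reduces mechanically to the general theorems, but obtaining the correct $O(1/\sqrt{\log\alpha})$ rate requires the careful balancing of the two error contributions in Corollary \ref{c:1} that is made possible precisely by the self-dual behavior $f_v(w)/\eps_w^{-1}\equiv 1$ of the exponential nonlinearity.
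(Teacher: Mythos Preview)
Your proposal is correct and follows essentially the same route as the paper: Lemma~\ref{l:3} for the upper bound on $\lambda^*$, Corollary~\ref{c:1} with the balancing choice $w\sim\tfrac12\log\log\alpha$ for the lower bound, Theorem~\ref{t:3} for \eqref{eq2:2} and \eqref{eq2:3}, and Theorem~\ref{t:2} for \eqref{eq2:4}.

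One point to flag: the paper treats \eqref{eq2:2} as a \emph{two-sided} estimate and, in addition to the upper bound you obtain from Theorem~\ref{t:3}, it proves the matching lower bound $u^*_\alpha(0)\ge\tfrac12\log\log\alpha-c$. This comes from feeding the sharpened lower bound on $\lambda^*$ (your \eqref{eq2:1}) into estimate \eqref{eq2:28}, which for $f(u)=e^u$ gives $e^{-u^*_\alpha(0)}\le G^*_\alpha(0)\le c/\sqrt{\log\alpha}$. Under the literal reading of $O(\cdot)$ your argument already suffices, but if the intent is a sharp order statement you will want to add this short step.
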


\begin{proof}
We first prove \eqref{eq2:1}. We first observe that
an estimate \eqref{eq:lb} imply that for $f(u)=\exp(u)$
\begin{eqnarray}
\lambda^*(\alpha)\ge \frac{2\alpha}{\log(\alpha)}\left(1-\frac{c}{\log(\alpha)}-c\tilde R(w) \right),
\qquad \tilde R(w)=\exp(-w)+\frac{\exp(w)}{\log(\alpha)}
\end{eqnarray}
with an arbitrary  $w$. It is easy to verify that $\tilde R(w)$ attains its minimum at
\begin{eqnarray}
w=\frac{1}{2}\log (\log(\alpha)),
\end{eqnarray}
 hence
 \begin{eqnarray}
 \min_{w\in (0,\infty)}\tilde R(w)=\frac{2}{\sqrt{\log(\alpha)}}.
\end{eqnarray}
Since \eqref{eq:lb} holds for an arbitrary $w$  we have
\begin{eqnarray}\label{eq2:20}
\lambda^*(\alpha)\ge \frac{2\alpha}{\log(\alpha)}\left( 1- c\left(\frac{1}{\sqrt{\log(\alpha)}}\right)\right).
\end{eqnarray}
On the other hand in the case $f(u)=\exp(u)$ an estimate \eqref{eq:a14a} 
takes a form
\begin{eqnarray}
\lambda^*(\alpha)\le \frac{2\alpha}{\log(\alpha)}\left(1+\frac{c}{\log(\alpha)}\right).
\end{eqnarray}
Combining two estimates above we have \eqref{eq2:1}.

Next let us show that \eqref{eq2:2} holds.
By \eqref{eq1:24}
\begin{eqnarray}
\exp(v_{\alpha}^*(1))\le c\log \alpha,
\end{eqnarray}
and hence
\begin{eqnarray}
v_{\alpha}^*(1) < c \log(\log \alpha).
\end{eqnarray}
Using this observation \eqref{eq1:27} and the fact that $u_{\alpha}^*(0)=v_{\alpha}^*(0),$
we have that
\begin{eqnarray}\label{eq2:21}
u_{\alpha}^*(0) \le c \log(\log \alpha).
\end{eqnarray}
On the other hand we have from \eqref{eq2:20} and \eqref{eq2:28}
\begin{eqnarray}
\exp( -u_{\alpha}^*(0))\le c\frac{1}{\sqrt{\log \alpha}}.
\end{eqnarray}
Hence,
\begin{eqnarray}\label{eq2:22}
u_{\alpha}^*(0)\ge \frac12 \log(\log \alpha)-c.
\end{eqnarray}
Combining, \eqref{eq2:21} and \eqref{eq2:22} we have \eqref{eq2:2}.

Finally \eqref{eq2:3} and \eqref{eq2:4} follow from Theorem \ref{t:3} and Theorem \ref{t:2} respectively.
\end{proof}

\bigskip

\noindent {\bf Acknowledgements.} The work of PVG was supported, in a part,
 by Simons Foundation (Grant 317 882) and by US-Israel Binational Science Foundation (Grant 2012-057). The work of FN was partially supported by NSF (Grant DMS-1600239).
  Part of this research was carried out while VM was visiting Kent State University. 
  VM thanks the Department of Mathematical Sciences for its support and hospitality.

\begin{bibdiv} 
\begin{biblist}
\bib{Abram}{book}{
author={Abramovich, G N. },
title={The theory of turbulent jets},
publisher={The MIT Press},
date={2003},
}

\bib{sasha}{article}{
   author={Berestycki, H.},
   author={Kiselev, A.},
   author={Novikov, A.},
   author={Ryzhik, L.},
   title={The explosion problem in a flow},
   journal={J. Anal. Math.},
   volume={110},
   date={2010},
   pages={31--65},
}

\bib{Brezis96}{article}{
   author={Brezis, Ha\"{i}m},
   author={Cazenave, Thierry},
   author={Martel, Yvan},
   author={Ramiandrisoa, Arthur},
   title={Blow up for $u_t-\Delta u=g(u)$ revisited},
   journal={Adv. Differential Equations},
   volume={1},
   date={1996},
   number={1},
   pages={73--90},
  }

\bib{Cabre06}{article}{
   author={Cabr\'{e}, Xavier},
   author={Capella, Antonio},
   title={Regularity of radial minimizers and extremal solutions of
   semilinear elliptic equations},
   journal={J. Funct. Anal.},
   volume={238},
   date={2006},
   number={2},
   pages={709--733},
   
}

\bib{CR75}{article}{
   author={Crandall, M.G.},
   author={Rabinowitz, P. H.},
   title={Some continuation and variational methods for positive solutions
   of nonlinear elliptic eigenvalue problems},
   journal={Arch. Rational Mech. Anal.},
   volume={58},
   date={1975},
   number={3},
   pages={207--218},
}

\bib{stable}{book}{
   author={Dupaigne, Louis},
   title={Stable solutions of elliptic partial differential equations},
   series={Chapman \& Hall/CRC Monographs and Surveys in Pure and Applied
   Mathematics},
   volume={143},
   publisher={Chapman \& Hall/CRC, Boca Raton, FL},
   date={2011},
   }

\bib{Evans}{book}{
   author={Evans, Lawrence C.},
   title={Partial differential equations},
   series={Graduate Studies in Mathematics},
   volume={19},
   edition={2},
   publisher={American Mathematical Society, Providence, RI},
   date={2010},
   }

\bib{FK}{book} { AUTHOR = {Frank-Kamenetskii,D. A. }, TITLE
= {Diffusion and heat transfer in chemical kinetics}, PUBLISHER
= { Plenum Press}, ADDRESS = {New York}, YEAR = {1969},

}

\bib{Gelfand}{article}{
    AUTHOR = {Gelfand, I. M.},
     TITLE = {Some problems in the theory of quasilinear equations},
   JOURNAL = {Amer. Math. Soc. Transl. (2)},
 VOLUME = {29},
    YEAR = {1963},
     PAGES = {295--381},
}

\bib{GT}{book}{
   author={Gilbarg, David},
   author={Trudinger, Neil S.},
   title={Elliptic partial differential equations of second order},
   series={Classics in Mathematics},
   note={Reprint of the 1998 edition},
   publisher={Springer-Verlag, Berlin},
   date={2001},
}

\bib{GHH18}{article}{
   author={Gordon, Peter V.},
   author={Hegde, Uday G.},
   author={Hicks, Michael C.},
   title={An elementary model for autoignition of free round turbulent jets},
   journal={SIAM J. Appl. Math.},
   volume={78},
   date={2018},
   number={2},
   pages={705--718},
}

\bib{KK74}{article}{
   author={Keener, J. P.},
   author={Keller, H. B.},
   title={Positive solutions of convex nonlinear eigenvalue problems},
   journal={J. Differential Equations},
   volume={16},
   date={1974},
   pages={103--125},
   }
   
   \bib{keller67}{article}{
   author={Keller, Herbert B.},
   author={Cohen, Donald S.},
   title={Some positone problems suggested by nonlinear heat generation},
   journal={J. Math. Mech.},
   volume={16},
   date={1967},
   pages={1361--1376},
}
   
   \bib{nedev}{article}{
   author={Nedev, Gueorgui},
   title={Regularity of the extremal solution of semilinear elliptic
   equations},
   journal={C. R. Acad. Sci. Paris S\'{e}r. I Math.},
   volume={330},
   date={2000},
   number={11},
   pages={997--1002},
   }

\bib{PW}{book}{
   author={Protter, Murray H.},
   author={Weinberger, Hans F.},
   title={Maximum principles in differential equations},
   publisher={Springer-Verlag, New York},
   date={1984},
   }

\bib{Shl}{book}{
author={Schlichting, H.},
title={Boundary Layer Theory },
publisher={McGraw-Hill; 6 Rev Ed edition},
date={1968},
}

\bib{Sem}{article}{
author={Semenov, N.N.},
title={Thermal theory of combustion and explosion},
Journal={Physics-Uspekhi},
Volume={23},
Issue={3},
year={1940},
pages={251-292},
}

\bib{Satt71}{article}{
   author={Sattinger, D. H.},
   title={Monotone methods in nonlinear elliptic and parabolic boundary
   value problems},
   journal={Indiana Univ. Math. J.},
   volume={21},
   date={1971/72},
   pages={979--1000},
      
}

\bib{ZBLM}{book} { AUTHOR = {Zeldovich, Ya. B.}, AUTHOR =
{ Barenblatt, G. I.}, AUTHOR = { Librovich, V. B. }, AUTHOR =
{ Makhviladze, G. M.}, TITLE = {The mathematical theory of combustion
and explosions}, PUBLISHER = {Consultants Bureau {[}Plenum{]}}
ADDRESS = {New York}, YEAR = {1985}, 
}

\end{biblist}

\end{bibdiv}

\end{document}